\newtheorem{theorem}{Theorem}
\newtheorem{proposition}[theorem]{Proposition}
\newtheorem{corollary}[theorem]{Corollary}
\newtheorem {remark}[theorem]{Remark}
\title[]{Bifurcation of Limit Cycles from a Periodic Annulus Formed by a Center and Two Saddles in Piecewise Linear Differential System with Three Zones}
\author[C. Pessoa and R. Ribeiro]{}
  \subjclass[2021]{34C07}
   \keywords{Limit Cycles; Piecewise Hamiltonian differential system; Melnikov function; Periodic Annulus}
\begin{document}
 \maketitle

\centerline{\scshape  Claudio Pessoa and Ronisio Ribeiro}
\medskip

{\footnotesize \centerline{Universidade Estadual Paulista (UNESP),} \centerline{Instituto de Bioci\^encias Letras e Ci\^encias Exatas,} \centerline{R. Cristov\~ao Colombo, 2265, 15.054-000, S. J. Rio Preto, SP, Brazil }
\centerline{\email{c.pessoa@unesp.br} and \email{ronisio.ribeiro@unesp.br}}}

\medskip

\bigskip

\begin{quote}{\normalfont\fontsize{8}{10}\selectfont
{\bfseries Abstract.} In this paper, we study the number of limit cycles that can bifurcate from a periodic annulus in discontinuous planar piecewise linear Hamiltonian differential system with three zones separated by two parallel straight lines, such that the linear differential systems that define the piecewise one have a center and two saddles. That is, the linear differential system in the region between the two parallel lines (i.e. the central subsystem) has a center and the others subsystems have saddles. We prove that if the central subsystem has a real or a boundary center, then we have at least six limit cycles bifurcating from the periodic annulus by linear perturbations, four passing through the three zones and two passing through the two zones. Now, if the central subsystem has a virtual center, then we have at least four limit cycles 
bifurcating from the periodic annulus by linear perturbations, three passing through the three zones and one passing through the two zones. For this, we obtain a normal form for these piecewise Hamiltonian systems and study the number of zeros of its Melnikov functions defined in two and three zones.

\par}
\end{quote}

\section{Introduction and Main Results}
The problem of determine the number and position of limit cycles of polynomial differential systems was proposed by Hilber in 1900 as part of Hilbert’s 16th problem, see \cite{Hil02}. Currently this problem has been considered for piecewise differential systems. This class of differential systems have piqued the attention of researchers in qualitative theory of differential equations, mainly because many phenomena can be described by these class of models, for instance in mechanics, electrical circuits, control theory, neurobiology, etc (see the book \cite{diB08} and the papers \cite{Chu90, Fit61, McK70, Nag62}).

For continuous planar piecewise differential systems with two zones, was proved that such systems have at most one limit cycle (see \cite{Fre98}). In recent years, numerous studies about discontinuous piecewise linear differential systems have arisen in the literature.  The case with two zones separated by one straight line is the simplest and most explored, however, the maximum number of limit cycles is not known, but important partial results have been obtained, see for instance \cite{Bra13, Buz13, Fre12, Fre14b, Li14, Lli12, LNT, Wan19}. There are few works when the piecewise system is defined in more than two zones, see for instance \cite{Don17, Hu13, Lli15b, Wan16, Yan20}. Most results involving lower bounds for the number of the limit cycles are obtained by imposing restrictive hypotheses on the systems, such as symmetry and linearity. For instance, in \cite{Lli14}, conditions for nonexistence and existence of one, two or three limit cycles have been obtained, for symmetric continuous piecewise linear differential systems with three zones. Examples with two limit cycles surrounding a unique singular point at the origin was found in \cite{Lli15, Lim17}, for the nonsymmetric case. The best lower bound for the number of limit cycles from a discontinuous piecewise linear differential system with three zones separated by two parallel straight lines, without these restrictions, is seven. This lower bound was obtained by linear perturbations of a piecewise linear differential system with subsystems without singular points and a boundary pseudo--focus, see \cite{Xio21}. As far as we know, all other papers that estimate the number of limit cycles for these class of piecewise linear differential systems have found at most 1 or 3 limit cycles, see \cite{Fon20, Li21, Lli18a, Pes22a, Xio20}. The search for upper bound for the number of limit cycles that a piecewise linear system with three zones can have is what motivates most of the recently works found in the literature about this topic. However, all cases are interesting in themselves, that is, the search for better quotas for number of limit cycles can not be used to discourage the study of particular families. So the type of singular points of the subsystems and their positions, that is, whether they are real, virtual or boundary, are important questions in this study and should be considered for all subclass of piecewise linear systems with three zones. 

In this paper, we contribute along these lines. Our goal is estimated the lower bounds for the number of crossing limit cycles of a discontinuous piecewise linear near-Hamiltonian differential systems with three zones, given by

\begin{equation}\label{eq:01}
	\left\{\begin{array}{ll}
		\dot{x}= H_y(x,y)+\epsilon f(x,y), \\
		\dot{y}= -H_x(x,y)+\epsilon g(x,y),
	\end{array}
	\right.
\end{equation}
with
\begin{equation*}
	H(x,y)=\left\{\begin{array}{ll}\vspace{0.2cm}
		H^{\scriptscriptstyle L}(x,y)=\dfrac{b_{\scriptscriptstyle L}}{2}y^2-\dfrac{c_{\scriptscriptstyle L}}{2}x^2+a_{\scriptscriptstyle L}xy+\alpha_{\scriptscriptstyle L}y-\beta_{\scriptscriptstyle L}x, \quad x\leq -1, \\ \vspace{0.2cm}
		H^{\scriptscriptstyle C}(x,y)=\dfrac{b_{\scriptscriptstyle C}}{2}y^2-\dfrac{c_{\scriptscriptstyle C}}{2}x^2+a_{\scriptscriptstyle C}xy+\alpha_{\scriptscriptstyle C}y-\beta_{\scriptscriptstyle C}x, \quad -1\leq x\leq 1, \\
		H^{\scriptscriptstyle R}(x,y)=\dfrac{b_{\scriptscriptstyle R}}{2}y^2-\dfrac{c_{\scriptscriptstyle R}}{2}x^2+a_{\scriptscriptstyle R}xy+\alpha_{\scriptscriptstyle R}y-\beta_{\scriptscriptstyle R}x, \quad x \geq 1, \\
	\end{array}
	\right.
\end{equation*}
\begin{equation}\label{eq:02}
	f(x,y)=\left\{\begin{array}{ll}
		f_{\scriptscriptstyle L}(x,y)=r_{10}x+r_{01}y+r_{00}, \quad x\leq -1, \\
		f_{\scriptscriptstyle C}(x,y)=u_{10}x+u_{01}y+u_{00}, \quad -1\leq x\leq 1, \\
		f_{\scriptscriptstyle R}(x,y)=p_{10}x+p_{01}y+p_{00}, \quad x \geq 1, \\
	\end{array}
	\right.
\end{equation}
\begin{equation}\label{eq:03}
	g(x,y)=\left\{\begin{array}{ll}
		g_{\scriptscriptstyle L}(x,y)=s_{10}x+s_{01}y+s_{00}, \quad x\leq -1, \\
		g_{\scriptscriptstyle C}(x,y)=v_{10}x+v_{01}y+v_{00}, \quad -1\leq x\leq 1, \\
		g_{\scriptscriptstyle R}(x,y)=q_{10}x+q_{01}y+q_{00}, \quad x \geq 1, \\
	\end{array}
	\right.
\end{equation}
where the dot denotes the derivative with respect to the independent variable $t$, here called the time, and $0\leq\epsilon<<1$.  When $\epsilon=0$ we say that system \eqref{eq:01} is a piecewise Hamiltonian differential system. We call system \eqref{eq:01} of {\it left subsystem} when $x\leq -1$, {\it right subsystem} when $x\geq 1$ and {\it central subsystem} when $-1\leq x\leq 1$. Moreover, we can classify a singular point $p$ of a subsystem from \eqref{eq:01} according to its position in relation to the subsystem. More precisely, the central subsystem from \eqref{eq:01} has a {\it real singular point} $p=(p_x,p_y)$ when $-1<p_x<1$, a {\it virtual singular point} when $p_x>1$ or $p_x<-1$ and a {\it boundary singular point} when $p_x=\pm1$. Accordingly, the left (resp. right) subsystem from \eqref{eq:01} has a real singular point $p=(p_x,p_y)$ when $p_x<-1$ (resp. when $p_x>1$ ), a virtual singular point when $p_x>-1$ (resp. when $p_x<1$) and a boundary singular point when $p_x=-1$ (resp. when $p_x=1$).

\medskip

In addition, let us assume that system $\eqref{eq:01}|_{\epsilon=0}$ satisfies the following hypotheses:
\begin{itemize}
	\item[{\rm (H1)}] The unperturbed central subsystem from $\eqref{eq:01}|_{\epsilon=0}$ has a center and the others unperturbed subsystems from $\eqref{eq:01}|_{\epsilon=0}$ have saddles.
	\item[{\rm (H2)}] The unperturbed system  $\eqref{eq:01}|_{\epsilon=0}$ has only crossing points on the straights lines $x=\pm 1$, except by some tangent points.
	\item[{\rm (H3)}] The unperturbed system $\eqref{eq:01}|_{\epsilon=0}$ has a periodic annulus consisting of a family of crossing periodic orbits such that each orbit of this family passes thought the three or two zones with clockwise orientation.  	
\end{itemize}

Thus, the main result of this paper is the follow.

\begin{theorem}\label{the:01}
	The number of limit cycles of system \eqref{eq:01}, satisfying hypotheses {\rm (Hi)} for $i=1,2,3$, which can bifurcate from the periodic annulus of the unperturbed system $\eqref{eq:01}|_{\epsilon=0}$ is either at least six if the central subsystem has a real/boundary center or at least four if the central subsystem has a virtual center.
\end{theorem}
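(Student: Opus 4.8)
The plan is to reduce the theorem to counting zeros of two explicit first‑order Melnikov functions — one attached to the subfamily of periodic orbits crossing all three zones, the other to the subfamily crossing only two zones — and then to exhibit linear perturbations realizing the claimed numbers of simple zeros. As a preliminary step (the normal‑form lemma that should precede the proof) I would act on the three local Hamiltonians by the affine changes of coordinates that preserve the switching lines $x=\pm1$: translations in $y$, rescalings of $y$ and of time, and shears $(x,y)\mapsto(x,y+kx)$. Using (H1) — a center in the central zone, saddles in the outer zones — these transformations normalize $H^{C}$ so that the center lies on the $x$‑axis at a point $(x_0,0)$, with $x_0\in(-1,1)$, $x_0=\pm1$, or $|x_0|>1$ corresponding respectively to the real, boundary and virtual cases, and simultaneously simplify $H^{L}$ and $H^{R}$, each saddle then carrying one eigenvalue parameter and one equilibrium‑position parameter. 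Hypotheses (H2)–(H3) become open conditions guaranteeing the crossing configuration on $x=\pm1$ and the existence of the periodic annulus.

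Next I would write the Melnikov functions. For the piecewise Hamiltonian system $\eqref{eq:01}|_{\epsilon=0}$, along a crossing periodic orbit the displacement after one return is, to first order in $\epsilon$, a sum over the zones of the line integrals of $g\,dx-f\,dy$ along the corresponding orbit arcs, weighted by ratios of the functions $H_y$ evaluated at the successive crossing points on $x=\pm1$; the weights appear because the three local Hamiltonians are defined only up to independent normalizations. Parametrizing the three‑zone orbits by the energy $h$ of their central arc on an interval $(h_1,h_2)$, and the two‑zone orbits on an adjacent interval, this yields functions $M_3(h)$ and $M_2(h)$ whose simple zeros are in bijection with the limit cycles bifurcating from the respective subfamilies; since the two subfamilies are disjoint, the associated limit cycles are distinct.

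Then I would evaluate the integrals explicitly. Over the elliptic arc in the central zone a linear one‑form integrates to an algebraic function of $h$ (a polynomial plus a square root of a quadratic in $h$, plus an $\arcsin$‑type term that is constant on the relevant arc because its endpoints lie on $x=\pm1$); over each hyperbolic arc in a saddle zone it integrates to an algebraic part plus a logarithmic term in $h$. Collecting terms, $M_3$ and $M_2$ become linear combinations of an explicit finite list of generating functions whose coefficients depend linearly on the perturbation parameters $r_{ij},u_{ij},p_{ij},s_{ij},v_{ij},q_{ij}$. I would then compute the coefficient matrix, check its rank to see how many coefficients are genuinely free, and choose values making $M_3$ have $4$ (resp. $3$) and $M_2$ have $2$ (resp. $1$) simple zeros in the real/boundary (resp. virtual) case, giving the lower bounds $6$ and $4$.

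The main obstacle is this last step: the generating functions are transcendental (mixed algebraic–logarithmic), so the attainable number of simple zeros is not settled by counting monomials. I expect to handle it by combining (i) linear independence of the generating functions, checked via a Wronskian or via asymptotic expansions at the endpoints $h_1,h_2$ and at the value of $h$ where a three‑zone orbit degenerates into a two‑zone one, with (ii) explicit numerical choices of the free coefficients and location of sign changes of $M_3$ and $M_2$ by the intermediate value theorem. Additional care is needed to verify that the crossing‑point weights do not vanish on the annulus and that in the virtual‑center case the effective list of generating functions really loses one degree of freedom in each family, which is what accounts for the drop from $6$ to $4$.
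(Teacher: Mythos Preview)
Your outline is broadly on target through the normal form and the explicit computation of the two Melnikov functions, but the decisive step is missing. The issue is not to produce zeros of $M_3$ and $M_2$ separately; it is to produce them \emph{simultaneously} with a single choice of the perturbation parameters. A Wronskian or linear-independence argument for the generating functions of each Melnikov function individually does not address this, and ``explicit numerical choices'' begs the question of why such choices exist.

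The mechanism the paper uses, and which your plan does not identify, is the following. Both Melnikov functions are expanded at the \emph{same} value $h_0=2\sqrt{\beta_C}$, the level corresponding to the orbit $L_0^1$ that separates the two-zone from the three-zone family. Because the crossing points on $x=-1$ are $(-1,\pm\sqrt{h^2-4\beta_C})$, the three-zone function $M_0$ has a Puiseux expansion in $(h-h_0)^{1/2}$, i.e.\ with coefficients $C_0,D_1,C_1,D_3,C_2,\dots$ attached to $1,(h-h_0)^{1/2},(h-h_0),(h-h_0)^{3/2},(h-h_0)^2,\dots$, whereas the two-zone function $M_1$ has an ordinary Taylor expansion with coefficients $C_0',C_1',C_2',\dots$. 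The crucial computation is that when $0<\beta_C\le1$ one has $C_j=C_j'$ for all $j$ (this is the identity $\Phi(\beta_C)\equiv0$ in the paper), so the \emph{same} integer-power coefficients govern both expansions. A general lemma (Proposition~\ref{the:coef}) then shows that if $C_0,D_1,C_1,D_3,C_2$ can be moved independently, one obtains at least $4$ zeros of $M_0$ (to the right of $h_0$) and $2$ zeros of $M_1$ (to the left of $h_0$) for the same perturbation. In the virtual case $\beta_C>1$ the identity $C_j=C_j'$ fails unless one imposes $u_{10}+v_{01}=0$, which costs one parameter and yields $3+1=4$; this is the actual reason for the drop, not a loss of generating functions.

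Two smaller points: the $\arccos$ terms coming from the central arcs are \emph{not} constant in $h$ (they appear as $f_0^C(h)$ and $f_1^C(h)$ in the paper), so your remark that the inverse-trig contribution is constant is incorrect; and the crossing-point weights in the Melnikov formula reduce to constants ($b_R$, $b_R/b_L$, $1$) only after the normal form, which is worth noting since it simplifies the computation considerably.
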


The paper is organized as follows. In Section \ref{sec:NF}, we obtain a normal form to system $\eqref{eq:01}|_{\epsilon=0}$ that simplifies the compute. In order to prove the Theorem \ref{the:01}, we will study the number of zeros of the first order Melnikov function associated to system \eqref{eq:01} in Section \ref{sec:mel} (see also \cite{ Xio20, Xio21} for more details about the Melnikov function). Our study is concentrated in the neighborhood of the periodic orbit that simultaneously bounded the periodic annulus from the two and three zones, since to estimate the zeros of the Melnikov function we consider its expansion at the point corresponding to this orbit. What distinguishes our approach apart from others is that we provide a detailed analytical method to study the number of simultaneous zeros from Melnikov functions defined in two and three zones (see Proposition \ref{the:coef}). Finally, in Section \ref{sec:Teo} we will prove Theorem \ref{the:01}.

%%%%%%%%%%%%%%%%%%%%%%%%%%%%%%%%%%%%%%%%%%%%%%%%%%%%%%%%%%%%%%%%%%%%%%%%%%%%%%%%%%%%%%%%%%%%

\section{Normal Form}\label{sec:NF}
In order to decrease the number of parameters of system $\eqref{eq:01}|_{\epsilon=0}$, we will do a continuous linear change of variables that keeps invariant the straight lines $x=\pm 1$. This change of variables is a homeomorphism and will be a topological equivalence between the systems. More precisely, we have the follow result.

\begin{proposition}\label{fn:01}
	Suppose that the central subsystem from $\eqref{eq:01}|_{\epsilon=0}$ has a center and the other two subsystems have two saddles. Then, after a linear change of variables and a rescaling of the independent variable, we can assume that $\alpha_{\scriptscriptstyle L}=a_{\scriptscriptstyle L}$, $\alpha_{\scriptscriptstyle R}=-a_{\scriptscriptstyle R}$, $b_{\scriptscriptstyle C}=1$, $c_{\scriptscriptstyle C}=-1$ and $a_{\scriptscriptstyle C}=\alpha_{\scriptscriptstyle C}=0$.
\end{proposition}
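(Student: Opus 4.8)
The plan is to normalize the central subsystem first, since it carries a center, and then adjust the left and right subsystems. First I would use the hypothesis that the central subsystem $\dot x = H^{\scriptscriptstyle C}_y,\ \dot y = -H^{\scriptscriptstyle C}_x$ has a center: the linear part has matrix with trace $a_{\scriptscriptstyle C}-a_{\scriptscriptstyle C}=0$ (automatically, since the Hamiltonian structure forces zero divergence) and determinant $b_{\scriptscriptstyle C}c_{\scriptscriptstyle C}+a_{\scriptscriptstyle C}^2$, so a center requires $b_{\scriptscriptstyle C}c_{\scriptscriptstyle C}+a_{\scriptscriptstyle C}^2 < 0$; in particular $b_{\scriptscriptstyle C}\neq 0$. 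The idea is to look for a linear change of variables $(x,y)\mapsto (x, \lambda x + \mu y)$ — note this is forced to be of this triangular form if we are to keep the lines $x=\pm 1$ invariant — together with a rescaling of time, that puts $H^{\scriptscriptstyle C}$ into the form $\tfrac12 y^2 + \tfrac12 x^2$, i.e. $b_{\scriptscriptstyle C}=1$, $c_{\scriptscriptstyle C}=-1$, $a_{\scriptscriptstyle C}=0$. Completing the square in $H^{\scriptscriptstyle C}$ with respect to $y$ is exactly what the substitution $\tilde y = \lambda x + \mu y$ achieves: choosing $\mu$ to fix the $y^2$-coefficient and $\lambda$ to kill the cross term $xy$, and then a time rescaling to fix the sign/size of the $x^2$-coefficient. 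The linear (non-homogeneous) terms $\alpha_{\scriptscriptstyle C}y - \beta_{\scriptscriptstyle C}x$ correspond to a translation of the center to a chosen point; since the change of variables must be linear (not affine) to preserve $x=\pm1$ as sets setwise but we are still free to translate along... actually here one must be careful: a translation in $x$ would move the lines $x=\pm1$, so instead one uses the translation only in the $y$-direction, which is a genuine linear conjugacy after absorbing it, to arrange $\alpha_{\scriptscriptstyle C}=0$; then $\beta_{\scriptscriptstyle C}=0$ follows because the center of the normalized system must lie on $x=0$ for the periodic annulus picture, or more directly because once $a_{\scriptscriptstyle C}=0,\ b_{\scriptscriptstyle C}=1,\ c_{\scriptscriptstyle C}=-1$ the remaining constant can be removed by the $y$-translation. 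So after this first stage $H^{\scriptscriptstyle C}=\tfrac12 y^2 + \tfrac12 x^2$, which is the claim $b_{\scriptscriptstyle C}=1$, $c_{\scriptscriptstyle C}=-1$, $a_{\scriptscriptstyle C}=\alpha_{\scriptscriptstyle C}=0$.

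Next I would turn to the left subsystem. The change of variables has already been spent (it is a single global linear map), so the only remaining freedom is whatever linear maps fix both lines $x=\pm 1$ AND act trivially on the central block — but a linear map fixing $x=\pm1$ is of the form $(x,y)\mapsto(x,\lambda x+\mu y)$ and to keep $H^{\scriptscriptstyle C}$ in normal form we would need $\lambda=0,\ \mu=1$, i.e. the identity. Hence the conditions $\alpha_{\scriptscriptstyle L}=a_{\scriptscriptstyle L}$ and $\alpha_{\scriptscriptstyle R}=-a_{\scriptscriptstyle R}$ cannot come from further changes of variables; instead they must be a consequence of hypothesis (H2), the crossing condition. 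The point is that the flows of the left and central subsystems must agree (be continuous) on the line $x=-1$ as vector fields up to the crossing requirement: along $x=-1$ the central field is $\dot x = H^{\scriptscriptstyle C}_y(-1,y)= y$, $\dot y = -H^{\scriptscriptstyle C}_x(-1,y) = 1$... no — rather, the condition for the two subsystems to glue into a piecewise \emph{Hamiltonian} system with only crossing points, together with the way the normalization was set up, should force the first-component of the left field on $x=-1$ to match that of the central field. Concretely $\dot x|_{x=-1}$ from the central subsystem equals $b_{\scriptscriptstyle C}y+a_{\scriptscriptstyle C}(-1)+\alpha_{\scriptscriptstyle C}= y$, and from the left subsystem $\dot x|_{x=-1} = b_{\scriptscriptstyle L}y - a_{\scriptscriptstyle L}+\alpha_{\scriptscriptstyle L}$; so matching... gives $b_{\scriptscriptstyle L}=1$ and $\alpha_{\scriptscriptstyle L}=a_{\scriptscriptstyle L}$. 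But wait, we have not yet used up freedom to rescale $H^{\scriptscriptstyle L}$ itself: adding a constant to $H^{\scriptscriptstyle L}$ and multiplying by a positive scalar do not change the vector field's orbits but do change the Hamiltonian, and the Hamiltonian is only defined up to such operations piecewise — so the honest statement is that we may \emph{choose} $H^{\scriptscriptstyle L}$ so that $\dot x$ matches on $x=-1$, whence $\alpha_{\scriptscriptstyle L}=a_{\scriptscriptstyle L}$ after also using the freedom in the additive constant $\beta_{\scriptscriptstyle L}$ — actually $\beta_{\scriptscriptstyle L}$ affects $\dot y$, not $\dot x$, so this needs one more thought, but the mechanism is: the continuity of $\dot x$ across $x=-1$ (which must hold because otherwise there would be a segment of non-crossing behavior, contradicting (H2)) plus the normalization of $H^{\scriptscriptstyle C}$ pins down $\alpha_{\scriptscriptstyle L}=a_{\scriptscriptstyle L}$. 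The right subsystem is symmetric: continuity of $\dot x$ across $x=1$ gives $\dot x|_{x=1}$ from the center equal to $b_{\scriptscriptstyle C}y + a_{\scriptscriptstyle C} + \alpha_{\scriptscriptstyle C} = y$ and from the right $b_{\scriptscriptstyle R}y + a_{\scriptscriptstyle R} + \alpha_{\scriptscriptstyle R}$, giving $\alpha_{\scriptscriptstyle R}=-a_{\scriptscriptstyle R}$.

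The main obstacle, I expect, is being careful about exactly which residual symmetries are available and in which order to spend them: the single global linear change of coordinates must be used to normalize the central block (five conditions, but only genuinely a two- or three-parameter family of admissible maps, so the count has to be checked — one uses that the defining equations for a center make the system degenerate in just the right way), and then the conditions on the outer subsystems must be extracted not from further coordinate changes but from hypothesis (H2) combined with the gauge freedom in each piecewise Hamiltonian (additive constant + positive multiplicative constant on each of $H^{\scriptscriptstyle L}$, $H^{\scriptscriptstyle R}$). Making precise why (H2) forces $\dot x$ to be continuous across $x=\pm1$ — i.e. that a jump in the first component along a whole segment would create a sliding/escaping region or tangency interval violating the "only crossing points except finitely many tangencies" clause — is the delicate point, and I would state it as a short lemma. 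Everything else is completing the square and bookkeeping of coefficients, which I would present compactly rather than in full.
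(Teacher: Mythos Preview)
Your overall architecture matches the paper's: use (H2) to pin down relations among the outer coefficients, and use a single triangular change $(x,y)\mapsto(x,\lambda x+\mu y+\nu)$ together with a time rescaling to normalize the central block to $b_{\scriptscriptstyle C}=1$, $c_{\scriptscriptstyle C}=-1$, $a_{\scriptscriptstyle C}=\alpha_{\scriptscriptstyle C}=0$. The paper actually derives the (H2) relations \emph{first} and then changes variables, but the order is immaterial.

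There are, however, two genuine gaps.

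\textbf{First, (H2) does not force continuity of $\dot x$ across $x=\pm1$.} After you normalize the center, on $x=-1$ you have $\dot x_{\scriptscriptstyle C}=y$ and $\dot x_{\scriptscriptstyle L}=b_{\scriptscriptstyle L}y-a_{\scriptscriptstyle L}+\alpha_{\scriptscriptstyle L}$. Hypothesis (H2) only says these have the same sign for all $y$ except at isolated tangencies, i.e.\ $y\,(b_{\scriptscriptstyle L}y-a_{\scriptscriptstyle L}+\alpha_{\scriptscriptstyle L})>0$ for $y$ off a finite set. That quadratic in $y$ has roots $0$ and $(a_{\scriptscriptstyle L}-\alpha_{\scriptscriptstyle L})/b_{\scriptscriptstyle L}$; positivity away from the roots forces the roots to coincide and the leading coefficient to be positive. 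Hence $\alpha_{\scriptscriptstyle L}=a_{\scriptscriptstyle L}$ and $b_{\scriptscriptstyle L}>0$, but \emph{not} $b_{\scriptscriptstyle L}=1$. Your attempt to recover $b_{\scriptscriptstyle L}=1$ by ``gauge freedom'' (multiplying $H^{\scriptscriptstyle L}$ by a positive scalar) is not legitimate here: that rescales the left vector field itself, which is neither a global linear change of variables nor a global time rescaling, the only moves the proposition allows. Indeed $b_{\scriptscriptstyle L}$ and $b_{\scriptscriptstyle R}$ survive as free positive parameters in the normal form and reappear throughout the Melnikov computation. The correct lemma is exactly ``tangent points on each switching line must coincide'', which is what the paper proves.

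\textbf{Second, $\beta_{\scriptscriptstyle C}=0$ does not follow and is not claimed.} The $y$-translation you (correctly) use kills $\alpha_{\scriptscriptstyle C}$, but $\beta_{\scriptscriptstyle C}$ is the $x$-coordinate of the center after normalization, and you cannot translate in $x$ without moving the lines $x=\pm1$. The proposition leaves $\beta_{\scriptscriptstyle C}$ free; the whole paper then splits into the cases $\beta_{\scriptscriptstyle C}=0$ (handled elsewhere) and $\beta_{\scriptscriptstyle C}>0$.

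Fix these two points---replace ``continuity of $\dot x$'' by ``coincidence of tangent points and positivity of $b_{\scriptscriptstyle L},b_{\scriptscriptstyle R}$'', and drop the claim about $\beta_{\scriptscriptstyle C}$---and your argument is essentially the paper's.
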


\begin{proof}
	As the central subsystem from $\eqref{eq:01}|_{\epsilon=0}$ has a center with clockwise orientation of the orbits, then $a_{\scriptscriptstyle C}^2+b_{\scriptscriptstyle C}c_{\scriptscriptstyle C}<0$ and $b_{\scriptscriptstyle C}>0$. Moreover, $b_i\ne 0$, for $i=L,R$. In fact, if $b_i=0$ then the saddle of right or left subsystem have a separatrix parallel to straight line $x=0$. Note that, system $\eqref{eq:01}|_{\epsilon=0}$ has four tangent points given by $P_1=(1,-(a_{\scriptscriptstyle C}+\alpha_{\scriptscriptstyle C})/b_{\scriptscriptstyle C})$, $P_2=(1,-(a_{\scriptscriptstyle R}+\alpha_{\scriptscriptstyle R})/b_{\scriptscriptstyle R})$, $P_3=(-1,(a_{\scriptscriptstyle C}-\alpha_{\scriptscriptstyle C})/b_{\scriptscriptstyle C})$ and $P_4=(-1,(a_{\scriptscriptstyle L}-\alpha_{\scriptscriptstyle L})/b_{\scriptscriptstyle L})$. By hypothesis (H2), we have that the system $\eqref{eq:01}|_{\epsilon=0}$ have only crossing points on the straight lines $x=\pm 1$, except in the tangent points. Hence, for all $y\in\mathbb{R}\setminus\{(\pm a_{\scriptscriptstyle C}-\alpha_{\scriptscriptstyle C})/b_{\scriptscriptstyle C},-(a_{\scriptscriptstyle R}+\alpha_{\scriptscriptstyle R})/b_{\scriptscriptstyle R}),(a_{\scriptscriptstyle L}-\alpha_{\scriptscriptstyle L})/b_{\scriptscriptstyle L})\}$, we must have
	$$\left\langle  X_{\scriptscriptstyle L}(-1,y),(1,0)\right\rangle \left\langle  X_{\scriptscriptstyle C}(-1,y),(1,0)\right\rangle>0\quad\text{and}\quad\left\langle  X_{\scriptscriptstyle R}(1,y),(1,0)\right\rangle \left\langle  X_{\scriptscriptstyle C}(1,y),(1,0)\right\rangle>0.$$ 
	But this implies that $b_{\scriptscriptstyle L}b_{\scriptscriptstyle C}>0$, $b_{\scriptscriptstyle R}b_{\scriptscriptstyle C}>0$, $P_1=P_2$ and $P_3=P_4$. Therefore, as $b_{\scriptscriptstyle C}>0$, we have that
	\begin{equation}\label{ch:01}
		\alpha_{\scriptscriptstyle L}=\frac{a_{\scriptscriptstyle L}b_{\scriptscriptstyle C}+b_{\scriptscriptstyle L}(\alpha_{\scriptscriptstyle C}-a_{\scriptscriptstyle C})}{b_{\scriptscriptstyle C}},\quad b_{\scriptscriptstyle L}>0,\quad \alpha_{\scriptscriptstyle R}=\frac{-a_{\scriptscriptstyle R}b_{\scriptscriptstyle C}+b_{\scriptscriptstyle R}(a_{\scriptscriptstyle C}+\alpha_{\scriptscriptstyle C})}{b_{\scriptscriptstyle C}}\quad\text{and}\quad b_{\scriptscriptstyle R}>0.
	\end{equation}
	Assuming the conditions \eqref{ch:01}, consider the change of variables  
	\begin{displaymath}
		\left(\begin{array}{c}
			u\\
			v
		\end{array}\right)=\left(\begin{array}{cc}
			1 & 0\vspace{0.2cm}\\
			\dfrac{a_{\scriptscriptstyle C}}{\omega_{\scriptscriptstyle C}} &\,\,\,\,\, \dfrac{b_{\scriptscriptstyle C}}{\omega_{\scriptscriptstyle C}}
		\end{array}\right)\left(\begin{array}{c}
			x\\
			y
		\end{array}\right)+\left(\begin{array}{c}
			0\vspace{0.2cm}\\
			\dfrac{\alpha_{\scriptscriptstyle C}}{\omega_{\scriptscriptstyle C}}
		\end{array}\right),
	\end{displaymath}
	with $\omega_{\scriptscriptstyle C}=\sqrt{-a^2_{\scriptscriptstyle C} - b_{\scriptscriptstyle C}c_{\scriptscriptstyle C}}$. Applying this change of variables and rescaling the time by $\tilde{t}=\omega_{\scriptscriptstyle C}\,t$, we obtain the results after rewriting the parameters.	
\end{proof}

\begin{remark}\label{rem:beta}
	Consider the system $\eqref{eq:01}|_{\epsilon=0}$ in its normal form, i.e. with  $\alpha_{\scriptscriptstyle L}=a_{\scriptscriptstyle L}$, $\alpha_{\scriptscriptstyle R}=-a_{\scriptscriptstyle R}$, $b_{\scriptscriptstyle C}=c_{\scriptscriptstyle C}=1$ and $a_{\scriptscriptstyle C}=\alpha_{\scriptscriptstyle C}=0$. Note that when $\beta_{\scriptscriptstyle C}=0$, we have that the singular point of the central subsystem from $\eqref{eq:01}|_{\epsilon=0}$ is at the origin. In this case, assuming the hypotheses (Hi), i=1,2,3, the period annulus of system $\eqref{eq:01}|_{\epsilon=0}$ has all its periodic orbits passing through the three zones bounded by the orbit $L_0$ of the central subsystem tangent to straight lines $x=\pm 1$ in the points $P_{\scriptscriptstyle R}=(1,0)$ and $P_{\scriptscriptstyle L}=(-1,0)$ (see Fig. \ref{fig:07} (a)).
	
	If $\beta_{\scriptscriptstyle C}\ne0$, after a reflection around the straight line $x=0$ (if necessary), we can assuming without loss of generality that $\beta_{\scriptscriptstyle C}>0$. In this case, the period annulus of system $\eqref{eq:01}|_{\epsilon=0}$ has all its periodic orbits passing through the three zones bounded by the orbit $L_0^1$ that is obtained by the orbit of the central subsystem tangent to straight lines $x= -1$ in the point $P_{\scriptscriptstyle L}=(-1,0)$. Note that, $ L_0^1$ intercept crosswise the straight line $x= 1$ in two distinct points. Moreover, the periodic annulus has periodic orbits passing by two zones, which are bounded by only $L_0^1$ when $\beta_{\scriptscriptstyle C}\geq1$, or bounded by $L_0^1$ and $ L_{0}^2$ when  $0<\beta_{\scriptscriptstyle C}<1$, where $ L_{0}^2$ is the orbit obtained by the orbit of the central subsystem tangent to straight lines $x= 1$ in the point $P_{\scriptscriptstyle R}=(1,0)$ contained in the region bounded by $L_0^1$ (see Fig. \ref{fig:07} (b)--(c)).
	
	The case $\beta_{\scriptscriptstyle C}=0$ was studied in the paper \cite{Pes22a}, where the authors showed that at least 3 limit cycles can bifurcate from a periodic annulus  perturbed by piecewise linear polynomials. In this paper, we will study only the case $\beta_{\scriptscriptstyle C}>0$, and we will show that either at least six limit cycles can bifurcate from periodic annulus near the periodic orbit $ L_{0}^1$ when $0<\beta_{\scriptscriptstyle C}\leq1$ or at least four limit cycles can bifurcate from periodic annulus near the periodic orbit $ L_{0}^1$ when $\beta_{\scriptscriptstyle C}>1$. 
	
	\begin{figure}[h!]
		\begin{center}		
			\begin{overpic}[width=5in]{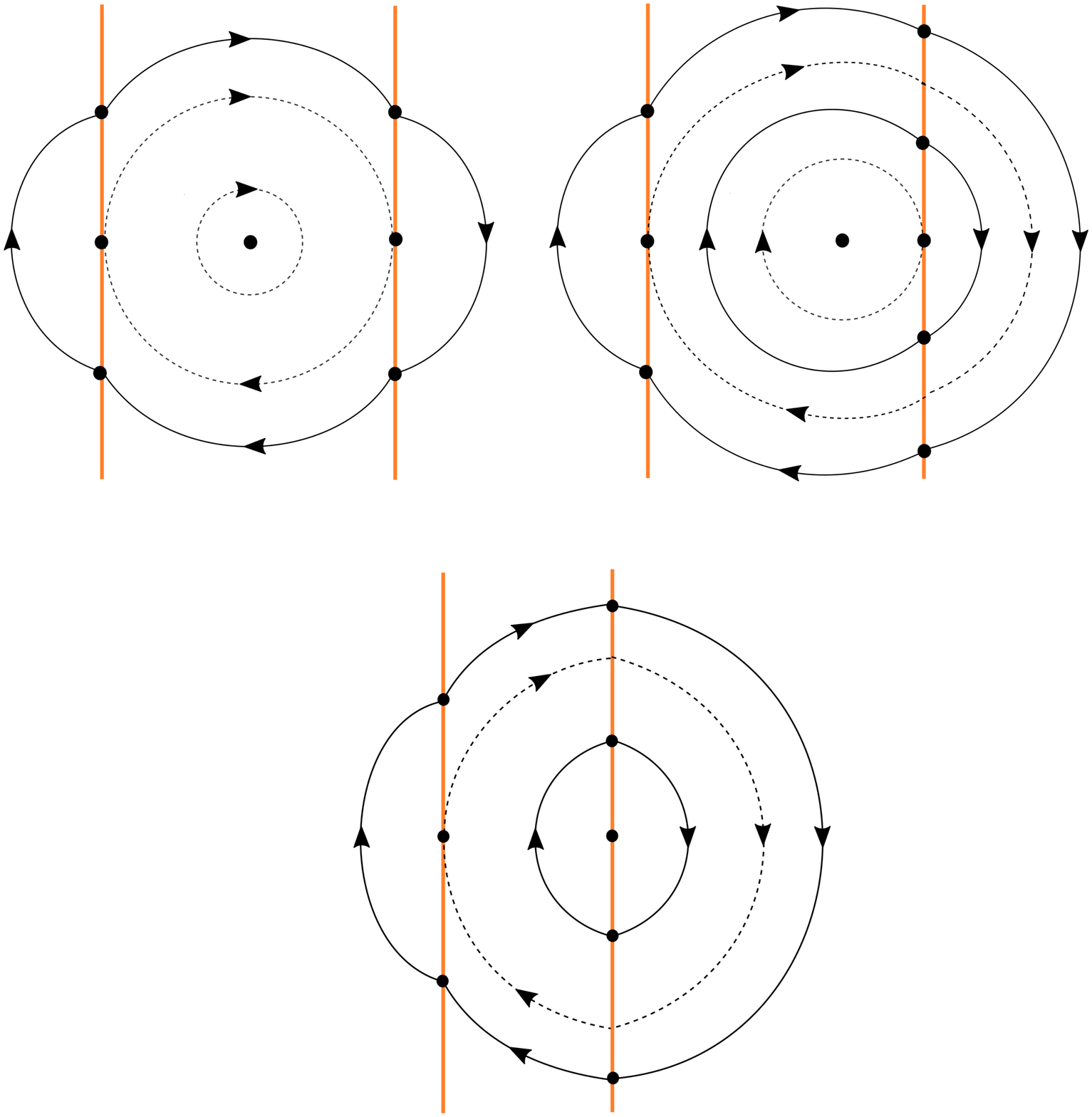}
				%\begin{overpic}[grid,width=5in\textwidth]{alphac.eps}
				\put(6,53) {$x=1$}
				\put(32,53) {$x=-1$}
				\put(78,53) {$x=-1$}
				\put(55,53) {$x=1$}	
				\put(36,-3) {$x=-1$}
				\put(53,-3) {$x=1$}		
				\put(37,77) {$P_{\scriptscriptstyle R}$}
				\put(5,77) {$P_{\scriptscriptstyle L}$}
				\put(27,90) {$L_0$}
				\put(53,77) {$P_{\scriptscriptstyle L}$}
				\put(83,77) {$P_{\scriptscriptstyle R}$}
				\put(78,93.5) {$L_0^1$}
				\put(77,85) {$L_{0}^2$}
				\put(35,25) {$P_{\scriptscriptstyle L}$}
				\put(56,25) {$P_{\scriptscriptstyle R}$}
				\put(50,42) {$L_0^1$}
				\put(21,51) {(a)}
				\put(70,51) {(b)}
				\put(48,-4) {(c)}
			\end{overpic}
		\end{center}
		\vspace{0.7cm}
		\caption{Periodic orbits tangent to straight lines $x=\pm 1$ of system $\eqref{eq:01}|_{\epsilon=0}$ with  $\alpha_{\scriptscriptstyle L}=a_{\scriptscriptstyle L}$, $\alpha_{\scriptscriptstyle R}=-a_{\scriptscriptstyle R}$, $b_{\scriptscriptstyle C}=c_{\scriptscriptstyle C}=1$ and $a_{\scriptscriptstyle C}=\alpha_{\scriptscriptstyle C}=0$ when $\beta_{\scriptscriptstyle C}=0$ (a),  $0<\beta_{\scriptscriptstyle C}<1$ (b) and $\beta_{\scriptscriptstyle C}\geq1$ (c).}\label{fig:07}
	\end{figure} 
\end{remark}

%%%%%%%%%%%%%%%%%%%%%%%%%%%%%%%%%%%%%%%%%%%%%%%%%%%%%%%%%%%%%%%%%%%%%%%%%%%%%%%%%%%%%%%%%%%%

\section{Melnikov Function}\label{sec:mel}
In this section, we will introduce the first order Melnikov function associated to system $\eqref{eq:01}$ which will be needed  to prove the main result of this paper.

For this purpose, suppose that unperturbed system $\eqref{eq:01}|_{\epsilon=0}$ satisfies hypothesis (H3), i.e. there are a open interval $J_0=(\alpha_0,\beta_0)$ and a periodic annulus consisting of a family of crossing periodic orbits $L^0_h$, with $h\in J_0$, such that each orbit of this family crosses the straight lines $x=\pm 1$ in four points, $A(h)=(1,a(h))$, $A_1(h)=(1,a_1(h))$, $A_2(h)=(-1,a_2(h))$ and $A_3(h)=(-1,a_3(h))$, with $a_1(h)<a(h)$ and $a_2(h)<a_3(h)$, thought the three zones with clockwise orientation (see Fig. \ref{fig:01}), satisfying the following equations
\begin{equation*}\label{eq:050}
	\begin{aligned}
		& H^{\scriptscriptstyle R}(A(h))=H^{\scriptscriptstyle R}(A_1(h)), \\
		& H^{\scriptscriptstyle C}(A_1(h))=H^{\scriptscriptstyle C}(A_2(h)), \\
		& H^{\scriptscriptstyle L}(A_2(h))=H^{\scriptscriptstyle L}(A_3(h)), \\
		& H^{\scriptscriptstyle C}(A_3(h))=H^{\scriptscriptstyle C}(A(h)), 
	\end{aligned}
\end{equation*} 
and, for $h\in J_0$,
$$H^{\scriptscriptstyle R}_y(A(h))\,H^{\scriptscriptstyle R}_y(A_1(h))\,H^{\scriptscriptstyle L}_y(A_2(h))\,H^{\scriptscriptstyle L}_y(A_3(h))\ne 0,$$ 
$$H^{\scriptscriptstyle C}_y(A(h))\,H^{\scriptscriptstyle C}_y(A_1(h))\,H^{\scriptscriptstyle C}_y(A_2(h))\,H^{\scriptscriptstyle C}_y(A_3(h))\ne 0.$$
Moreover, for each $h\in J_0$, system $\eqref{eq:01}|_{\epsilon=0}$ has a crossing periodic orbit $L^0_h=\widehat{AA_1}\cup\widehat{A_1A_2}\cup\widehat{A_2A_3}\cup\widehat{A_3A}$, passing through these points (see Fig. \ref{fig:01}).	

We also have, by hypothesis (H3), that there is a open interval $J_1=(\alpha_1,\beta_1)$ such that the unperturbed system $\eqref{eq:01}|_{\epsilon=0}$ has a periodic annulus consisting of a family of crossing periodic orbits $L_h^1$,  $h\in J_1$, and each orbit of this family crosses the straight lines $x=1$ in two points, $B(h)=(1,b(h))$ and $B_1(h)=(1,b_1(h))$, with $b_1(h)<b(h)$, through the two zones with clockwise orientation (see Fig. \ref{fig:01}), satisfying the following equations
\begin{equation*}\label{eq:051}
	\begin{aligned}
		& H^{\scriptscriptstyle R}(B(h))=H^{\scriptscriptstyle R}(B_1(h)), \\
		& H^{\scriptscriptstyle C}(B_1(h))=H^{\scriptscriptstyle C}(B(h)),
	\end{aligned}
\end{equation*}
and, for $h\in J_1$,
$$H^{\scriptscriptstyle R}_y(B(h))\,H^{\scriptscriptstyle R}_y(B_1(h))\,H^{\scriptscriptstyle C}_y(B(h))\,H^{\scriptscriptstyle C}_y(B_1(h))\ne 0.$$ 
Moreover, for each $h\in J_1$, system $\eqref{eq:01}|_{\epsilon=0}$ has a crossing periodic orbit $L^1_h=\widehat{BB_1}\cup\widehat{B_1B}$, passing through these points (see Fig. \ref{fig:01}).

\begin{figure}[h]
	\begin{center}		
		\begin{overpic}[width=3in]{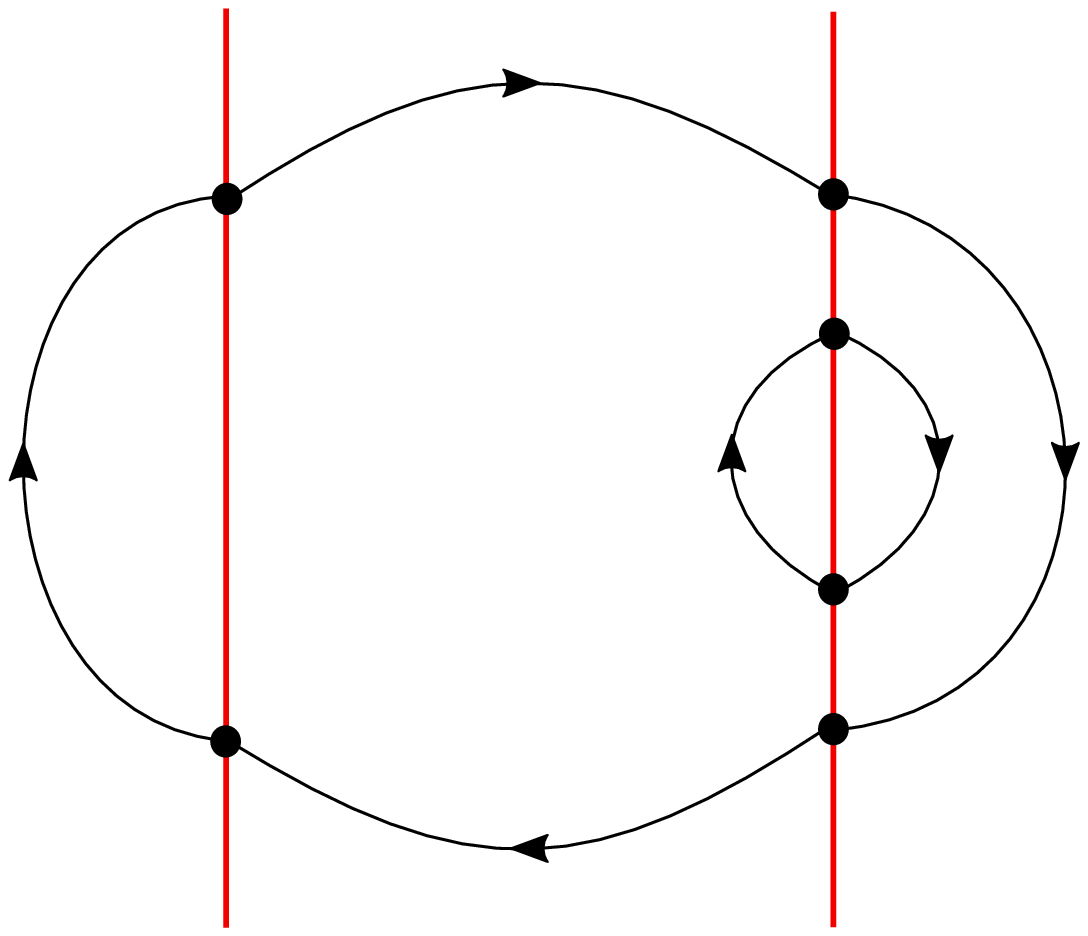}
			%\begin{overpic}[grid,width=0.95\textwidth]{fig1.eps}
			\put(71,-4) {$x=1$}
			\put(12,-4) {$x=-1$}
			\put(79,70) {$A(h)$}
			\put(79,14) {$A_1(h)$}
			\put(7,12) {$A_2(h)$}
			\put(7,70) {$A_3(h)$}
			\put(65,57) {$B(h)$}
			\put(64,27) {$B_1(h)$}
			\put(58,79) {$L_h^0$}
			\put(86,50) {$L_h^1$}
		\end{overpic}
	\end{center}
	\vspace{0.7cm}
	\caption{The crossing periodic orbit of system  $\eqref{eq:01}|_{\epsilon=0}$.}\label{fig:01}
\end{figure} 

Consider, for $h\in J_0$, the solution of right subsystem from \eqref{eq:01} starting at point $A(h)$. Let $A_{1\epsilon}(h)=(1,a_{1\epsilon}(h))$ be the first intersection point of this orbit with straight line $x=1$. Denote by $A_{2\epsilon}(h)=(-1,a_{2\epsilon}(h))$ the first intersection point of the orbit of central subsystem from \eqref{eq:01} starting at $A_{1\epsilon}(h)$ with straight line $x=-1$, $A_{3\epsilon}(h)=(-1,a_{3\epsilon}(h))$ the first intersection point of the orbit of left subsystem from \eqref{eq:01} starting at $A_{2\epsilon}(h)$ with straight line $x=-1$ and $A_{\epsilon}(h)=(1,a_{\epsilon}(h))$ the first intersection point of the orbit of central subsystem from \eqref{eq:01} starting at $A_{3\epsilon}(h)$ with straight line $x=1$ (see Fig. \ref{fig:02}). Now, for $h\in J_1$, consider  the solution of right subsystem from \eqref{eq:01} starting at point $B(h)$. Let $B_{1\epsilon}(h)=(1,b_{1\epsilon}(h))$ be the first intersection point of this orbit with straight line $x=1$ and  $B_{\epsilon}(h)=(1,b_{\epsilon}(h))$ the first intersection point of the orbit of central subsystem from \eqref{eq:01} starting at $B_{1\epsilon}(h)$ with straight line $x=1$ (see Fig. \ref{fig:02}).

\begin{figure}[h]
	\begin{center}		
		\begin{overpic}[width=3in]{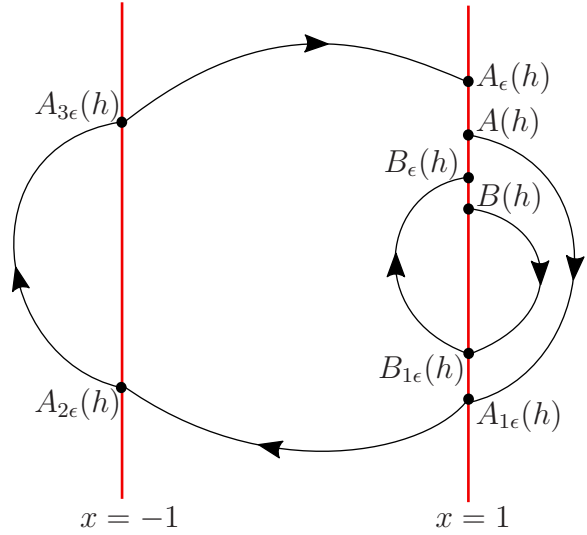}
			%\begin{overpic}[grid,width=0.95\textwidth]{fig1.eps}
			\put(74,-4) {$x=1$}
			\put(12,-4) {$x=-1$}
			\put(81,65) {$A(h)$}
			\put(81,14) {$A_{1\epsilon}(h)$}
			\put(4,16) {$A_{2\epsilon}(h)$}
			\put(4,68) {$A_{3\epsilon}(h)$}
			\put(81,73) {$A_{\epsilon}(h)$}
			\put(81,52) {$B(h)$}
			\put(64,22) {$B_{1\epsilon}(h)$}
			\put(65,58) {$B_{\epsilon}(h)$}
		\end{overpic}
	\end{center}
	\vspace{0.7cm}
	\caption{Poincaré maps of system  \eqref{eq:01}.}\label{fig:02}
\end{figure} 
We define the Poincaré maps of piecewise system \eqref{eq:01} as follows, 
\begin{equation*}
	\begin{aligned}
		& H^{\scriptscriptstyle R}(A_{\epsilon}(h))-H^{\scriptscriptstyle R}(A(h))=\epsilon M_0(h)+\mathcal{O}(\epsilon^2),\quad \forall h\in J_0,\\
		& H^{\scriptscriptstyle R}(B_{\epsilon}(h))-H^{\scriptscriptstyle R}(B(h))=\epsilon M_1(h)+\mathcal{O}(\epsilon^2),\quad \forall h\in J_1,
	\end{aligned}
\end{equation*}
where $M_0(h)$ and $M_1(h)$ are called the {\it first order Melnikov functions} associated to piecewise system \eqref{eq:01} in three and two zones, respectively. Then, following the steps of the proof of the Theorem 1.1 in \cite{Liu10} for the case in two zones and, doing the obvious adaptations, for the case of three zones, we can prove the following theorem (see also \cite{Xio21}).
\begin{theorem}\label{teo:mel}
	Consider  system \eqref{eq:01} with $0\leq \epsilon <<1$ and suppose that the unperturbed system $\eqref{eq:01}|_{\epsilon=0}$ satisfies the hypotheses $(\text{H}i)$, $i=1,2,3$. Then the first order Melnikov functions associated to system \eqref{eq:01} can be expressed as
	\begin{equation}\label{eq:mel0}
		\begin{aligned}
			M_0(h) &  = \frac{H_y^{\scriptscriptstyle R}(A)}{H_y^{\scriptscriptstyle C}(A)} I_{\scriptscriptstyle C}^0 + \frac{H_y^{\scriptscriptstyle R}(A)H_y^{\scriptscriptstyle C}(A_3)}{H_y^{\scriptscriptstyle C}(A)H_y^{\scriptscriptstyle L}(A_3)} I_{\scriptscriptstyle L}^0 + \frac{H_y^{\scriptscriptstyle R}(A)H_y^{\scriptscriptstyle C}(A_3)H_y^{\scriptscriptstyle L}(A_2)}{H_y^{\scriptscriptstyle C}(A)H_y^{\scriptscriptstyle L}(A_3)H_y^{\scriptscriptstyle C}(A_2)} \bar{I}_{\scriptscriptstyle C}^0 \\
			&\quad  +  \frac{H_y^{\scriptscriptstyle R}(A)H_y^{\scriptscriptstyle C}(A_3)H_y^{\scriptscriptstyle L}(A_2)H_y^{\scriptscriptstyle C}(A_1)}{H_y^{\scriptscriptstyle C}(A)H_y^{\scriptscriptstyle L}(A_3)H_y^{\scriptscriptstyle C}(A_2)H_y^{\scriptscriptstyle R}(A_1)} I_{\scriptscriptstyle R}^0,\quad h\in J_0,
		\end{aligned}
	\end{equation} 
	and
	\begin{equation}\label{eq:mel1}
		\begin{aligned} 
			M_1(h) & = \frac{H_y^{\scriptscriptstyle R}(B)}{H_y^{\scriptscriptstyle C}(B)}I^{\scriptscriptstyle 1}_{\scriptscriptstyle C} + \frac{H_y^{\scriptscriptstyle R}(B)H_y^{\scriptscriptstyle C}(B_1)}{H_y^{\scriptscriptstyle C}(B)H_y^{\scriptscriptstyle R}(B_1)} I^{\scriptscriptstyle 1}_{\scriptscriptstyle R}, \quad h\in J_1, 
		\end{aligned}
	\end{equation}  
	where
	$$
	I^{\scriptscriptstyle 0}_{\scriptscriptstyle C}= \int_{\widehat{A_3A}}g_{\scriptscriptstyle C}dx-f_{\scriptscriptstyle C}dy,\quad I^{\scriptscriptstyle 0}_{\scriptscriptstyle L}= \int_{\widehat{A_2A_3}}g_{\scriptscriptstyle L}dx-f_{\scriptscriptstyle L}dy,\quad \bar{I}^{\scriptscriptstyle 0}_{\scriptscriptstyle C}=\int_{\widehat{A_1A_2}}g_{\scriptscriptstyle C}dx-f_{\scriptscriptstyle C}dy,
	$$
	$$
	I^{\scriptscriptstyle 0}_{\scriptscriptstyle R}=\int_{\widehat{AA_1}}g_{\scriptscriptstyle C}dx-f_{\scriptscriptstyle C}dy,\quad I^{\scriptscriptstyle 1}_{\scriptscriptstyle C}=\int_{\widehat{B_1B}}g_{\scriptscriptstyle C}dx-f_{\scriptscriptstyle C}dy\quad\text{and}\quad I^{\scriptscriptstyle 1}_{\scriptscriptstyle R}=\int_{\widehat{BB_1}}g_{\scriptscriptstyle R}dx-f_{\scriptscriptstyle R}dy,
	$$
	Furthermore, if $M_i(h)$, for $i=0,1$, has a simple zero at $h^{*}$, then for $0< \epsilon <<1$, the system \eqref{eq:01} has a unique limit cycle near $L_{h^{*}}$. 
\end{theorem}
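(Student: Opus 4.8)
\emph{Proof proposal.} The plan is to follow the classical derivation of first--order Melnikov functions for piecewise Hamiltonian systems (as in the proof of Theorem~1.1 of \cite{Liu10}), by tracking the successive Poincaré half--maps across the zones and composing them. The elementary observation underlying everything is that along any trajectory of \eqref{eq:01} in zone $Z\in\{L,C,R\}$ one has $\dot H^{\scriptscriptstyle Z}=\epsilon(f_{\scriptscriptstyle Z}H^{\scriptscriptstyle Z}_x+g_{\scriptscriptstyle Z}H^{\scriptscriptstyle Z}_y)$, hence $dH^{\scriptscriptstyle Z}=\epsilon\,(g_{\scriptscriptstyle Z}\,dx-f_{\scriptscriptstyle Z}\,dy)$ along the flow; so the increment of $H^{\scriptscriptstyle Z}$ along an arc of a perturbed orbit equals $\epsilon$ times a line integral, and since the integrand already carries a factor $\epsilon$, replacing the perturbed arc by the corresponding unperturbed arc of $L^0_h$ (resp. $L^1_h$) costs only $\mathcal{O}(\epsilon^2)$.

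First I would settle the two--zone case. Starting from $B(h)$ on $x=1$, flow by the right subsystem to $B_{1\epsilon}(h)=(1,b_{1\epsilon}(h))$; using $H^{\scriptscriptstyle R}(B_{1\epsilon})-H^{\scriptscriptstyle R}(B)=\epsilon I^{\scriptscriptstyle 1}_{\scriptscriptstyle R}+\mathcal{O}(\epsilon^2)$, the Taylor expansion $H^{\scriptscriptstyle R}(1,b_{1\epsilon})=H^{\scriptscriptstyle R}(1,b_1)+H^{\scriptscriptstyle R}_y(B_1)(b_{1\epsilon}-b_1)+\cdots$, the unperturbed identity $H^{\scriptscriptstyle R}(B)=H^{\scriptscriptstyle R}(B_1)$, and $H^{\scriptscriptstyle R}_y(B_1)\ne0$, one gets $b_{1\epsilon}-b_1=\epsilon I^{\scriptscriptstyle 1}_{\scriptscriptstyle R}/H^{\scriptscriptstyle R}_y(B_1)+\mathcal{O}(\epsilon^2)$. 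Then flow by the central subsystem from $B_{1\epsilon}$ to $B_\epsilon(h)=(1,b_\epsilon(h))$, use $H^{\scriptscriptstyle C}(B_\epsilon)-H^{\scriptscriptstyle C}(B_{1\epsilon})=\epsilon I^{\scriptscriptstyle 1}_{\scriptscriptstyle C}+\mathcal{O}(\epsilon^2)$, the identity $H^{\scriptscriptstyle C}(B_1)=H^{\scriptscriptstyle C}(B)$, the value of $b_{1\epsilon}-b_1$ just found, and $H^{\scriptscriptstyle C}_y(B)\ne0$ to obtain $b_\epsilon-b=\frac{\epsilon}{H^{\scriptscriptstyle C}_y(B)}\bigl(I^{\scriptscriptstyle 1}_{\scriptscriptstyle C}+\frac{H^{\scriptscriptstyle C}_y(B_1)}{H^{\scriptscriptstyle R}_y(B_1)}I^{\scriptscriptstyle 1}_{\scriptscriptstyle R}\bigr)+\mathcal{O}(\epsilon^2)$. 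A final expansion $H^{\scriptscriptstyle R}(B_\epsilon)-H^{\scriptscriptstyle R}(B)=H^{\scriptscriptstyle R}_y(B)(b_\epsilon-b)+\mathcal{O}(\epsilon^2)$ then yields \eqref{eq:mel1}.

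The three--zone case is the same bookkeeping with four arcs: successively flow $A\to A_{1\epsilon}$ (right), $A_{1\epsilon}\to A_{2\epsilon}$ (central), $A_{2\epsilon}\to A_{3\epsilon}$ (left), $A_{3\epsilon}\to A_\epsilon$ (central); at each switching point on $x=\pm1$ expand the relevant Hamiltonian, use the corresponding unperturbed matching identity and the nonvanishing of the relevant $H^{\scriptscriptstyle Z}_y$, and substitute the previously computed first--order displacement. Collecting the resulting telescoping products of the ratios $H^{\scriptscriptstyle Z}_y(\cdot)/H^{\scriptscriptstyle Z'}_y(\cdot)$ produces exactly \eqref{eq:mel0}. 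For the last assertion, set $d_i(h,\epsilon):=H^{\scriptscriptstyle R}(\text{return point})-H^{\scriptscriptstyle R}(\text{start point})$; the computations above give $d_i(h,\epsilon)=\epsilon\bigl(M_i(h)+\epsilon R_i(h,\epsilon)\bigr)$ with $R_i$ jointly continuous and $C^1$ in $h$, so if $M_i(h^*)=0$ and $M_i'(h^*)\ne0$ the implicit function theorem applied to $M_i(h)+\epsilon R_i(h,\epsilon)=0$ yields, for each small $\epsilon>0$, a unique zero $h(\epsilon)\to h^*$ of $d_i(\cdot,\epsilon)$; since $H^{\scriptscriptstyle R}_y\ne0$ at the start point on the switching line, a zero of $d_i$ is equivalent to the return point coinciding with the start point, i.e. to a crossing periodic orbit of \eqref{eq:01} near $L_{h^*}$.

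The main obstacle I expect is the uniform $\mathcal{O}(\epsilon^2)$ control: one must justify, uniformly for $h$ in a compact subinterval of $J_0$ (resp. $J_1$) bounded away from the tangency points, both the smooth dependence of the perturbed transition ordinates $a_{i\epsilon}(h),b_{i\epsilon}(h)$ on $(h,\epsilon)$ and the replacement of each perturbed arc by the unperturbed one inside the line integrals. This is where the argument of \cite{Liu10} does the real work (transversality of the flow to $x=\pm1$, guaranteed by (H2) except at tangencies, makes the half--maps well defined and $C^1$); the three--zone version only requires iterating its two--zone step once more, with no new phenomenon, since by (H2) all switching points are crossing points.
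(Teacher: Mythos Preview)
Your proposal is correct and follows precisely the approach the paper intends: the paper does not give its own proof but simply states that the result follows by ``following the steps of the proof of the Theorem~1.1 in \cite{Liu10} for the case in two zones and, doing the obvious adaptations, for the case of three zones'' (with a further pointer to \cite{Xio21}). Your sketch is exactly that adaptation---tracking the successive half--maps, using $dH^{\scriptscriptstyle Z}=\epsilon(g_{\scriptscriptstyle Z}\,dx-f_{\scriptscriptstyle Z}\,dy)$ along orbits, Taylor--expanding at each switching ordinate, and telescoping the ratios $H^{\scriptscriptstyle Z}_y/H^{\scriptscriptstyle Z'}_y$---so there is nothing to compare: you have supplied the details the paper omits.
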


%%%%%%%%%%%%%%%%%%%%%%%%%%%%%%%%%%%%%%%%%%%%%%%%%%%%%%%%%%%%%%%%%%%%%%%%%%%%%%%%%%%%%%%%%%%%

In order to compute the zeros of the first order Melnikov functions associated to system \eqref{eq:01}, it is necessary to find the open intervals $J_0$ and $J_1$, where they are defined. For this, we have the follow proposition.

\begin{proposition}
	Consider the system $\eqref{eq:01}|_{\epsilon=0}$, with $\alpha_{\scriptscriptstyle L}=a_{\scriptscriptstyle L}$, $\alpha_{\scriptscriptstyle R}=-a_{\scriptscriptstyle R}$, $b_{\scriptscriptstyle C}=1$, $c_{\scriptscriptstyle C}=-1$ and $a_{\scriptscriptstyle C}=\alpha_{\scriptscriptstyle C}=0$, satisfying the hypotheses (Hi), $i=1,2,3$. 
	Then  $J_0=(2\sqrt{\beta_{\scriptscriptstyle C}},\tau_{\scriptscriptstyle R})$ and $J_1=(0,2\sqrt{\beta_{\scriptscriptstyle C}})$, where $\tau_{\scriptscriptstyle R}=(a_{\scriptscriptstyle R}^2-b_{\scriptscriptstyle R}\beta_{\scriptscriptstyle R}-\omega_{\scriptscriptstyle R}^2)/b_{\scriptscriptstyle R}\omega_{\scriptscriptstyle R}$ with $\omega_{\scriptscriptstyle R}=\sqrt{a^2_{\scriptscriptstyle R} + b_{\scriptscriptstyle R}c_{\scriptscriptstyle R}}$, and the periodic annulus are equivalents to one of the figures of Fig. \ref{fig:03}--\ref{fig:09}. 
\end{proposition}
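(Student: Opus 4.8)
The plan is to substitute the normal form into the three Hamiltonians and then read off the geometry of the crossing periodic orbits from the explicit level curves. In these coordinates one has $H^{\scriptscriptstyle C}(x,y)=\tfrac12 y^2+\tfrac12 x^2-\beta_{\scriptscriptstyle C}x=\tfrac12\left(y^2+(x-\beta_{\scriptscriptstyle C})^2\right)-\tfrac12\beta_{\scriptscriptstyle C}^2$, so the orbits of the central subsystem are circles centered at the center $(\beta_{\scriptscriptstyle C},0)$; and, since $\alpha_{\scriptscriptstyle L}=a_{\scriptscriptstyle L}$ and $\alpha_{\scriptscriptstyle R}=-a_{\scriptscriptstyle R}$, the linear-in-$y$ terms of $H^{\scriptscriptstyle L}$ and $H^{\scriptscriptstyle R}$ cancel on $x=-1$ and $x=1$ respectively, so that $y\mapsto H^{\scriptscriptstyle L}(-1,y)$, $y\mapsto H^{\scriptscriptstyle R}(1,y)$ (and also $y\mapsto H^{\scriptscriptstyle C}(\pm1,y)$) are even. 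I parametrize the orbits of both families by $h>0$, the ordinate of their upper crossing with $x=1$, i.e. $A(h)=(1,h)$ and $B(h)=(1,h)$.

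For the two-zone family, the matching equations together with the evenness of $H^{\scriptscriptstyle C}(1,\cdot)$ and $H^{\scriptscriptstyle R}(1,\cdot)$ force $b_1(h)=-h$, so the central arc of $L_h^1$ is the portion with $x\le1$ of the circle through $(1,\pm h)$ about $(\beta_{\scriptscriptstyle C},0)$, of radius $\sqrt{(1-\beta_{\scriptscriptstyle C})^2+h^2}$; this arc stays inside the strip $-1\le x\le1$ --- hence, glued to the right-zone arc, bounds a crossing periodic orbit through exactly two zones --- precisely when the circle's leftmost point $\beta_{\scriptscriptstyle C}-\sqrt{(1-\beta_{\scriptscriptstyle C})^2+h^2}$ is $\ge-1$, i.e. when $h\le 2\sqrt{\beta_{\scriptscriptstyle C}}$, with equality giving $L_0^1$. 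Since the right-zone arc exists throughout $0<h<\tau_{\scriptscriptstyle R}$ (shown next) and $\tau_{\scriptscriptstyle R}>2\sqrt{\beta_{\scriptscriptstyle C}}$ by (H3), this yields $J_1=(0,2\sqrt{\beta_{\scriptscriptstyle C}})$.

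For the three-zone family, the same cancellations give $a_1(h)=-h$ and $a_2(h)=-a_3(h)$, and both central matching equations reduce to $a_3(h)^2=h^2-4\beta_{\scriptscriptstyle C}$, forcing $h>2\sqrt{\beta_{\scriptscriptstyle C}}$; the central part of $L_h^0$ is then the union of the two arcs in $-1\le x\le1$ of the circle of radius $\sqrt{(1-\beta_{\scriptscriptstyle C})^2+h^2}$. The upper end of $J_0$ is controlled by the arc of $\{H^{\scriptscriptstyle R}=H^{\scriptscriptstyle R}(1,h)\}$ in $x\ge1$ joining $(1,h)$ to $(1,-h)$: this is a bounded crossing arc precisely while the level $H^{\scriptscriptstyle R}(1,h)$ has not yet reached the value of $H^{\scriptscriptstyle R}$ at the saddle of the right subsystem. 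Computing that saddle, its abscissa is $x_s=(a_{\scriptscriptstyle R}^2-b_{\scriptscriptstyle R}\beta_{\scriptscriptstyle R})/\omega_{\scriptscriptstyle R}^2$; its separatrices are the lines through it in the eigendirections of $\left(\begin{smallmatrix} a_{\scriptscriptstyle R} & b_{\scriptscriptstyle R}\\ c_{\scriptscriptstyle R} & -a_{\scriptscriptstyle R}\end{smallmatrix}\right)$, and a short computation shows they meet $x=1$ at $y=\pm\tau_{\scriptscriptstyle R}$ with $\tau_{\scriptscriptstyle R}=\omega_{\scriptscriptstyle R}(x_s-1)/b_{\scriptscriptstyle R}=(a_{\scriptscriptstyle R}^2-b_{\scriptscriptstyle R}\beta_{\scriptscriptstyle R}-\omega_{\scriptscriptstyle R}^2)/(b_{\scriptscriptstyle R}\omega_{\scriptscriptstyle R})$. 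Hence the right-zone arc is a crossing arc iff $0<h<\tau_{\scriptscriptstyle R}$; one then checks, using the standing hypotheses (equivalently, the admissible configurations below), that the left-zone arc joining $(-1,\pm a_3(h))$ in $x\le-1$ does not degenerate before $h$ reaches $\tau_{\scriptscriptstyle R}$. This gives $J_0=(2\sqrt{\beta_{\scriptscriptstyle C}},\tau_{\scriptscriptstyle R})$, and $L_h^0,L_h^1\to L_0^1$ as $h\to 2\sqrt{\beta_{\scriptscriptstyle C}}$, so the two annuli abut along $L_0^1$.

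Finally, to obtain the pictures I run through the position of the center (real $0<\beta_{\scriptscriptstyle C}<1$, boundary $\beta_{\scriptscriptstyle C}=1$, virtual $\beta_{\scriptscriptstyle C}>1$), which by Remark~\ref{rem:beta} also dictates whether the two-zone annulus is further bounded by $L_0^2$, together with the real/boundary/virtual positions of the two saddles, and verify that each admissible case realizes exactly one of the phase portraits in Figs.~\ref{fig:03}--\ref{fig:09} with the annuli parametrized as above. The crux is the upper-endpoint analysis of $J_0$: one must establish that the $H^{\scriptscriptstyle R}$-level arc in the right zone persists as a crossing arc up to, and only up to, the saddle separatrix level, and that under (Hi) it is this right saddle --- not the left one --- that first obstructs the family; the remaining steps are routine substitutions in the level curves of $H^{\scriptscriptstyle R}$, $H^{\scriptscriptstyle C}$ and $H^{\scriptscriptstyle L}$.
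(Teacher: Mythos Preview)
Your approach matches the paper's: both solve the matching equations on $x=\pm1$ by exploiting the evenness in $y$ of $H^{\scriptscriptstyle C}(\pm1,\cdot)$, $H^{\scriptscriptstyle R}(1,\cdot)$, $H^{\scriptscriptstyle L}(-1,\cdot)$, locate the right saddle's separatrices at $(1,\pm\tau_{\scriptscriptstyle R})$, and read off $J_0,J_1$ from the resulting geometry.

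The one substantive imprecision is your assertion that \emph{under {\rm(Hi)}} it is the right saddle --- not the left one --- that first obstructs the three-zone family. This does not follow from (H1)--(H3): a priori the upper endpoint of $J_0$ is $\min\big(\tau_{\scriptscriptstyle R},\sqrt{\tau_{\scriptscriptstyle L}^2+4\beta_{\scriptscriptstyle C}}\big)$, according to whether the right arc or the left arc degenerates first as $h$ grows. The paper does not deduce this from (Hi); instead it imposes an additional WLOG normalization (a reflection about the $y$-axis) so that $\sqrt{\tau_{\scriptscriptstyle R}^2-4\beta_{\scriptscriptstyle C}}\le\tau_{\scriptscriptstyle L}$, which is precisely the condition $a_3(\tau_{\scriptscriptstyle R})\le\tau_{\scriptscriptstyle L}$ guaranteeing the left-zone arc persists throughout $h<\tau_{\scriptscriptstyle R}$. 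With that normalization in place your argument and the paper's coincide; your case analysis for the figures is briefer than the paper's (which also treats the degenerate configurations $\beta_{\scriptscriptstyle C}\ge\tau_{\scriptscriptstyle R}^2/4$ where no three-zone annulus exists), but it is the same enumeration.
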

\begin{proof}
	Firstly, we can note that if the saddles of the right or left subsystem from   $\eqref{eq:01}|_{\epsilon=0}$ are virtual or boundary, then we have not periodic orbits passing through the three zones.	Let  $W^u_{\scriptscriptstyle R}$ and $W^s_{\scriptscriptstyle R}$ (resp. $W^u_{\scriptscriptstyle L}$ and $W^s_{\scriptscriptstyle L}$) be the unstable and stable separatrices of the saddles of the right (resp. left) subsystems from   $\eqref{eq:01}|_{\epsilon=0}$, respectively. Denote by $P_{\scriptscriptstyle L}^{i}=W^i_{\scriptscriptstyle L}\cap \{(-1,y):y\in\mathbb{R}\}$ and $P_{\scriptscriptstyle R}^{i}=W^i_{\scriptscriptstyle R}\cap \{(1,y):y\in\mathbb{R}\}$, for $i=u,s$. After some compute, is possible to show that
	$$P_{\scriptscriptstyle L}^{u}=(-1,\tau_{\scriptscriptstyle L}),\quad P_{\scriptscriptstyle L}^{s}=(-1,-\tau_{\scriptscriptstyle L}),\quad P_{\scriptscriptstyle R}^{u}=(1,-\tau_{\scriptscriptstyle R}),\quad P_{\scriptscriptstyle R}^{s}=(1,\tau_{\scriptscriptstyle R}),$$
	where $\tau_{\scriptscriptstyle R}=(a_{\scriptscriptstyle R}^2-b_{\scriptscriptstyle R}\beta_{\scriptscriptstyle R}-\omega_{\scriptscriptstyle R}^2)/b_{\scriptscriptstyle R}\omega_{\scriptscriptstyle R}$, $\tau_{\scriptscriptstyle L}=(a_{\scriptscriptstyle L}^2+b_{\scriptscriptstyle L}\beta_{\scriptscriptstyle L}-\omega_{\scriptscriptstyle L}^2)/b_{\scriptscriptstyle L}\omega_{\scriptscriptstyle L}$, $\omega_{\scriptscriptstyle R}=\sqrt{a^2_{\scriptscriptstyle R} + b_{\scriptscriptstyle R}c_{\scriptscriptstyle R}}$ and $\omega_{\scriptscriptstyle L}=\sqrt{a^2_{\scriptscriptstyle L} + b_{\scriptscriptstyle L}c_{\scriptscriptstyle L}}$. Moreover, we have a symmetry between the points $P_{\scriptscriptstyle L}^{u}$ and $P_{\scriptscriptstyle L}^{s}$ (resp. $P_{\scriptscriptstyle R}^{u}$ and $P_{\scriptscriptstyle R}^{s}$) with respect to $x$-axis.  
	
	As the vector field $X_{\scriptscriptstyle C}$ associated with the central subsystem from $\eqref{eq:01}|_{\epsilon=0}$ is $X_{\scriptscriptstyle C}(x,y)=(y,-x+\beta_{\scriptscriptstyle C})$, with $\beta_{\scriptscriptstyle C}>0$, then we have a periodic orbit ${L}_0^1$ tangent to straight line $x=-1$ in the point $P_{\scriptscriptstyle L}=(-1,0)$ that intercept crosswise the straight line $x=1$ in two points, $P^1_{\scriptscriptstyle R}=(1,2\sqrt{\beta_{\scriptscriptstyle C}})$ and $P^2_{\scriptscriptstyle R}=(1,-2\sqrt{\beta_{\scriptscriptstyle C}})$. If $0<\beta_{\scriptscriptstyle C}< 1$, there is a periodic orbit ${L}_0^2$ tangent to straight line $x=1$ in the point $P_{\scriptscriptstyle R}=(1,0)$ contained in the region bounded by ${L}_0^1$. Moreover, the periodic orbit of central subsystem from $\eqref{eq:01}|_{\epsilon=0}$ with initial condition at the point $P_{\scriptscriptstyle R}^{u}$ intercept the straight line $x=-1$ to positive time in two points $P^2_{\scriptscriptstyle L}=(-1,-\sqrt{\tau_{\scriptscriptstyle R}^2-4\beta_{\scriptscriptstyle C}})$ and   $P^1_{\scriptscriptstyle L}=(-1,\sqrt{\tau_{\scriptscriptstyle R}^2-4\beta_{\scriptscriptstyle C}})$. Note that this periodic orbit coincides with the periodic orbit $L_0^1$ when $\beta_{\scriptscriptstyle C}=\tau_{\scriptscriptstyle R}^2/4$ (see Fig. \ref{fig:05}--\ref{fig:10}) or it do not intercept the straight line $x=-1$ when $\tau_{\scriptscriptstyle R}^2/4<\beta_{\scriptscriptstyle C}$ (see Fig. \ref{fig:06}--\ref{fig:11}). In these cases, the orbit of system  $\eqref{eq:01}|_{\epsilon=0}$ with initial condition in the point $A(h)=(1,h)$,  $h\in(0,\tau_{\scriptscriptstyle R})$, not intersect crosswise the straight line $x=-1$ to positive times, i.e. the system $\eqref{eq:01}|_{\epsilon=0}$ has no periodic orbits passing through the three zones.
	
	\begin{figure}[h]
		\begin{center}		
			\begin{overpic}[width=4.5in]{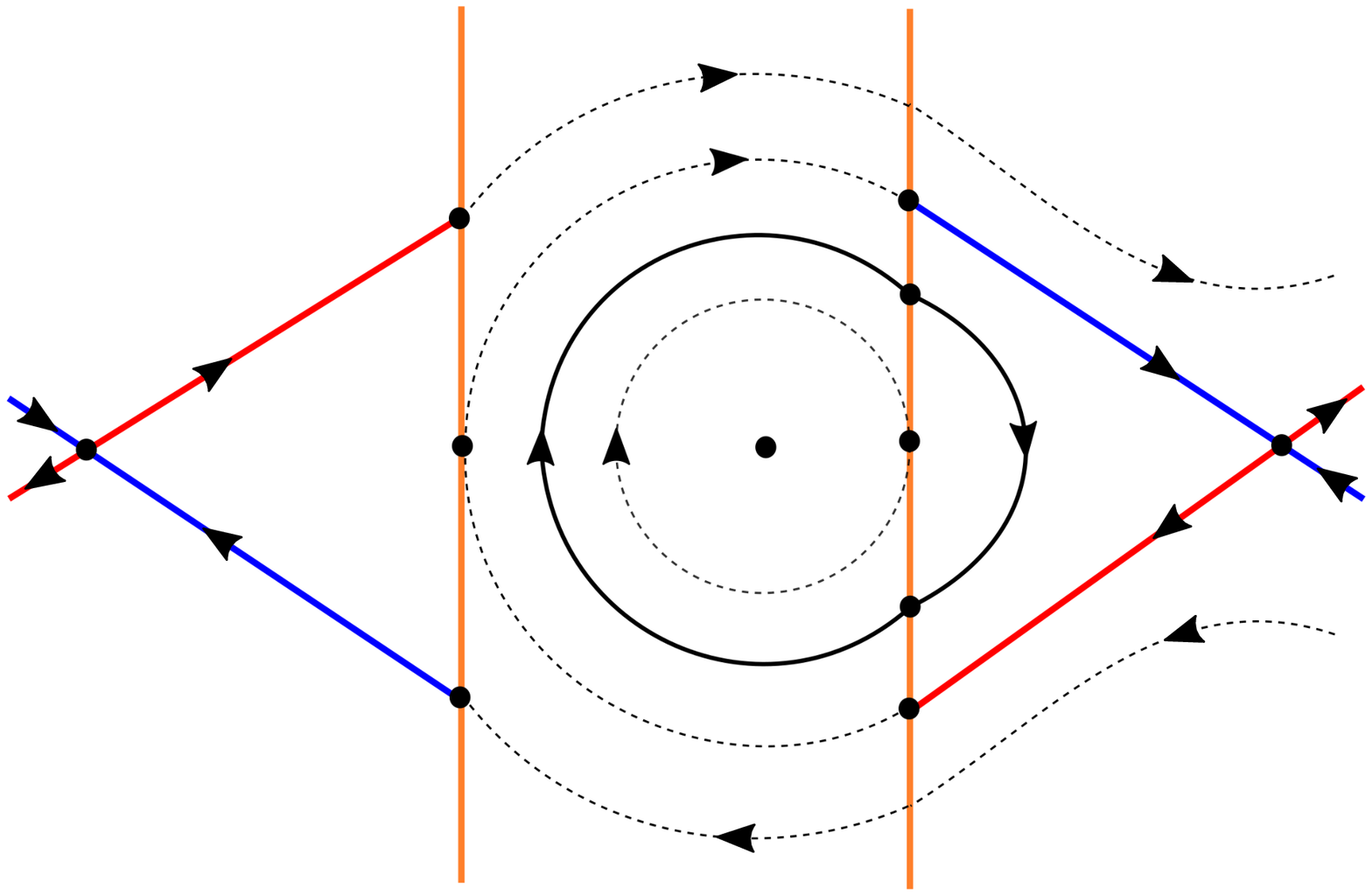}
				%\begin{overpic}[grid,width=0.95\textwidth]{fig1.eps}
				\put(27,-3) {$x=-1$}
				\put(62,-3) {$x=1$}
				\put(67,44) {$ B$}
				\put(67,18) {$ B_1$}
				\put(29,32) {$P_{\scriptscriptstyle L}$}
				\put(67.5,32) {$P_{\scriptscriptstyle R}$}
				\put(67,51) {$P_{\scriptscriptstyle R}^{s}$}
				\put(67,10.5) {$P_{\scriptscriptstyle R}^{u}$}
				\put(29,51) {$P_{\scriptscriptstyle L}^{u}$}
				\put(29,10) {$P_{\scriptscriptstyle L}^{s}$}
			\end{overpic}
		\end{center}
		\vspace{0.7cm}
		\caption{Phase portrait of system $\eqref{eq:01}|_{\epsilon=0}$ with  $\beta_{\scriptscriptstyle C}=\tau_{\scriptscriptstyle R}^2/4<1$.}\label{fig:05}
	\end{figure} 
	\begin{figure}[h]
		\begin{center}		
			\begin{overpic}[width=4.5in]{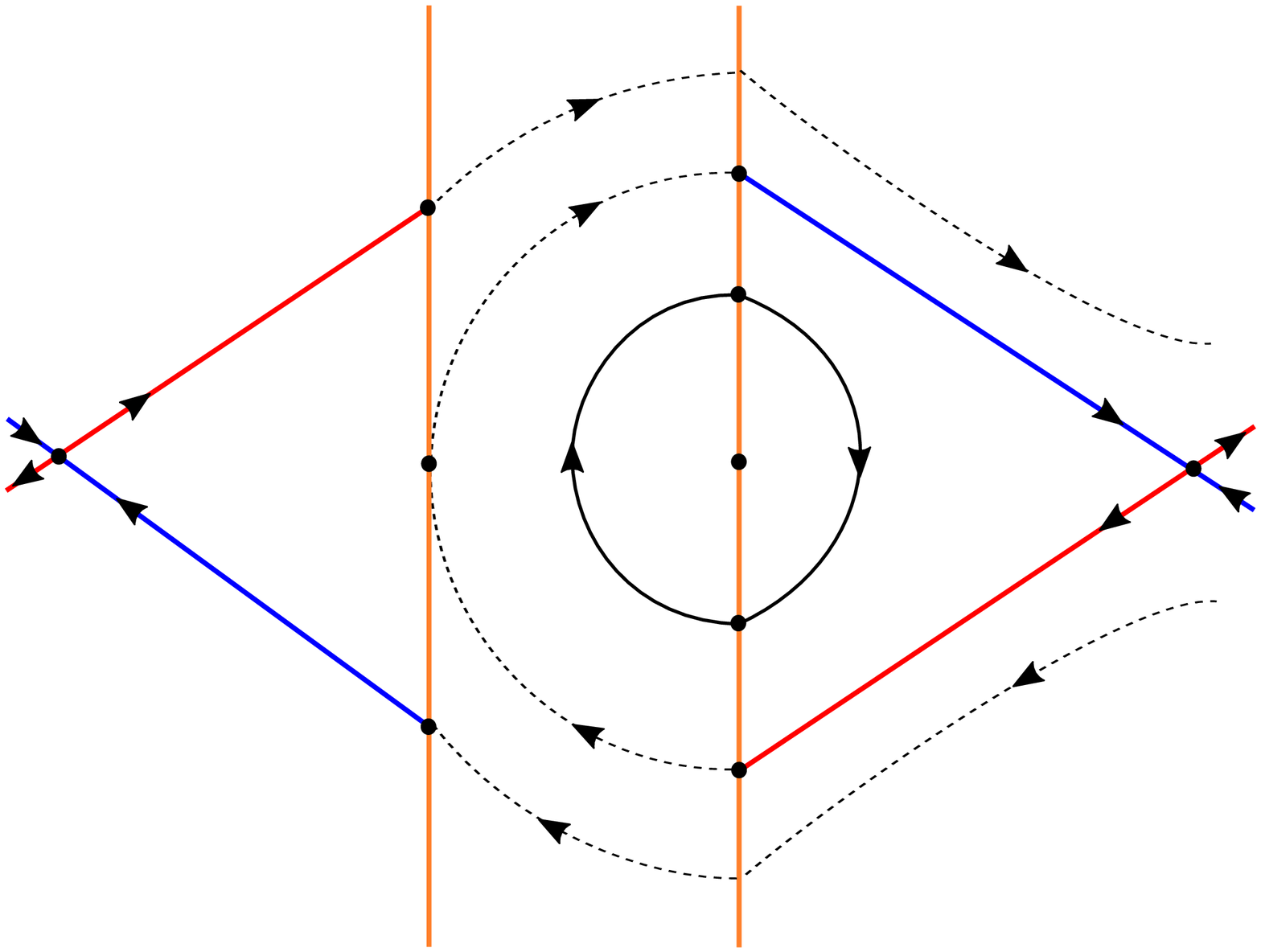}
				%\begin{overpic}[grid,width=0.95\textwidth]{fig1.eps}
				\put(28,-3) {$x=-1$}
				\put(55,-3) {$x=1$}
				\put(59,53) {$ B$}
				\put(59,23) {$ B_1$}
				\put(29,37) {$P_{\scriptscriptstyle L}$}
				\put(60,37) {$P_{\scriptscriptstyle R}$}
				\put(60,62) {$P_{\scriptscriptstyle R}^{s}$}
				\put(60,11) {$P_{\scriptscriptstyle R}^{u}$}
				\put(29,60) {$P_{\scriptscriptstyle L}^{u}$}
				\put(29,14) {$P_{\scriptscriptstyle L}^{s}$}
			\end{overpic}
		\end{center}
		\vspace{0.7cm}
		\caption{Phase portrait of system $\eqref{eq:01}|_{\epsilon=0}$ with  $\beta_{\scriptscriptstyle C}=\tau_{\scriptscriptstyle R}^2/4\geq 1$.}\label{fig:10}
	\end{figure} 
	\begin{figure}[h]
		\begin{center}		
			\begin{overpic}[width=4.5in]{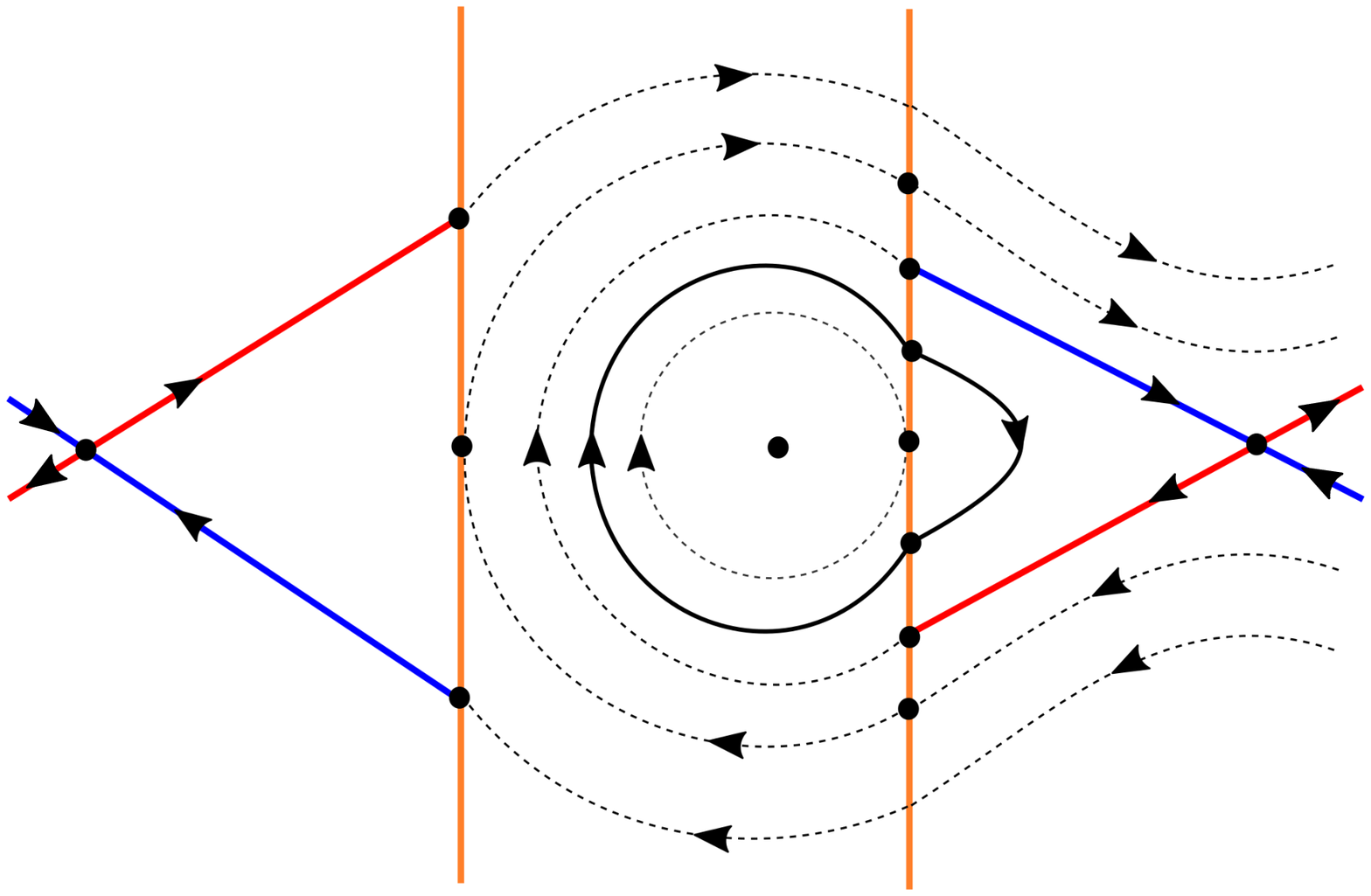}
				%\begin{overpic}[grid,width=0.95\textwidth]{fig1.eps}
				\put(27,-3) {$x=-1$}
				\put(62,-3) {$x=1$}
				\put(67,41) {$ B$}
				\put(67,23) {$ B_1$}
				\put(29,32) {$P_{\scriptscriptstyle L}$}
				\put(67.5,32) {$P_{\scriptscriptstyle R}$}
				\put(67,47) {$P_{\scriptscriptstyle R}^{s}$}
				\put(67,17) {$P_{\scriptscriptstyle R}^{u}$}
				\put(29,51) {$P_{\scriptscriptstyle L}^{u}$}
				\put(29,11) {$P_{\scriptscriptstyle L}^{s}$}
				\put(67.5,51.5) {$P^1_{\scriptscriptstyle R}$}
				\put(67.5,11) {$P^2_{\scriptscriptstyle R}$}
			\end{overpic}
		\end{center}
		\vspace{0.7cm}
		\caption{Phase portrait of system $\eqref{eq:01}|_{\epsilon=0}$ with  $\tau_{\scriptscriptstyle R}^2/4<\beta_{\scriptscriptstyle C}<1$.}\label{fig:06}
	\end{figure} 
	\begin{figure}[h]
		\begin{center}		
			\begin{overpic}[width=4.5in]{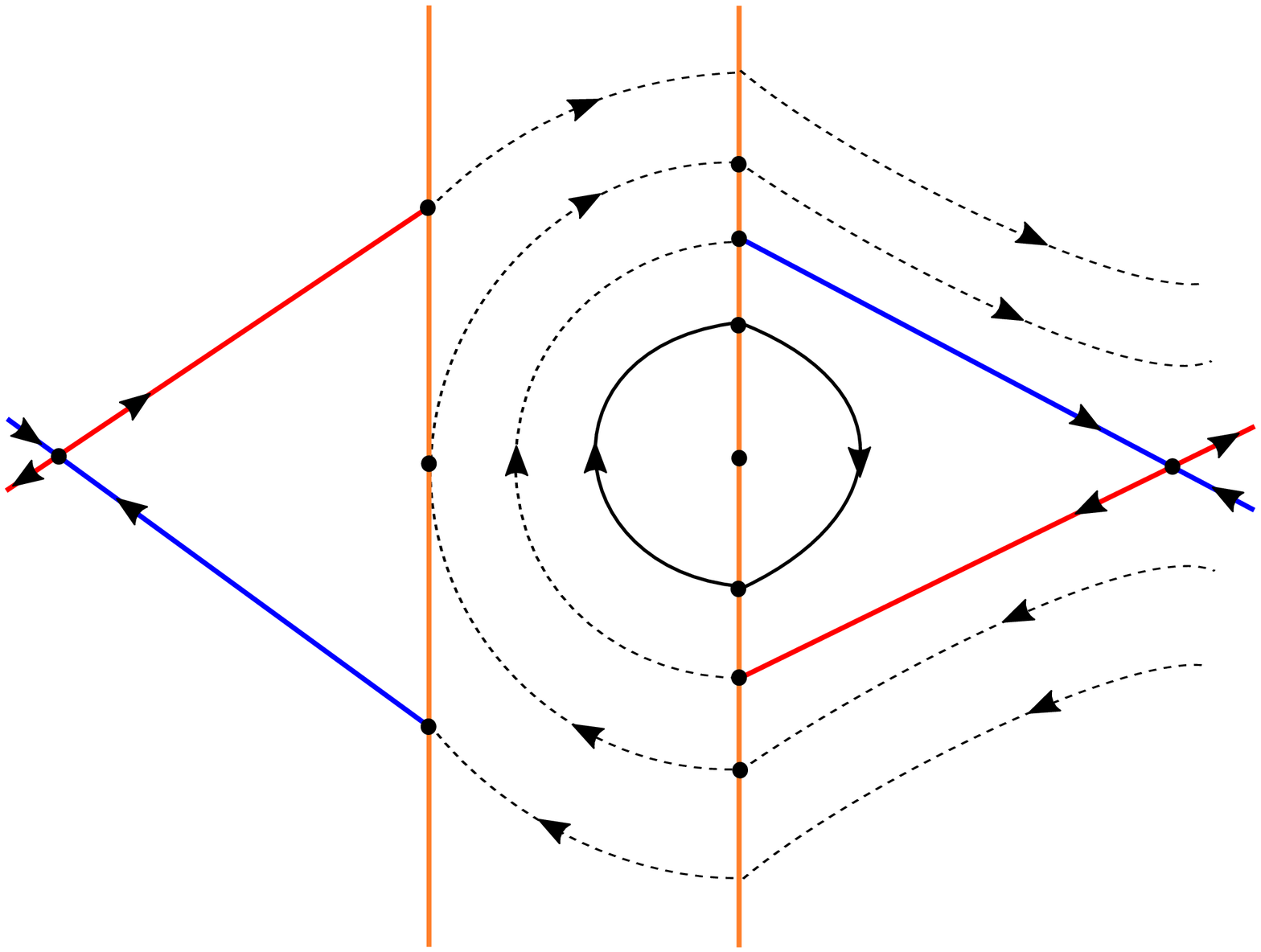}
				%\begin{overpic}[grid,width=0.95\textwidth]{fig1.eps}
				\put(28,-3) {$x=-1$}
				\put(55,-3) {$x=1$}
				\put(59,51) {$ B$}
				\put(59,25) {$ B_1$}
				\put(29,37) {$P_{\scriptscriptstyle L}$}
				\put(60,37) {$P_{\scriptscriptstyle R}$}
				\put(60,57) {$P_{\scriptscriptstyle R}^{s}$}
				\put(60,19) {$P_{\scriptscriptstyle R}^{u}$}
				\put(29,60) {$P_{\scriptscriptstyle L}^{u}$}
				\put(29,15) {$P_{\scriptscriptstyle L}^{s}$}
				\put(60,63) {$P^1_{\scriptscriptstyle R}$}
				\put(60,12) {$P^2_{\scriptscriptstyle R}$}
			\end{overpic}
		\end{center}
		\vspace{0.7cm}
		\caption{Phase portrait of system $\eqref{eq:01}|_{\epsilon=0}$  with  $\tau_{\scriptscriptstyle R}^2/4<\beta_{\scriptscriptstyle C}$ and $\beta_{\scriptscriptstyle C}\geq1$.}\label{fig:11}
	\end{figure} 
	
	Suppose now that $\beta_{\scriptscriptstyle C}<\tau_{\scriptscriptstyle R}^2/4$. Then, less than one reflection around the $y$-axis, we can assume without loss of generality that $\sqrt{\tau_{\scriptscriptstyle R}^2-4\beta_{\scriptscriptstyle C}}\leq\tau_{\scriptscriptstyle L}$. Moreover, if $\sqrt{\tau_{\scriptscriptstyle R}^2-4\beta_{\scriptscriptstyle C}}<\tau_{\scriptscriptstyle L}$ the system $\eqref{eq:01}|_{\epsilon=0}$ has a homoclinic loop passing through the points $P_{\scriptscriptstyle R}^{s}$, $P_{\scriptscriptstyle R}^{u}$, $P^1_{\scriptscriptstyle L}$ and $P^2_{\scriptscriptstyle L}$ (see Fig. \ref{fig:03}--\ref{fig:08}) and if $\sqrt{\tau_{\scriptscriptstyle R}^2-4\beta_{\scriptscriptstyle C}}=\tau_{\scriptscriptstyle L}$  the system $\eqref{eq:01}|_{\epsilon=0}$ has a heteroclinic orbit passing through the points $P_{\scriptscriptstyle R}^{s}$, $P_{\scriptscriptstyle R}^{u}$, $P_{\scriptscriptstyle L}^{s}$ and $P_{\scriptscriptstyle L}^{u}$ (see Fig. \ref{fig:04}--\ref{fig:09}).
	
	\begin{figure}[h]
		\begin{center}		
			\begin{overpic}[width=4.5in]{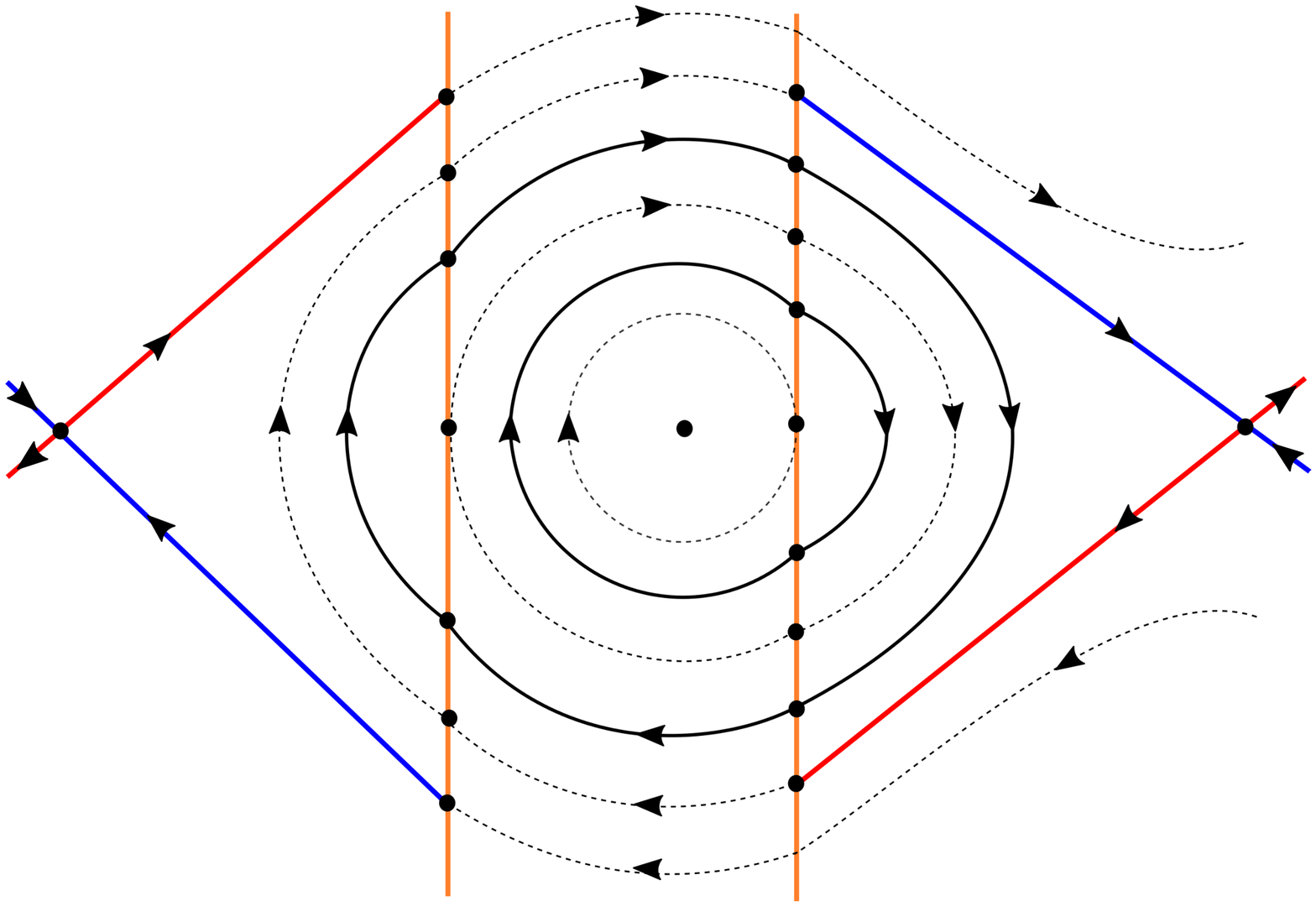}
				%\begin{overpic}[grid,width=0.95\textwidth]{fig1.eps}
				\put(58,-2) {$x=1$}
				\put(30,-2) {$x=-1$}
				\put(61.5,57) {$ A$}
				\put(61.5,13) {$ A_1$}
				\put(30,20) {$ A_2$}
				\put(30,50) {$ A_3$}
				\put(61.5,46) {$ B$}
				\put(62,25) {$ B_1$}
				\put(62,51) {$P^1_{\scriptscriptstyle R}$}
				\put(62,19) {$P^2_{\scriptscriptstyle R}$}
				\put(30,56.5) {$P^1_{\scriptscriptstyle L}$}
				\put(30,12) {$P^2_{\scriptscriptstyle L}$}
				\put(30,36) {$P_{\scriptscriptstyle L}$}
				\put(62.5,36) {$P_{\scriptscriptstyle R}$}
				\put(61.5,62) {$P_{\scriptscriptstyle R}^{s}$}
				\put(61.5,7.5) {$P_{\scriptscriptstyle R}^{u}$}
				\put(29.5,62) {$P_{\scriptscriptstyle L}^{u}$}
				\put(29.5,6) {$P_{\scriptscriptstyle L}^{s}$}
			\end{overpic}
		\end{center}
		\vspace{0.7cm}
		\caption{Phase portrait of system $\eqref{eq:01}|_{\epsilon=0}$ with $0<\beta_{\scriptscriptstyle C}<1$, $\beta_{\scriptscriptstyle C}<\tau_{\scriptscriptstyle R}^2/4$ and $\sqrt{\tau_{\scriptscriptstyle R}^2-4\beta_{\scriptscriptstyle C}}<\tau_{\scriptscriptstyle L}$.}\label{fig:03}
	\end{figure} 
	\begin{figure}[h]
		\begin{center}		
			\begin{overpic}[width=4.5in]{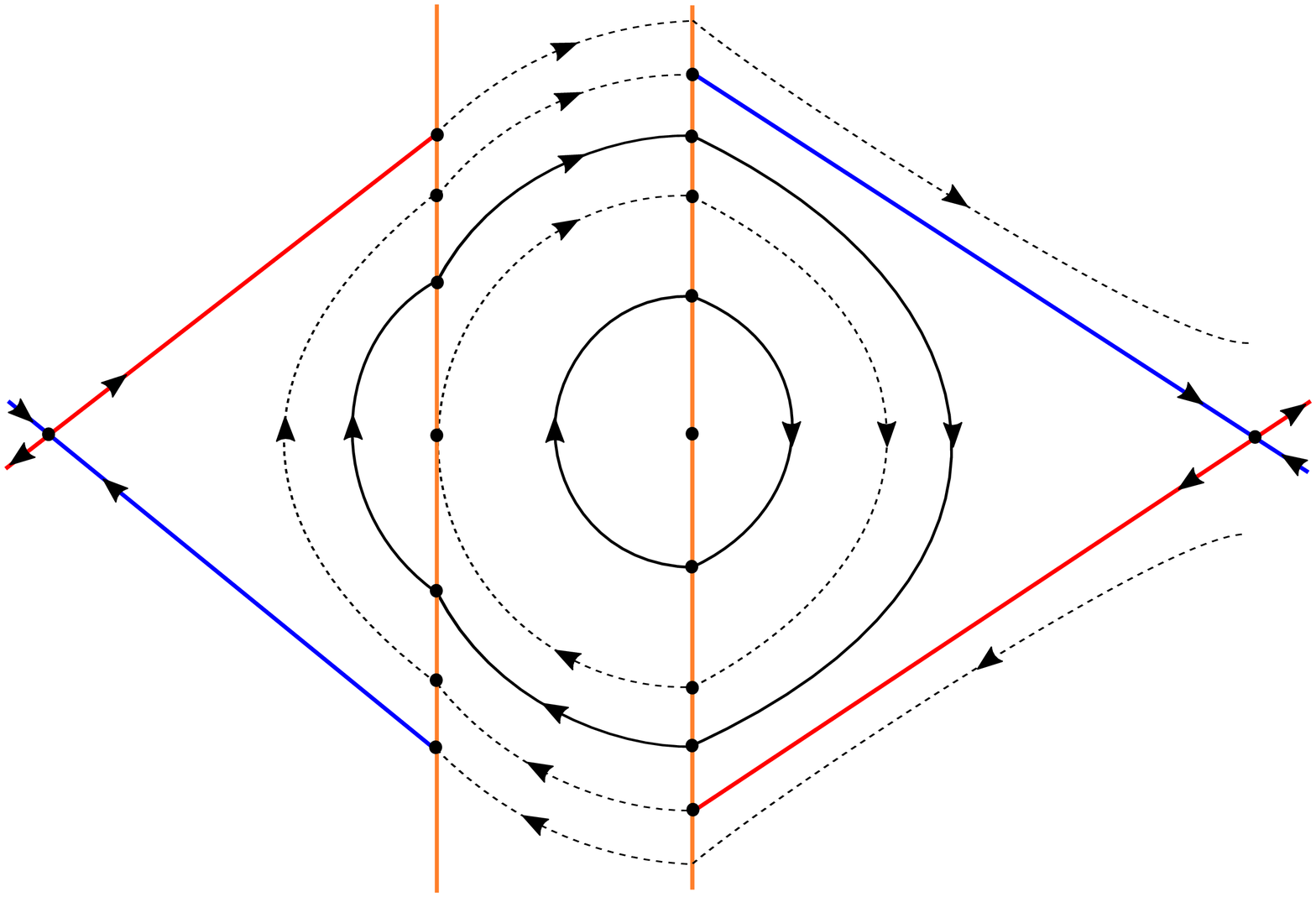}
				%\begin{overpic}[grid,width=0.95\textwidth]{fig1.eps}
				\put(49,-3) {$x=1$}
				\put(27,-3) {$x=-1$}
				\put(53,58) {$A$}
				\put(53,9) {$A_1$}
				\put(28,22) {$A_2$}
				\put(28,47) {$A_3$}
				\put(53,46) {$B$}
				\put(53,22) {$B_1$}
				\put(53,53) {$P^1_{\scriptscriptstyle R}$}
				\put(53,14) {$P^2_{\scriptscriptstyle R}$}
				\put(28,53) {$P^1_{\scriptscriptstyle L}$}
				\put(28,14) {$P^2_{\scriptscriptstyle L}$}
				\put(28,35) {$P_{\scriptscriptstyle L}$}
				\put(53.5,35) {$P_{\scriptscriptstyle R}$}
				\put(53,63) {$P_{\scriptscriptstyle R}^{s}$}
				\put(53,4) {$P_{\scriptscriptstyle R}^{u}$}
				\put(28,59) {$P_{\scriptscriptstyle L}^{u}$}
				\put(28,8) {$P_{\scriptscriptstyle L}^{s}$}
			\end{overpic}
		\end{center}
		\vspace{0.7cm}
		\caption{Phase portrait of system $\eqref{eq:01}|_{\epsilon=0}$ with  $1\leq\beta_{\scriptscriptstyle C}<\tau_{\scriptscriptstyle R}^2/4$ and  $\sqrt{\tau_{\scriptscriptstyle R}^2-4\beta_{\scriptscriptstyle C}}<\tau_{\scriptscriptstyle L}$.}\label{fig:08}
	\end{figure} 
	\begin{figure}[h]
		\begin{center}		
			\begin{overpic}[width=4.5in]{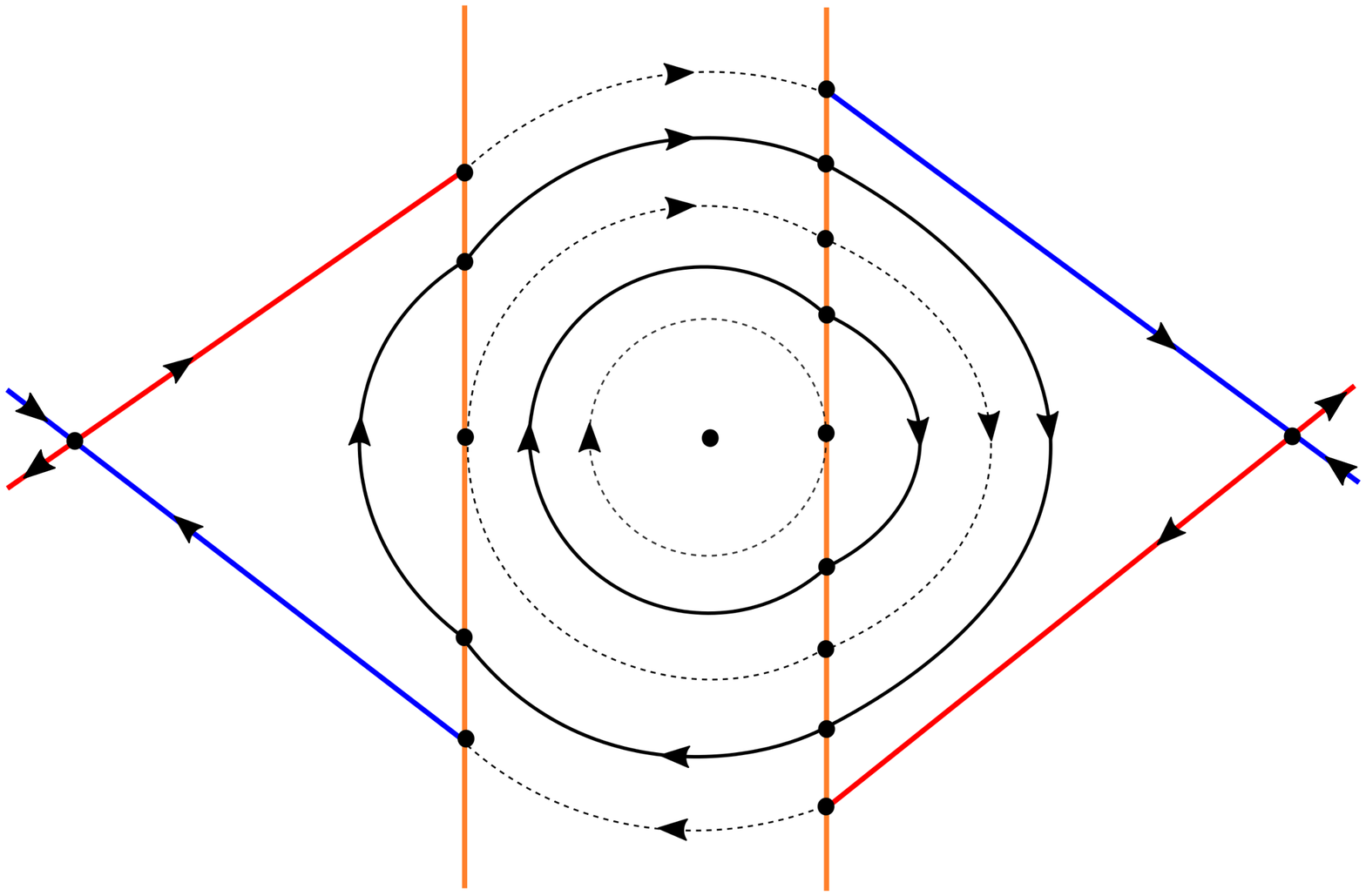}
				%\begin{overpic}[grid,width=0.95\textwidth]{fig1.eps}
				\put(58,-2) {$x=1$}
				\put(30,-2) {$x=-1$}
				\put(61.5,54) {$ A$}
				\put(61.5,11) {$ A_1$}
				\put(30,17) {$ A_2$}
				\put(30,48) {$ A_3$}
				\put(61.5,43) {$ B$}
				\put(61.5,22.5) {$ B_1$}
				\put(62,48.5) {$P^1_{\scriptscriptstyle R}$}
				\put(61.5,16.5) {$P^2_{\scriptscriptstyle R}$}
				\put(29.5,34) {$P_{\scriptscriptstyle L}$}
				\put(61.5,34) {$P_{\scriptscriptstyle R}$}
				\put(61.5,60) {$P_{\scriptscriptstyle R}^{s}$}
				\put(61.5,5) {$P_{\scriptscriptstyle R}^{u}$}
				\put(29.5,54) {$P_{\scriptscriptstyle L}^{u}$}
				\put(29.5,9) {$P_{\scriptscriptstyle L}^{s}$}	
			\end{overpic}
		\end{center}
		\vspace{0.7cm}
		\caption{Phase portrait of system $\eqref{eq:01}|_{\epsilon=0}$ with $0<\beta_{\scriptscriptstyle C}<1$, $\beta_{\scriptscriptstyle C}<\tau_{\scriptscriptstyle R}^2/4$ and $\sqrt{\tau_{\scriptscriptstyle R}^2-4\beta_{\scriptscriptstyle C}}=\tau_{\scriptscriptstyle L}$.}\label{fig:04}
	\end{figure} 
	\begin{figure}[h]
		\begin{center}		
			\begin{overpic}[width=4.5in]{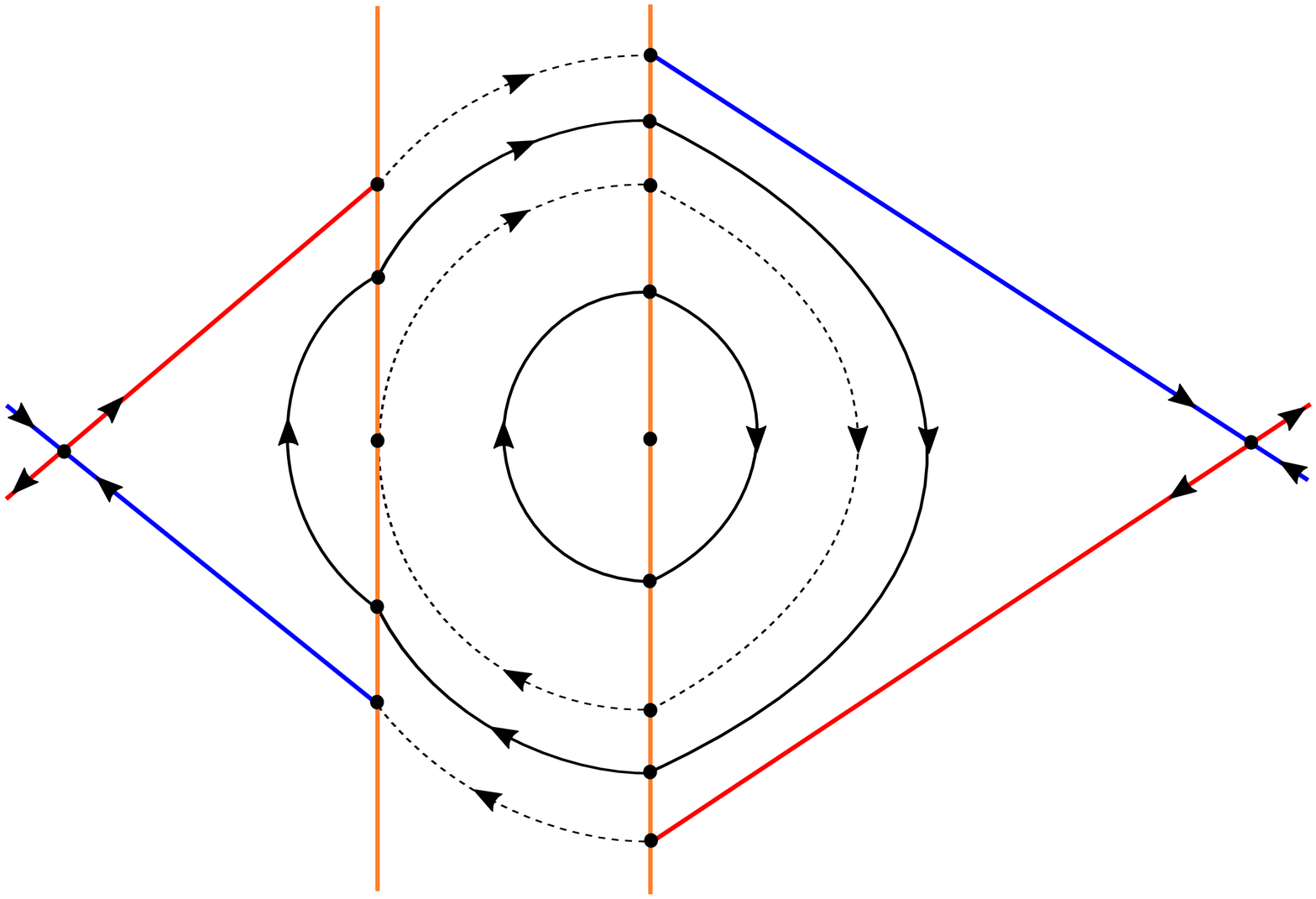}
				%\begin{overpic}[grid,width=0.95\textwidth]{fig1.eps}
				\put(46,-3) {$x=1$}
				\put(24,-3) {$x=-1$}
				\put(50,60) {$A$}
				\put(50,7.5) {$A_1$}
				\put(24,21) {$A_2$}
				\put(24,48) {$A_3$}
				\put(50,47) {$B$}
				\put(50,22) {$B_1$}
				\put(50,55) {$P^1_{\scriptscriptstyle R}$}
				\put(50,13) {$P^2_{\scriptscriptstyle R}$}
				\put(24,35) {$P_{\scriptscriptstyle L}$}
				\put(50.5,35) {$P_{\scriptscriptstyle R}$}
				\put(50,65) {$P_{\scriptscriptstyle R}^{s}$}
				\put(50,2) {$P_{\scriptscriptstyle R}^{u}$}
				\put(24,56) {$P_{\scriptscriptstyle L}^{u}$}
				\put(24,12) {$P_{\scriptscriptstyle L}^{s}$}
			\end{overpic}
		\end{center}
		\vspace{0.7cm}
		\caption{Phase portrait of system $\eqref{eq:01}|_{\epsilon=0}$ with  $1\leq\beta_{\scriptscriptstyle C}<\tau_{\scriptscriptstyle R}^2/4$ and $\sqrt{\tau_{\scriptscriptstyle R}^2-4\beta_{\scriptscriptstyle C}}=\tau_{\scriptscriptstyle L}$.}\label{fig:09}
	\end{figure} 
	
	Consider a initial point of the form $A(h)=(1,h)$, with $h\in (2\sqrt{\beta_{\scriptscriptstyle C}},\tau_{\scriptscriptstyle R})$. By the hypothesis (H2), the system $\eqref{eq:01}|_{\epsilon=0}$ has a family of crossing periodic orbits that intersects the straight lines $x=\pm1$ at four points, $A(h)$, $A_1(h)=(1,a_1(h))$, with $a_1(h)<h$, and $A_2(h)=(-1,a_2(h))$, $A_3(h)=(-1,a_3(h))$, with $a_2(h)<a_3(h)$ satisfying
	\begin{equation*}
		\begin{aligned}
			& H^{\scriptscriptstyle R}(A(h))=H^{\scriptscriptstyle R}(A_1(h)), \\
			& H^{\scriptscriptstyle C}(A_1(h))=H^{\scriptscriptstyle C}(A_2(h)), \\
			& H^{\scriptscriptstyle L}(A_2(h))=H^{\scriptscriptstyle L}(A_3(h)), \\
			& H^{\scriptscriptstyle C}(A_3(h))=H^{\scriptscriptstyle C}(A(h)), 
		\end{aligned}
	\end{equation*}
	where $H^{\scriptscriptstyle R}$,  $H^{\scriptscriptstyle C}$ and  $H^{\scriptscriptstyle L}$ are given by normal form from Proposition \ref{fn:01}. More precisely, we have the equations
	\begin{equation*}
		\begin{aligned}
			& \frac{b_{\scriptscriptstyle R}}{2}(h-a_1(h))(h+a_1(h))=0, \\
			& \frac{1}{2}((a_1(h)-a_2(h))(a_1(h)+a_2(h))-4\beta_{\scriptscriptstyle C})=0,
			\end{aligned}
		\end{equation*}
	\begin{equation*}
		\begin{aligned}
			& \frac{b_{\scriptscriptstyle L}}{2}(a_2(h)-a_3(h))(a_2(h)+a_3(h))=0, \\
			& \frac{1}{2}((a_3(h)-h)(a_3(h)+h)+4\beta_{\scriptscriptstyle C})=0. 
		\end{aligned}
	\end{equation*}
	As $a_1(h)<h$, $a_2(h)<a_3(h)$, $b_{\scriptscriptstyle R}>0$ and $b_{\scriptscriptstyle L}>0$, the only solution of system above is $a_1(h)=-h$, $a_2(h)=-\sqrt{h^2-4\beta_{\scriptscriptstyle C}}$ and $a_3(h)=\sqrt{h^2-4\beta_{\scriptscriptstyle C}}$, i.e. we have the four points given by $A(h)=(1,h)$, $A_1(h)=(1,-h)$, $A_2(h)=(-1,-\sqrt{h^2-4\beta_{\scriptscriptstyle C}})$ and $A_3(h)=(-1,\sqrt{h^2-4\beta_{\scriptscriptstyle C}})$. Moreover, system  $\eqref{eq:01}|_{\epsilon=0}$ has a periodic orbit $L_h$ passing through these points, for all $h\in(2\sqrt{\beta_{\scriptscriptstyle C}},\tau_{\scriptscriptstyle R})$. 
	
	Now, consider a initial point of the form $B(h)=(1,h)$, with $h\in (0,2\sqrt{\beta_{\scriptscriptstyle C}})$. By hypotheses (H2), the system $\eqref{eq:01}|_{\epsilon=0}$ has a family of crossing periodic orbits that intersects the straight line $x=1$ at two points, $B(h)$ and $B_1(h)=(1,b_1(h))$, with $b_1(h)<h$ satisfying
	\begin{equation*}
		\begin{aligned}
			& H^{\scriptscriptstyle R}(B(h))=H^{\scriptscriptstyle R}(B_1(h)), \\
			& H^{\scriptscriptstyle C}(B_1(h))=H^{\scriptscriptstyle C}(B(h)).
		\end{aligned}
	\end{equation*}
	More precisely, we have the equations 
	\begin{equation*}
		\begin{aligned}
			& \frac{b_{\scriptscriptstyle R}}{2}(h-b_1(h))(h+b_1(h))=0, \\
			& \frac{1}{2}(b_1(h)-h)(b_1(h)+h)=0.
		\end{aligned}
	\end{equation*}
	As $b_1(h)<h$ and $b_{\scriptscriptstyle R}>0$, the only solution of system above is $b_1(h)=-h$, i.e. we have the two points given by $B(h)=(1,h)$ and $B_1(h)=(1,-h)$. Moreover, system  $\eqref{eq:01}|_{\epsilon=0}$ has a periodic orbit $L_h$ passing through these points, for all $h\in(0,2\sqrt{\beta_{\scriptscriptstyle C}})$. If $h\in[\tau_{\scriptscriptstyle R},\infty)$ then the orbit of system $\eqref{eq:01}|_{\epsilon=0}$ with initial condition in $A(h)$ do not return to straight line $x=1$ to positive times, i.e. the system $\eqref{eq:01}|_{\epsilon=0}$ has no periodic orbit passing thought the point $A(h)$. Therefore, the system $\eqref{eq:01}|_{\epsilon=0}$ has a periodic annulus, formed by the periodic orbits $L_h$, for $h\in(0,2\sqrt{\beta_{\scriptscriptstyle C}})$, passing by two zones and  bounded by the periodic orbit $L_{0}^1$, when $\beta_{\scriptscriptstyle C}\geq1$, or the two periodic orbits $L_{0}^1$ and $ L_{0}^2$, when $0<\beta_{\scriptscriptstyle C}<1$. Now, assuming that $\beta_{\scriptscriptstyle C}<\tau_{\scriptscriptstyle R}^2/4$, the system $\eqref{eq:01}|_{\epsilon=0}$ has a periodic annulus, formed by the periodic orbits $L_h$, for $h\in(2\sqrt{\beta_{\scriptscriptstyle C}}, \tau_{\scriptscriptstyle R})$, passing by three zones and bounded by the periodic orbit $ L_{0}^1$ and a homoclinic loop when $\sqrt{\tau_{\scriptscriptstyle R}^2-4\beta_{\scriptscriptstyle C}}<\tau_{\scriptscriptstyle L}$ or a heteroclinic orbit when $\sqrt{\tau_{\scriptscriptstyle R}^2-4\beta_{\scriptscriptstyle C}}=\tau_{\scriptscriptstyle L}$. This complete the proof.
	%	Therefore, assuming that $\beta_{\scriptscriptstyle C}<\tau_{\scriptscriptstyle R}^2/4$, if $h\in(0,2\sqrt{\beta_{\scriptscriptstyle C}})$ the system $\eqref{eq:01}|_{\epsilon=0}$ has a periodic annulus, formed by the periodic orbits $L_h$ passing by two zones, bounded by the periodic orbit $L_{0}^1$, if $\beta_{\scriptscriptstyle C}\geq1$, or the two periodic orbits $L_{0}^1$ and $ L_{0}^2$, if $0<\beta_{\scriptscriptstyle C}<1$, and if $h\in(2\sqrt{\beta_{\scriptscriptstyle C}}, \tau_{\scriptscriptstyle R})$ the system $\eqref{eq:01}|_{\epsilon=0}$ has a periodic annulus, formed by the periodic orbits $L_h$ passing by three zones, bounded by the periodic orbit $ L_{0}^1$ and a homoclinic loop when $\sqrt{\tau_{\scriptscriptstyle R}^2-4\beta_{\scriptscriptstyle C}}<\tau_{\scriptscriptstyle L}$ or a heteroclinic orbit when $\sqrt{\tau_{\scriptscriptstyle R}^2-4\beta_{\scriptscriptstyle C}}=\tau_{\scriptscriptstyle L}$. This complete the proof.
\end{proof}

%%%%%%%%%%%%%%%%%%%%%%%%%%%%%%%%%%%%%%%%%%%%%%%%%%%%%%%%%%%%%%%%%%%%%%%%%%%%%%%%%%%%%%%%%%%%

As, for $h\in (2\sqrt{\beta_{\scriptscriptstyle C}},\tau_{\scriptscriptstyle R})$, $A(h)=(1,h)$, $A_1(h)=(1,-h)$, $A_2(h)=(-1,-\sqrt{h^2-4\beta_{\scriptscriptstyle C}})$,  $A_3(h)=(-1,\sqrt{h^2-4\beta_{\scriptscriptstyle C}})$ and, for $h\in (0,2\sqrt{\beta_{\scriptscriptstyle C}})$, $B(h)=(1,h)$, $B_1(h)=(1,-h)$, we have the follow immediate corollary. 

\begin{corollary}\label{col:mel1}
	For $J_0=(2\sqrt{\beta_{\scriptscriptstyle C}},\tau_{\scriptscriptstyle R})$ and  $J_1=(0,2\sqrt{\beta_{\scriptscriptstyle C}})$, the interval of definition of Melnikov functions given by \eqref{eq:mel0} and  \eqref{eq:mel1}, respectively, we have that
	$$
	\frac{H_y^{\scriptscriptstyle R}(A)}{H_y^{\scriptscriptstyle C}(A)}=\frac{H_y^{\scriptscriptstyle R}(B)}{H_y^{\scriptscriptstyle C}(B)}=\frac{H_y^{\scriptscriptstyle R}(A)H_y^{\scriptscriptstyle C}(A_3)H_y^{\scriptscriptstyle L}(A_2)}{H_y^{\scriptscriptstyle C}(A)H_y^{\scriptscriptstyle L}(A_3)H_y^{\scriptscriptstyle C}(A_2)}=b_{\scriptscriptstyle R},\quad \frac{H_y^{\scriptscriptstyle R}(A)H_y^{\scriptscriptstyle C}(A_3)}{H_y^{\scriptscriptstyle C}(A)H_y^{\scriptscriptstyle L}(A_3)}=\frac{b_{\scriptscriptstyle R}}{b_{\scriptscriptstyle L}},
	$$
	and
	$$\frac{H_y^{\scriptscriptstyle R}(B)H_y^{\scriptscriptstyle C}(B_1)}{H_y^{\scriptscriptstyle C}(B)H_y^{\scriptscriptstyle L}(B_1)}=\frac{H_y^{\scriptscriptstyle R}(A)H_y^{\scriptscriptstyle C}(A_3)H_y^{\scriptscriptstyle L}(A_2)H_y^{\scriptscriptstyle C}(A_1)}{H_y^{\scriptscriptstyle C}(A)H_y^{\scriptscriptstyle L}(A_3)H_y^{\scriptscriptstyle C}(A_2)H_y^{\scriptscriptstyle R}(A_1)}=1.
	$$
	Then, the first order Melnikov functions associated to system \eqref{eq:01} can be rewritten as
	\begin{equation}\label{eq:mel00}
		\begin{aligned}
			& M_0(h) = b_{\scriptscriptstyle R}\int_{\widehat{A_3A}}g_{\scriptscriptstyle C}dx-f_{\scriptscriptstyle C}dy+\frac{b_{\scriptscriptstyle R}}{b_{\scriptscriptstyle L}} \int_{\widehat{A_2A_3}}g_{\scriptscriptstyle L}dx-f_{\scriptscriptstyle L}dy \\
			& \quad\quad\quad \,\,\,\, + b_{\scriptscriptstyle R}\int_{\widehat{A_1A_2}}g_{\scriptscriptstyle C}dx-f_{\scriptscriptstyle C}dy+\int_{\widehat{AA_1}}g_{\scriptscriptstyle R}dx-f_{\scriptscriptstyle R}dy
		\end{aligned}
	\end{equation}
	and
	\begin{equation}\label{eq:mel11}
		M_1(h) = b_{\scriptscriptstyle R}\int_{\widehat{B_1B}}g_{\scriptscriptstyle C}dx-f_{\scriptscriptstyle C}dy+\int_{\widehat{BB_1}}g_{\scriptscriptstyle R}dx-f_{\scriptscriptstyle R}dy.
	\end{equation}
\end{corollary}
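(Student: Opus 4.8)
The plan is to reduce the statement to an explicit evaluation of the partial derivatives $H^{\scriptscriptstyle R}_y$, $H^{\scriptscriptstyle C}_y$, $H^{\scriptscriptstyle L}_y$ along the switching lines $x=\pm1$, using the normal form of Proposition~\ref{fn:01} together with the coordinates of the points $A,A_1,A_2,A_3,B,B_1$ determined in the previous proposition. First I would record that in the normal form $H^{\scriptscriptstyle C}_y(x,y)=y$, since $b_{\scriptscriptstyle C}=1$ and $a_{\scriptscriptstyle C}=\alpha_{\scriptscriptstyle C}=0$, while $H^{\scriptscriptstyle R}_y(x,y)=b_{\scriptscriptstyle R}y+a_{\scriptscriptstyle R}(x-1)$ and $H^{\scriptscriptstyle L}_y(x,y)=b_{\scriptscriptstyle L}y+a_{\scriptscriptstyle L}(x+1)$ because $\alpha_{\scriptscriptstyle R}=-a_{\scriptscriptstyle R}$ and $\alpha_{\scriptscriptstyle L}=a_{\scriptscriptstyle L}$. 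Evaluating on the switching lines kills the $x$-dependent terms, so $H^{\scriptscriptstyle R}_y(1,y)=b_{\scriptscriptstyle R}y$, $H^{\scriptscriptstyle L}_y(-1,y)=b_{\scriptscriptstyle L}y$ and $H^{\scriptscriptstyle C}_y(\pm1,y)=y$.

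Next I would substitute the coordinates $A=(1,h)$, $A_1=(1,-h)$, $A_2=(-1,-\sqrt{h^2-4\beta_{\scriptscriptstyle C}})$, $A_3=(-1,\sqrt{h^2-4\beta_{\scriptscriptstyle C}})$ for $h\in J_0$, and $B=(1,h)$, $B_1=(1,-h)$ for $h\in J_1$. The decisive feature is that each return point has $y$-coordinate equal to minus that of the corresponding starting point, which forces the quotients in \eqref{eq:mel0}--\eqref{eq:mel1} to telescope: one gets $H^{\scriptscriptstyle R}_y(A)/H^{\scriptscriptstyle C}_y(A)=b_{\scriptscriptstyle R}h/h=b_{\scriptscriptstyle R}$ (and likewise $H^{\scriptscriptstyle R}_y(B)/H^{\scriptscriptstyle C}_y(B)=b_{\scriptscriptstyle R}$); in the triple quotient the two factors $\sqrt{h^2-4\beta_{\scriptscriptstyle C}}$, their signs, $b_{\scriptscriptstyle L}$ and $h$ all cancel, leaving $b_{\scriptscriptstyle R}$; in the double quotient they cancel leaving $b_{\scriptscriptstyle R}/b_{\scriptscriptstyle L}$; in the quadruple quotient all factors of $h$, $\sqrt{h^2-4\beta_{\scriptscriptstyle C}}$, $b_{\scriptscriptstyle L}$ and $b_{\scriptscriptstyle R}$ cancel, leaving $1$; and the $B$-quotient equals $b_{\scriptscriptstyle R}h(-h)/\bigl(h\,(-b_{\scriptscriptstyle R}h)\bigr)=1$ (here the relevant partial at $B_1$ is $H^{\scriptscriptstyle R}_y$, since $B_1$ lies on $x=1$). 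Feeding these constants back into \eqref{eq:mel0} and \eqref{eq:mel1}, and recalling the definitions of $I^{\scriptscriptstyle 0}_{\scriptscriptstyle C},I^{\scriptscriptstyle 0}_{\scriptscriptstyle L},\bar{I}^{\scriptscriptstyle 0}_{\scriptscriptstyle C},I^{\scriptscriptstyle 0}_{\scriptscriptstyle R},I^{\scriptscriptstyle 1}_{\scriptscriptstyle C},I^{\scriptscriptstyle 1}_{\scriptscriptstyle R}$ --- with the arc $\widehat{AA_1}$ (respectively $\widehat{BB_1}$) lying in the right zone, so that the one-form integrated there is $g_{\scriptscriptstyle R}\,dx-f_{\scriptscriptstyle R}\,dy$ --- yields precisely \eqref{eq:mel00} and \eqref{eq:mel11}.

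There is essentially no real obstacle: as the authors indicate, this is an immediate corollary of Theorem~\ref{teo:mel} and the preceding proposition, and the only thing demanding care is sign bookkeeping. One must keep the orientation of each arc consistent with the clockwise passage $A\to A_1\to A_2\to A_3\to A$, assign the negative square root to $A_2$ and the positive one to $A_3$, and use $H^{\scriptscriptstyle R}_y$ (not $H^{\scriptscriptstyle L}_y$) at $B_1$. Once the handful of evaluated partial derivatives is tabulated, each displayed ratio is a one-line cancellation and the rewritten Melnikov functions follow by direct substitution.
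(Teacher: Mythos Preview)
Your proposal is correct and matches the paper's approach: the authors present this as an ``immediate corollary'' with no written proof, and the only way to verify it is precisely the direct evaluation of $H^{\scriptscriptstyle R}_y,H^{\scriptscriptstyle C}_y,H^{\scriptscriptstyle L}_y$ on the switching lines that you carry out. Your observation that the denominator $H^{\scriptscriptstyle L}_y(B_1)$ in the displayed $B$-quotient must be read as $H^{\scriptscriptstyle R}_y(B_1)$ (consistent with \eqref{eq:mel1}, since $B_1=(1,-h)$ lies on $x=1$) is also correct and worth noting.
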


%%%%%%%%%%%%%%%%%%%%%%%%%%%%%%%%%%%%%%%%%%%%%%%%%%%%%%%%%%%%%%%%%%%%%%%%%%%%%%%%%%%%%%%%%%%%

\section{Proof of Theorem \ref{the:01}}\label{sec:Teo}

In order to prove the Theorem \ref{the:01}, we will simplify the expression to the first order Melnikov functions associated to system \eqref{eq:01}. For this, we define the follows functions: 
\begin{equation}\label{eq:func}
	\begin{aligned}
		f_1(h) & =\sqrt{h^2-4\beta_{\scriptscriptstyle C}} ,\\	
		f_2(h) & =-\bigg(\frac{h^2+2-hf_1(h)-2\beta_{\scriptscriptstyle C}}{h^2+(\beta_{\scriptscriptstyle C}-1)^2}\bigg),\\
		%f_3(h) & =-\bigg(\frac{h^2+hf_1(h)-2\beta_{\scriptscriptstyle C}+2\beta_{\scriptscriptstyle C}^2}{h^2+(\beta_{\scriptscriptstyle C}-1)^2}\bigg) ,\\
		f_3(h) & =\sqrt{1-\bigg(\frac{h f_1(h)-1+\beta_{\scriptscriptstyle C}^2}{h^2+(\beta_{\scriptscriptstyle C}-1)^2}\bigg)^2} ,\\	
		f_4(h) & =\bigg(\frac{hf_1(h)-1+\beta_{\scriptscriptstyle C}^2}{h^2+(\beta_{\scriptscriptstyle C}-1)^2}\bigg) ,\\
		f_0^{\scriptscriptstyle R}(h) & = (h-\tau_{\scriptscriptstyle R})(h+\tau_{\scriptscriptstyle R})\log\bigg(\frac{h+\tau_{\scriptscriptstyle R}}{\tau_{\scriptscriptstyle R}-h}\bigg),\\
		f_0^{\scriptscriptstyle L}(h) & = (h^2-4\beta_{\scriptscriptstyle C}-\tau_{\scriptscriptstyle L}^2)\log\bigg(\frac{f_1(h)+\tau_{\scriptscriptstyle L}}{\tau_{\scriptscriptstyle L}-f_1(h)}\bigg),\\
		f_0^{\scriptscriptstyle C}(h) & = (h^2+(\beta_{\scriptscriptstyle C}-1)^2)\arccos(f_4(h)),\\
		f_1^{\scriptscriptstyle C}(h) & = (h^2+(\beta_{\scriptscriptstyle C}-1)^2)\bigg(2\pi - \arccos\bigg(\frac{1-h^2-2\beta_{\scriptscriptstyle C}+\beta_{\scriptscriptstyle C}^2}{h^2+(\beta_{\scriptscriptstyle C}-1)^2}\bigg)\bigg),
	\end{aligned}
\end{equation}	
with $h\in(0,\tau_{\scriptscriptstyle R})$ and $0<\beta_{\scriptscriptstyle C}<\tau_{\scriptscriptstyle R}^2/4$.

\begin{theorem}\label{theo:melfunc}
	The first order Melnikov functions $M_0(h)$ and $M_1(h)$ given by \eqref{eq:mel00} and \eqref{eq:mel11}, respectively, can be expressed as	
	\begin{equation}\label{eq:melfunc0}
		\begin{aligned}
			M_{0}(h)& = k_0^0 h + (k_1^0 + k_1^1h)f_2(h) + (k_2^0 + k_2^1h)f_3(h) + (k_3^0 + k_3^1h + k_3^2h^2)  \\
			& \quad f_3(h)^2 + (k_4^0 + k_4^1h + k_4^2h^2)f_3(h)f_4(h) + k_5^0f_1(h) + k_6^0f_1(h)f_2(h) \\
			& \quad + k_7^0f_1(h)f_3(h)+ k_8^0f_1(h)f_3(h)^2 + k_9^0f_1(h)f_3(h)f_4(h) + k_{10}^0f_0^{\scriptscriptstyle R}(h) \\
			& \quad + k_{11}^0f_0^{\scriptscriptstyle L}(h) + k_{12}^{0}f_0^{\scriptscriptstyle C}(h),
		\end{aligned}
	\end{equation}
	with $h\in (2 \sqrt{\beta_{\scriptscriptstyle C}},\tau_{\scriptscriptstyle R})$, and
	\begin{equation}\label{eq:melfunc1}
		M_{1}(h)= k_{13}^0h + k_{14}^0f_0^{\scriptscriptstyle R}(h) + k_{15}^0f_1^{\scriptscriptstyle C}(h),
	\end{equation}
	with $h\in (0,2 \sqrt{\beta_{\scriptscriptstyle C}})$.
	The functions $f_i,f_0^{\scriptscriptstyle R},f_0^{\scriptscriptstyle L},f_j^{\scriptscriptstyle C}$, $i=1,\dots,4$ and $j=0,1$, are given in \eqref{eq:func}. Here the coefficients $k_i^j$, $i=0,\dots,15$ and $j=0,1,2$, depend on the parameters of system \eqref{eq:01}.
\end{theorem}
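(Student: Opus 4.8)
The plan is to compute each of the six line integrals appearing in \eqref{eq:mel00} and \eqref{eq:mel11} explicitly, using the explicit parametrizations of the arcs of $L_h^0$ and $L_h^1$ that are available once the system is in normal form. First I would record the parametrizations: the central arcs are arcs of circles centered at $(\beta_{\scriptscriptstyle C},0)$ (since $X_{\scriptscriptstyle C}(x,y)=(y,-x+\beta_{\scriptscriptstyle C})$), so a point on a central arc can be written as $(\beta_{\scriptscriptstyle C}+\rho\cos\theta,\rho\sin\theta)$ for the appropriate radius $\rho$ and range of $\theta$; the right and left arcs are arcs of hyperbolic orbits of the saddle subsystems, which can be parametrized through the hyperbolic functions $\cosh$ and $\sinh$ using the eigenvalues $\pm\omega_{\scriptscriptstyle R}$, $\pm\omega_{\scriptscriptstyle L}$. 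Using the endpoint data from the previous proposition, namely $A(h)=(1,h)$, $A_1(h)=(1,-h)$, $A_2(h)=(-1,-f_1(h))$, $A_3(h)=(-1,f_1(h))$ with $f_1(h)=\sqrt{h^2-4\beta_{\scriptscriptstyle C}}$, and $B(h)=(1,h)$, $B_1(h)=(1,-h)$, I can pin down every radius and every angular range as an explicit function of $h$, $\beta_{\scriptscriptstyle C}$, $\tau_{\scriptscriptstyle R}$, $\tau_{\scriptscriptstyle L}$. This is where the auxiliary functions $f_2,f_3,f_4$ will enter: $f_4(h)$ is the cosine of the relevant central angle and $f_3(h)=\sqrt{1-f_4(h)^2}$ its sine, while $f_2(h)$ records the analogous quantity for the other central arc; the $\arccos$ terms in $f_0^{\scriptscriptstyle C}$ and $f_1^{\scriptscriptstyle C}$ are exactly the angular lengths of the two central arcs.

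Next I would substitute $f_{\scriptscriptstyle C},g_{\scriptscriptstyle C},f_{\scriptscriptstyle L},g_{\scriptscriptstyle L},f_{\scriptscriptstyle R},g_{\scriptscriptstyle R}$ from \eqref{eq:02}--\eqref{eq:03} — which are affine in $x,y$ — into the six integrands $g\,dx-f\,dy$ and integrate term by term. Each monomial ($1,x,y,x^2,xy,y^2$ times $dx$ or $dy$) integrated against a circular arc produces a linear combination of: a linear term in the parameters times $h$ or a constant, a term with $\cos\theta\sin\theta$ or $\cos^2\theta$ evaluated at the endpoints (these give the products $f_3f_4$, $f_3^2$, etc.), and a term proportional to the angular length (these give $f_0^{\scriptscriptstyle C}$ and $f_1^{\scriptscriptstyle C}$). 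Similarly, integrating the affine $1$-forms against the hyperbolic arcs in the left and right zones produces polynomial-in-$h$ contributions together with one genuinely transcendental term apiece, the logarithms in $f_0^{\scriptscriptstyle R}$ and $f_0^{\scriptscriptstyle L}$; these come from integrating $dy/y$-type expressions along the saddle separatrix-like orbits, and the arguments $\log\big((h+\tau_{\scriptscriptstyle R})/(\tau_{\scriptscriptstyle R}-h)\big)$ and $\log\big((f_1(h)+\tau_{\scriptscriptstyle L})/(\tau_{\scriptscriptstyle L}-f_1(h))\big)$ are exactly what the hyperbolic parametrization yields at the endpoints $\pm h$ and $\pm f_1(h)$. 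Multiplying by the prefactors $b_{\scriptscriptstyle R}$ and $b_{\scriptscriptstyle R}/b_{\scriptscriptstyle L}$ from Corollary \ref{col:mel1} and collecting like terms, one reads off that every contribution is a linear combination, with coefficients that are polynomial in the system parameters, of the list of functions $h,\ f_2,\ f_3,\ hf_2,\ hf_3,\ f_3^2,\ hf_3^2,\ h^2f_3^2,\ f_3f_4,\ hf_3f_4,\ h^2f_3f_4,\ f_1,\ f_1f_2,\ f_1f_3,\ f_1f_3^2,\ f_1f_3f_4,\ f_0^{\scriptscriptstyle R},\ f_0^{\scriptscriptstyle L},\ f_0^{\scriptscriptstyle C}$ for $M_0$, and of $h,\ f_0^{\scriptscriptstyle R},\ f_1^{\scriptscriptstyle C}$ for $M_1$. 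Defining $k_i^j$ to be the resulting coefficients gives \eqref{eq:melfunc0}--\eqref{eq:melfunc1}; that they depend only on the parameters of \eqref{eq:01} is automatic from the construction.

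The main obstacle is purely computational bookkeeping: there are six integrals, each with up to six monomial pieces, and one must verify that after using the identities $a_1(h)=-h$, $a_2(h)=-f_1(h)$, $a_3(h)=f_1(h)$, $b_1(h)=-h$ and the precise endpoint angles, no function outside the claimed list survives, and in particular that the spurious-looking combinations (e.g. $f_1f_4$, or $hf_1f_4$, or higher powers of $f_3$) either cancel or collapse into the listed ones via $f_3^2+f_4^2=1$ and $f_1^2=h^2-4\beta_{\scriptscriptstyle C}$. I would organize this by handling the two central arcs of $L_h^0$ first (they share the circle-arc structure and differ only in radius and angle), then the right and left hyperbolic arcs, then $M_1$ as the degenerate two-zone analogue; a short table matching each monomial $1$-form to its antiderivative along a circular arc and along a hyperbolic arc makes the collection step mechanical. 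The only subtlety worth flagging explicitly in the writeup is the sign and branch choice in the $\arccos$ and $\log$ terms — these are fixed by the clockwise orientation from (H3) and by the inequalities $0<h<\tau_{\scriptscriptstyle R}$, $0<\beta_{\scriptscriptstyle C}<\tau_{\scriptscriptstyle R}^2/4$ guaranteeing the arguments lie in the valid ranges $[-1,1]$ and $(0,\infty)$ respectively, so that $f_3,f_0^{\scriptscriptstyle R},f_0^{\scriptscriptstyle L},f_0^{\scriptscriptstyle C},f_1^{\scriptscriptstyle C}$ are well defined on the stated intervals.
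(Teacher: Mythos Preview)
Your proposal is correct and follows essentially the same route as the paper: the authors also parametrize each arc by the explicit flow of the unperturbed subsystem (trigonometric for the center, exponential/hyperbolic for the saddles), compute the flight times, substitute the affine perturbations into the line integrals, and collect terms into the list of basis functions, obtaining explicit formulas for the $k_i^j$. One small point worth noting when you carry it out: the two central arcs $\widehat{A_1A_2}$ and $\widehat{A_3A}$ of $L_h^0$ turn out to have the \emph{same} angular length $\arccos(f_4(h))$ (so both contribute to $f_0^{\scriptscriptstyle C}$, not one to $f_0^{\scriptscriptstyle C}$ and one to $f_1^{\scriptscriptstyle C}$ as your phrasing might suggest); $f_1^{\scriptscriptstyle C}$ only enters through the single central arc $\widehat{B_1B}$ of $L_h^1$.
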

\begin{proof}
	Firstly, let is simplify the Melnikov function $ M_{0}(h)$ given by \eqref{eq:mel00}. For this propose, consider the orbit $(x_{\scriptscriptstyle R}(t),y_{\scriptscriptstyle R}(t))$ of system $\eqref{eq:01}|_{\epsilon=0}$, such that $(x_{\scriptscriptstyle R}(0),$ $y_{\scriptscriptstyle R}(0))=(1,h)$, given by
	\begin{equation*}
		\begin{aligned}
			x_{\scriptscriptstyle R}(t)=\,&-\frac{e^{-t \omega_{\scriptscriptstyle R}}}{2\omega_{\scriptscriptstyle R}}\Big(b_{\scriptscriptstyle R} h-b_{\scriptscriptstyle R}e^{2t \omega_{\scriptscriptstyle R}}h-2e^{t \omega_{\scriptscriptstyle R}}\omega_{\scriptscriptstyle R}+b_{\scriptscriptstyle R}\tau_{\scriptscriptstyle R}-2b_{\scriptscriptstyle R}e^{t \omega_{\scriptscriptstyle R}}\tau_{\scriptscriptstyle R} +b_{\scriptscriptstyle R}e^{2t \omega_{\scriptscriptstyle R}}\tau_{\scriptscriptstyle R}\Big), \\
			y_{\scriptscriptstyle R}(t)=\,&-\frac{e^{-t \omega_{\scriptscriptstyle R}}}{2\omega_{\scriptscriptstyle R}}\Big(-a_{\scriptscriptstyle R}h+a_{\scriptscriptstyle R}e^{2t \omega_{\scriptscriptstyle R}}h-\omega_{\scriptscriptstyle R}h- e^{2t \omega_{\scriptscriptstyle R}}\omega_{\scriptscriptstyle R}h-a_{\scriptscriptstyle R}\tau_{\scriptscriptstyle R}+2a_{\scriptscriptstyle R}e^{t \omega_{\scriptscriptstyle R}}\tau_{\scriptscriptstyle R}\\
			&-a_{\scriptscriptstyle R}e^{2t \omega_{\scriptscriptstyle R}}\tau_{\scriptscriptstyle R}-\omega_{\scriptscriptstyle R}\tau_{\scriptscriptstyle R}+e^{2t \omega_{\scriptscriptstyle R}}\omega_{\scriptscriptstyle R}\tau_{\scriptscriptstyle R}\Big).
		\end{aligned}
	\end{equation*}
	The flight time of the orbit $(x_{\scriptscriptstyle R}(t),y_{\scriptscriptstyle R}(t))$, from $A(h)=(1,h)$ to $A_1(h)=(1,-h)$, is 
	$$t_{\scriptscriptstyle R}=\frac{1}{\omega_{\scriptscriptstyle R}}\log\bigg(\frac{h+\tau_{\scriptscriptstyle R}}{\tau_{\scriptscriptstyle R}-h}\bigg).$$
	Now, for $g_{\scriptscriptstyle R}$ and $f_{\scriptscriptstyle R}$ defined in \eqref{eq:02} and \eqref{eq:03}, respectively, we have
	\begin{equation}\label{sys:r}
		\begin{aligned}
			I_{\scriptscriptstyle R}^0&=\int_{\widehat{AA_1}}g_{\scriptscriptstyle R}dx-f_{\scriptscriptstyle R}dy \\
			& =\int_{0}^{t_{\scriptscriptstyle R}}(g_{\scriptscriptstyle R}(x_{\scriptscriptstyle R}(t), y_{\scriptscriptstyle R}(t)) \dot{x}_{\scriptscriptstyle R}(t) - 
			f_{\scriptscriptstyle R}(x_{\scriptscriptstyle R}(t), y_{\scriptscriptstyle R}(t)) \dot{y}_{\scriptscriptstyle R}(t))dt \\
			& = \alpha_1h+\alpha_2f_0^{\scriptscriptstyle R}(h),		
		\end{aligned}
	\end{equation}
	with 
	$$
	\alpha_1 =\frac{1}{\omega_{\scriptscriptstyle R}}\Big(2 (p_{00} + p_{10}) \omega_{\scriptscriptstyle R} + b_{\scriptscriptstyle R} (p_{10} + q_{01}) \tau_{\scriptscriptstyle R}\Big)\quad\text{and}\quad
	\alpha_2  = \frac{b_{\scriptscriptstyle R}}{2\omega_{\scriptscriptstyle R}}\Big(p_{10} + q_{01}\Big).		
	$$
	The orbit $(x_{\scriptscriptstyle C1}(t),y_{\scriptscriptstyle C1}(t))$ of system $\eqref{eq:01}|_{\epsilon=0}$, such that $(x_{\scriptscriptstyle C1}(0),y_{\scriptscriptstyle C1}(0))=(1,-h)$, is given by
	\begin{equation*}
		\begin{aligned}
			x_{\scriptscriptstyle C1}(t)&=\beta_{\scriptscriptstyle C}+(1-\beta_{\scriptscriptstyle C})\cos(t) - h \sin(t), \\
			y_{\scriptscriptstyle C1}(t)&=-h \cos(t) + (\beta_{\scriptscriptstyle C}-1) \sin(t).
		\end{aligned}
	\end{equation*}
	The flight time of the orbit $(x_{\scriptscriptstyle C1}(t),y_{\scriptscriptstyle C1}(t))$, from $A_1(h)=(1,-h)$ to $A_2(h)=(-1,$ $-\sqrt{h^2-4\beta_{\scriptscriptstyle C}})$, is
	$$t_{\scriptscriptstyle C1}=\arccos(f_4(h)).$$
	Now, for $g_{\scriptscriptstyle C}$ and $f_{\scriptscriptstyle C}$ defined in \eqref{eq:02} and \eqref{eq:03}, respectively, we obtain 
	\begin{equation}\label{sys:c1}
		\begin{aligned}
			\bar I_{\scriptscriptstyle C}^0 = & \int_{\widehat{A_1A_2}}g_{\scriptscriptstyle C}dx-f_{\scriptscriptstyle C}dy\\
			=& \int_{0}^{t_{\scriptscriptstyle C1}}\hspace{-0.1cm}(g_{\scriptscriptstyle C}(x_{\scriptscriptstyle C1}(t), y_{\scriptscriptstyle C1}(t)) \dot{x}_{\scriptscriptstyle C1}(t)  - 
			f_{\scriptscriptstyle C}(x_{\scriptscriptstyle C1}(t), y_{\scriptscriptstyle C1}(t)) \dot{y}_{\scriptscriptstyle C1}(t))dt,\\
			=& \,\,(\alpha_{3}+\alpha_{4}h)f_2(h) + (\alpha_{5}+\alpha_{6}h+\alpha_{7}h^2) f_3(h)f_4(h) + (\alpha_{8} + \alpha_{9}h)f_3(h) \\
			& + (\alpha_{10} + \alpha_{11}h + \alpha_{12}h^2)f_3(h)^2 + \alpha_{13}f_0^{\scriptscriptstyle C}(h) ,		
		\end{aligned}
	\end{equation}
	with
	\begin{equation*}
		\begin{aligned}[c]
			&\alpha_{3}  = - (\beta_{\scriptscriptstyle C}-1 ) (v_{00} + v_{10} \beta_{\scriptscriptstyle C}), \\
			&\alpha_{4}  = u_{00} + u_{10} \beta_{\scriptscriptstyle C}, \\ 
			&\alpha_{5}  =\frac{1}{2}\Big(u_{10} - v_{01}\Big) (\beta_{\scriptscriptstyle C}-1 )^2,\\
			&\alpha_{6}  = (u_{01} + v_{10}) (\beta_{\scriptscriptstyle C}-1),\\
			&\alpha_{7}  =\frac{1}{2}\Big( v_{01}-u_{10}\Big),\\
			&\alpha_{8}  = -(u_{00} (\beta_{\scriptscriptstyle C}-1) + u_{10} (\beta_{\scriptscriptstyle C}-1 ) \beta_{\scriptscriptstyle C}),\\
		\end{aligned}
		\quad
		\begin{aligned}[c]
			&\alpha_{9}  = -(v_{00} + v_{10} \beta_{\scriptscriptstyle C}),\\
			&\alpha_{10}  = \frac{1}{2}\Big(-u_{01} - v_{10} (\beta_{\scriptscriptstyle C}-1)^2 + 2 u_{01} \beta_{\scriptscriptstyle C} - u_{01} \beta_{\scriptscriptstyle C}^2\Big),\\
			&\alpha_{11}  = (u_{10} - v_{01}) (\beta_{\scriptscriptstyle C}-1 ),\\
			&\alpha_{12}  = \frac{1}{2}\Big(u_{01} + v_{10}\Big),\\
			&\alpha_{13}  = \frac{1}{2}\Big(u_{10} + v_{01}\Big).
		\end{aligned}
	\end{equation*}
	The orbit $(x_{\scriptscriptstyle L}(t),y_{\scriptscriptstyle L}(t))$ of system $\eqref{eq:01}|_{\epsilon=0}$, such that $(x_{\scriptscriptstyle L}(0),y_{\scriptscriptstyle L}(0))=(-1,-\sqrt{h^2-4\beta_{\scriptscriptstyle C}})$, is given by
	\begin{equation*}
		\begin{aligned}
			x_{\scriptscriptstyle L}(t)=\,&\frac{e^{-t \omega_{\scriptscriptstyle L}}}{2\omega_{\scriptscriptstyle L}}\Big(b_{\scriptscriptstyle L} f_1(h)-b_{\scriptscriptstyle L}e^{2t \omega_{\scriptscriptstyle L}}f_1(h)-2e^{t \omega_{\scriptscriptstyle L}}\omega_{\scriptscriptstyle L}+b_{\scriptscriptstyle L}\tau_{\scriptscriptstyle L}-2b_{\scriptscriptstyle L}e^{t \omega_{\scriptscriptstyle L}}\tau_{\scriptscriptstyle L} +b_{\scriptscriptstyle L}e^{2t \omega_{\scriptscriptstyle L}}\tau_{\scriptscriptstyle L}\Big), \\
			y_{\scriptscriptstyle L}(t)=\,&\frac{e^{-t \omega_{\scriptscriptstyle L}}}{2\omega_{\scriptscriptstyle L}}\Big(-a_{\scriptscriptstyle L}f_1(h)+a_{\scriptscriptstyle L}e^{2t \omega_{\scriptscriptstyle L}}f_1(h)-\omega_{\scriptscriptstyle L}f_1(h)- e^{2t \omega_{\scriptscriptstyle L}}\omega_{\scriptscriptstyle L}f_1(h)-a_{\scriptscriptstyle L}\tau_{\scriptscriptstyle L}+2a_{\scriptscriptstyle L}e^{t \omega_{\scriptscriptstyle L}}\tau_{\scriptscriptstyle L}\\
			& -a_{\scriptscriptstyle L}e^{2t \omega_{\scriptscriptstyle L}}\tau_{\scriptscriptstyle L}-\omega_{\scriptscriptstyle L}\tau_{\scriptscriptstyle L}+e^{2t \omega_{\scriptscriptstyle L}}\omega_{\scriptscriptstyle L}\tau_{\scriptscriptstyle L}\Big).
		\end{aligned}
	\end{equation*}
	The flight time of the orbit $(x_{\scriptscriptstyle L}(t),y_{\scriptscriptstyle L}(t))$, from $A_2(h)=(-1,-\sqrt{h^2-4\beta_{\scriptscriptstyle C}})$ to $A_3(h)=(-1,\sqrt{h^2-4\beta_{\scriptscriptstyle C}})$, is 
	$$t_{\scriptscriptstyle L}=\frac{1}{\omega_{\scriptscriptstyle L}}\log\bigg(\frac{f_1(h)+\tau_{\scriptscriptstyle L}}{\tau_{\scriptscriptstyle L}-f_1(h)}\bigg).$$
	Now, for $g_{\scriptscriptstyle L}$ and $f_{\scriptscriptstyle L}$ defined in \eqref{eq:02} and \eqref{eq:03}, respectively, we have
	\begin{equation}\label{sys:l}
		\begin{aligned}
			I_{\scriptscriptstyle L}^0&=\int_{\widehat{A_2A_3}}g_{\scriptscriptstyle L}dx-f_{\scriptscriptstyle L}dy \\
			& =\int_{0}^{t_{\scriptscriptstyle L}}(g_{\scriptscriptstyle L}(x_{\scriptscriptstyle L}(t), y_{\scriptscriptstyle L}(t)) \dot{x}_{\scriptscriptstyle L}(t) - 
			f_{\scriptscriptstyle L}(x_{\scriptscriptstyle L}(t), y_{\scriptscriptstyle L}(t))\dot{y}_{\scriptscriptstyle L}(t))dt \\
			& = \alpha_{14}f_1(h)+\alpha_{15}f_0^{\scriptscriptstyle L}(h),		
		\end{aligned}
	\end{equation}
	with 
	$$
	\alpha_{14} =\frac{1}{\omega_{\scriptscriptstyle L}}\Big(-2 r_{00} \omega_{\scriptscriptstyle L} + 2 r_{10} \omega_{\scriptscriptstyle L} + b_{\scriptscriptstyle L} (r_{10} + s_{01}) \tau_{\scriptscriptstyle L}\Big)\quad\text{and}\quad
	\alpha_{15}  = \frac{b_{\scriptscriptstyle L}}{2\omega_{\scriptscriptstyle L}}\Big(r_{10} + s_{01}\Big).		
	$$
	The orbit $(x_{\scriptscriptstyle C2}(t),y_{\scriptscriptstyle C2}(t))$ of system $\eqref{eq:01}|_{\epsilon=0}$, such that $(x_{\scriptscriptstyle C2}(0),$ $y_{\scriptscriptstyle C2}(0))=(-1,\sqrt{h^2-4\beta_{\scriptscriptstyle C}})$, is given by
	\begin{equation*}
		\begin{aligned}
			x_{\scriptscriptstyle C2}(t)&=\beta_{\scriptscriptstyle C} - (1 + \beta_{\scriptscriptstyle C})\cos(t) + f_1(h) \sin(t), \\
			y_{\scriptscriptstyle C2}(t)&=f_1(h) \cos(t) + (1 + \beta_{\scriptscriptstyle C})\sin(t).
		\end{aligned}
	\end{equation*}
	The flight time of the orbit $(x_{\scriptscriptstyle C2}(t),y_{\scriptscriptstyle C2}(t))$, from $A_3(h)=(-1,\sqrt{h^2-4\beta_{\scriptscriptstyle C}})$ to $A(h)=(1,h)$, is
	$$t_{\scriptscriptstyle C2}=\arccos(f_4(h)).$$
	Now, for $g_{\scriptscriptstyle C}$ and $f_{\scriptscriptstyle C}$ defined in \eqref{eq:02} and \eqref{eq:03}, respectively, we obtain 
	\begin{equation}\label{sys:c2}
		\begin{aligned}
			I_{\scriptscriptstyle C}^0=&\int_{\widehat{A_3A}}g_{\scriptscriptstyle C}dx-f_{\scriptscriptstyle C}dy \\
			=&\int_{0}^{t_{\scriptscriptstyle C2}}(g_{\scriptscriptstyle C}(x_{\scriptscriptstyle C2}(t), y_{\scriptscriptstyle C2}(t))   \dot{x}_{\scriptscriptstyle C2}(t)  - 
			f_{\scriptscriptstyle C}(x_{\scriptscriptstyle C2}(t), y_{\scriptscriptstyle C2}(t)) \dot{y}_{\scriptscriptstyle C2}(t))dt \\
			= & \,\,\alpha_{16}f_2(h) + \alpha_{17}f_1(h)f_2(h) + \alpha_{18}f_3(h) + \alpha_{19}f_1(h)f_3(h)  \\
			& + (\alpha_{20} + \alpha_{21}h^2)f_3(h)^2+ \alpha_{22}f_1(h)f_3(h) + (\alpha_{23} + \alpha_{24}h^2)\\
			& \,\,f_3(h)f_4(h) + \alpha_{25}f_1(h)f_3(h)f_4(h) + \alpha_{13}f_0^{\scriptscriptstyle C}(h),		
		\end{aligned}
	\end{equation}
	with
	\begin{equation*}
		\begin{aligned}[c]
			&\alpha_{13}  = \frac{1}{2}\Big(u_{10} + v_{01}\Big),\\
			&\alpha_{16}  = - (1 + \beta_{\scriptscriptstyle C}) (v_{00} + v_{10} \beta_{\scriptscriptstyle C}), \\
			&\alpha_{17}  = -(u_{00} + u_{10} \beta_{\scriptscriptstyle C}), \\ 
			&\alpha_{18}  = -(1 + \beta_{\scriptscriptstyle C}) (u_{00} + u_{10} \beta_{\scriptscriptstyle C}),\\
			&\alpha_{19}  = (v_{00} + v_{10} \beta_{\scriptscriptstyle C}),\\
			&\alpha_{20}  =\frac{1}{2}\Big(u_{01} + v_{10}\Big)(-1 - 6 \beta_{\scriptscriptstyle C} - \beta_{\scriptscriptstyle C}^2),
		\end{aligned}
		\quad\quad\quad\quad
		\begin{aligned}[c]
			&\alpha_{21}  =\frac{1}{2}\Big(u_{01} + v_{10}\Big),\\
			&\alpha_{22}  = -(u_{10} - v_{01}) (1 + \beta_{\scriptscriptstyle C}),\\
			&\alpha_{23}  =-\frac{1}{2}\Big(u_{10} - v_{01}\Big)(-1 - 6 \beta_{\scriptscriptstyle C} - \beta_{\scriptscriptstyle C}^2),\\
			&\alpha_{24}  =-\frac{1}{2}\Big(u_{10} - v_{01}\Big),\\
			&\alpha_{25}  = - (u_{01} + v_{10}) (1 + \beta_{\scriptscriptstyle C}).
		\end{aligned}
	\end{equation*}
	Hence, by Corollary \ref{col:mel1}, the first order Melnivov function associated to system \eqref{eq:01} is given by
	\begin{equation}\label{eq:meln00}
		M_{0}(h) = b_{\scriptscriptstyle R} I^{0}_{\scriptscriptstyle C}+\frac{b_{\scriptscriptstyle R}}{b_{\scriptscriptstyle L}}I^{0}_{\scriptscriptstyle L}+b_{\scriptscriptstyle R} \bar I^{0}_{\scriptscriptstyle C}+I^{0}_{\scriptscriptstyle R}. 
	\end{equation}
	Replacing \eqref{sys:r}, \eqref{sys:c1}, \eqref{sys:l} and \eqref{sys:c2} in \eqref{eq:meln00} we obtain \eqref{eq:melfunc0}, with
	\begin{equation*}
		\begin{aligned}[c]
			&k_{0}^{0}  = \alpha_{1}, \\
			&k_{1}^{0}  = b_{\scriptscriptstyle R} (\alpha_{3} + \alpha_{16}), \\
			&k_{1}^{1}  = b_{\scriptscriptstyle R} \alpha_{4}, \\
			&k_{2}^{0}  = b_{\scriptscriptstyle R} (\alpha_{8} + \alpha_{18}), \\
			&k_{2}^{1}  = b_{\scriptscriptstyle R} \alpha_{9}, \\
			&k_{3}^{0}  = b_{\scriptscriptstyle R} (\alpha_{10} + \alpha_{20}),\\
			&k_{3}^{1}  = b_{\scriptscriptstyle R} \alpha_{11}, \\
		\end{aligned}
		\quad\quad\quad
		\begin{aligned}[c]
			&k_{3}^{2}  = b_{\scriptscriptstyle R} (\alpha_{12} + \alpha_{21}), \\
			&k_{4}^{0}  = b_{\scriptscriptstyle R} (\alpha_{5} + \alpha_{23}), \\
			&k_{4}^{1}  = b_{\scriptscriptstyle R} \alpha_{6},\\
			&k_{4}^{2}  = b_{\scriptscriptstyle R} (\alpha_{7} + \alpha_{24}), \\
			&k_{5}^{0}  = \frac{b_{\scriptscriptstyle R}}{b_{\scriptscriptstyle L}} \alpha_{14},\\
			&k_{6}^{0}  = b_{\scriptscriptstyle R} \alpha_{17}, \\	
		\end{aligned}
		\quad\quad\quad
		\begin{aligned}[c]
			&k_{7}^{0}  = b_{\scriptscriptstyle R} \alpha_{19}, \\	
			&k_{8}^{0}  = b_{\scriptscriptstyle R} \alpha_{22}, \\
			&k_{9}^{0}  = b_{\scriptscriptstyle R} \alpha_{25}, \\
			&k_{10}^{0}  =  \alpha_{2}, \\
			&k_{11}^{0}  = \frac{b_{\scriptscriptstyle R}}{b_{\scriptscriptstyle L}} \alpha_{15}, \\
			&k_{12}^{0}  = 2b_{\scriptscriptstyle R} \alpha_{13}.
		\end{aligned}
	\end{equation*}
	Now, let is simplify the Melnikov function $M_1(h)$ given by \eqref{eq:mel11}. Similarly as in previous case, we have that
	\begin{equation}\label{sys:r11}
		\begin{aligned}
			I_{\scriptscriptstyle R}^1&=\int_{\widehat{BB_1}}g_{\scriptscriptstyle R}dx-f_{\scriptscriptstyle R}dy \\
			& =\int_{0}^{t_{\scriptscriptstyle R}}(g_{\scriptscriptstyle R}(x_{\scriptscriptstyle R}(t), y_{\scriptscriptstyle R}(t)) \dot{x}_{\scriptscriptstyle R}(t) - 
			f_{\scriptscriptstyle R}(x_{\scriptscriptstyle R}(t), y_{\scriptscriptstyle R}(t)) \dot{y}_{\scriptscriptstyle R}(t))dt \\
			& = \alpha_1h+\alpha_2f_0^{\scriptscriptstyle R}(h).		
		\end{aligned}
	\end{equation}
	The orbit $(x_{\scriptscriptstyle C}(t),y_{\scriptscriptstyle C}(t))$ of system $\eqref{eq:01}|_{\epsilon=0}$, such that $(x_{\scriptscriptstyle C}(0),y_{\scriptscriptstyle C}(0))=(1,-h)$, is given by
	\begin{equation*}
		\begin{aligned}
			x_{\scriptscriptstyle C}(t)&=\beta_{\scriptscriptstyle C}+(1-\beta_{\scriptscriptstyle C})\cos(t) - h \sin(t), \\
			y_{\scriptscriptstyle C}(t)&=-h \cos(t) + (\beta_{\scriptscriptstyle C}-1) \sin(t).
		\end{aligned}
	\end{equation*}
	The flight time of the orbit $(x_{\scriptscriptstyle C}(t),y_{\scriptscriptstyle C}(t))$, from $B_1(h)=(1,-h)$ to $B(h)=(1,h)$, is
	$$t_{\scriptscriptstyle C}=2\pi - \arccos\bigg(\frac{1-h^2-2\beta_{\scriptscriptstyle C}+\beta_{\scriptscriptstyle C}^2}{h^2+1-2\beta_{\scriptscriptstyle C}+\beta_{\scriptscriptstyle C}^2}\bigg).$$
	Now, for $g_{\scriptscriptstyle C}$ and $f_{\scriptscriptstyle C}$ defined in \eqref{eq:02} and \eqref{eq:03}, respectively, we obtain 
	\begin{equation}\label{sys:c111}
		\begin{aligned}
			I_{\scriptscriptstyle C}^1 = & \int_{\widehat{B_1B}}g_{\scriptscriptstyle C}dx-f_{\scriptscriptstyle C}dy\\
			=& \int_{0}^{t_{\scriptscriptstyle C}}\hspace{-0.1cm}(g_{\scriptscriptstyle C}(x_{\scriptscriptstyle C1}(t), y_{\scriptscriptstyle C1}(t)) \dot{x}_{\scriptscriptstyle C1}(t)  - 
			f_{\scriptscriptstyle C}(x_{\scriptscriptstyle C1}(t), y_{\scriptscriptstyle C1}(t)) \dot{y}_{\scriptscriptstyle C1}(t))dt,\\
			=& \,\,\alpha_{26}h + \alpha_{27}f_1^{\scriptscriptstyle C}(h) ,		
		\end{aligned}
	\end{equation}
	with
	$$\alpha_{26}=2 u_{00} + u_{10} - v_{01} + (u_{10} + v_{01}) \beta_{\scriptscriptstyle C}\quad\text{and}\quad \alpha_{27}=\frac{1}{2}(u_{10} + v_{01}).$$
	Hence, by Corollary \ref{col:mel1}, the first order Melnivov function associated to system \eqref{eq:01} is given by
	\begin{equation}\label{eq:meln11}
		M_{1}(h) = b_{\scriptscriptstyle R} I^{1}_{\scriptscriptstyle C}+I^{1}_{\scriptscriptstyle R}. 
	\end{equation}
	Replacing \eqref{sys:r11} and \eqref{sys:c111} in \eqref{eq:meln11} we obtain \eqref{eq:melfunc1}, with
	\begin{equation*}
		k_{13}^{0}=\alpha_{1} -  b_{\scriptscriptstyle R} \alpha_{26},\quad k_{14}^{0}=\alpha_2\quad\text{and}\quad k_{15}^{0}= b_{\scriptscriptstyle R} \alpha_{27}.
	\end{equation*}
\end{proof}

\medskip

%%%%%%%%%%%%%%%%%%%%%%%%%%%%%%%%%%%%%%%%%%%%%%%%%%%%%%%%%%%%%%%%%%%%%%%%%%%%%%%%%%%%%%%%%%%

Before proving the Theorem \ref{the:01}, we will need the follow result.

\medskip

Consider the functions $F:\mathbb{R}\rightarrow\mathbb{R}$ and  $G:\mathbb{R}\rightarrow\mathbb{R}$ given by 
\begin{equation}\label{eq:m1}
	F(h)=\sum_{j=0}^{n}\Big(C_j(\delta) (h-\tau)^j+D_{2j+1}(\delta) (h-\tau)^{\frac{2j+1}{2}}\Big)+\mathcal{O}((h-\tau)^{n+1}),
\end{equation}
and
\begin{equation}\label{eq:m2}
	G(h)=\sum_{j=0}^{n}C_j(\delta) (h-\tau)^j+\mathcal{O}((h-\tau)^{n+1}),
\end{equation}
with $\tau\geq0$ and the coefficients  $C_j(\delta)$ and $D_{2j+1}(\delta)$, $j=0,\dots,n$, depending on the parameters $\delta=(\delta_1,\dots,\delta_m)\in\mathbb{R}^m$. Then we have the follow proposition.

\begin{proposition}\label{the:coef}  Suppose that there exist an integer $k\geq 1$ and $\tilde{\delta}\in\mathbb{R}^{m}$ with $m\geq 2k+1$ such that 
	\begin{equation}\label{cond:rank1}
		C_{j}(\tilde{\delta})=D_{2j+1}(\tilde{\delta})=0, \quad j=0,\dots,k-1,
	\end{equation}
and $$\textnormal{rank}\,\frac{\partial(C_{0},D_{1}\dots,D_{2k-1},C_{k})}{\partial(\delta_1,\delta_2,\dots,\delta_{m-1},\delta_m)}\tilde{(\delta)}=2k+1$$
	or
	\begin{equation}\label{cond:rank2}
		C_{j}(\tilde{\delta})=D_{2j+1}(\tilde{\delta})=C_{k-1}(\tilde{\delta})=0, \quad j=0,\dots,k-2,
	\end{equation}
and
$$\textnormal{rank}\,\frac{\partial(C_{0},D_{1}\dots,C_{k-1},D_{2k-1})}{\partial(\delta_1,\delta_2,\dots,\delta_{m-1},\delta_{m})}\tilde{(\delta)}=2k.$$
	Then, if condition \eqref{cond:rank1} holds, the functions \eqref{eq:m1} and \eqref{eq:m2} have at least $2k$ and $k$ real  zeros, respectively, in a neighborhood of $h=\tau$ for all $\delta$ near $\tilde{\delta}$. Moreover, the function \eqref{eq:m2} has exactly $k$ real simple zeros, in a neighborhood of $h=\tau$ for all $\delta$ near $\tilde{\delta}$, when $C_{k}(\tilde{\delta})\ne 0$. Now, if condition \eqref{cond:rank2} holds, the functions \eqref{eq:m1} and \eqref{eq:m2} have at least $2k-1$ and $k-1$ real  zeros, respectively, in a neighborhood of $h=\tau$ for all $\delta$ near $\tilde{\delta}$. %Moreover, the function \eqref{eq:m2} has exactly $k-1$ real simple zeros, in a neighborhood of $h=\tau$ for all $\delta$ near $\tilde{\delta}$, when $D_{2k-1}(\tilde{\delta})\ne 0$.
\end{proposition}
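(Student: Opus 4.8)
The plan is to eliminate the fractional powers by the substitution $s=\sqrt{h-\tau}$, legitimate for $h>\tau$, and to look for the zeros of \eqref{eq:m2} on the complementary side $h<\tau$. Writing $E_{2j}(\delta):=C_j(\delta)$ and $E_{2j+1}(\delta):=D_{2j+1}(\delta)$, the function \eqref{eq:m1} becomes, for $h>\tau$, an honest power series $\Phi(s)=\sum_{i=0}^{2n+1}E_i(\delta)s^{i}+\mathcal{O}(s^{2n+2})$ in $s=\sqrt{h-\tau}>0$, while \eqref{eq:m2}, already smooth in $t=h-\tau$, has the property that its truncation in $t$ is exactly the even part (in $s$) of the truncation of $\Phi$. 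After this relabelling, hypothesis \eqref{cond:rank1} reads $E_0(\tilde\delta)=\cdots=E_{2k-1}(\tilde\delta)=0$ and the differential of $\delta\mapsto(E_0,\dots,E_{2k})$ at $\tilde\delta$ is onto (this is where $m\ge 2k+1$ is used), and \eqref{cond:rank2} reads $E_0(\tilde\delta)=\cdots=E_{2k-2}(\tilde\delta)=0$ and the differential of $\delta\mapsto(E_0,\dots,E_{2k-1})$ at $\tilde\delta$ is onto. In both cases this surjectivity lets one prescribe the listed low-order coefficients $E_i$ arbitrarily near their values at $\tilde\delta$, the remaining ones staying bounded by continuity.

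Assume \eqref{cond:rank1}. I would fix reals $0<\alpha_1<\cdots<\alpha_{2k}$ and a nonzero $\mu$ chosen as close to $C_k(\tilde\delta)$ as desired (so $\mu\ne0$ is available even when $C_k(\tilde\delta)=0$); for a small $\lambda>0$ set $a_i=\lambda\alpha_i$ and use the surjectivity to find $\delta=\delta(\lambda)\to\tilde\delta$ with $(E_0(\delta),\dots,E_{2k}(\delta))$ equal to the coefficient vector of $\mu\prod_{i=1}^{2k}(s-a_i)$, possible for $\lambda$ small since that target is close to $(E_0(\tilde\delta),\dots,E_{2k}(\tilde\delta))$. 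Then $\Phi(s)=\mu\prod_{i=1}^{2k}(s-a_i)+\sum_{i>2k}E_i(\delta)s^{i}+\mathcal{O}(s^{2n+2})$, and rescaling $s=\lambda\sigma$ gives $\Phi(\lambda\sigma)=\mu\lambda^{2k}\bigl(\prod_{i=1}^{2k}(\sigma-\alpha_i)+\mathcal{O}(\lambda)\bigr)$ uniformly on compact $\sigma$-sets, so for $\lambda$ small $\Phi$ has $2k$ distinct simple positive zeros near $\lambda\alpha_1,\dots,\lambda\alpha_{2k}$; since $F'(h)=\Phi'(s)/(2s)$ with $s\ne0$, \eqref{eq:m1} then has $2k$ simple zeros in $(\tau,\tau+\varepsilon)$. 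The truncation of \eqref{eq:m2} equals $\mu A(t)$, where $A$ is determined by $A(s^{2})=\tfrac12\bigl(\prod_{i=1}^{2k}(s-a_i)+\prod_{i=1}^{2k}(s+a_i)\bigr)$; evaluating this at $s=i\sqrt{-t}$ for $t<0$ gives $A(t)=\operatorname{Re}\prod_{i=1}^{2k}(a_i-i\sqrt{-t})$. As $-t$ increases over $(0,\infty)$ the argument $-\sum_{i}\arctan(\sqrt{-t}/a_i)$ decreases strictly from $0$ to $-k\pi$, so $A$ vanishes precisely at the $k$ points where it equals $-(2j-1)\pi/2$, $j=1,\dots,k$, each a simple zero; after the rescaling $t=-\lambda^{2}w$ these become $k$ simple zeros of \eqref{eq:m2} in $(\tau-\varepsilon,\tau)$. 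For the last assertion, if $C_k(\tilde\delta)\ne0$ then $G^{(k)}$ stays bounded away from $0$ for $(h,\delta)$ near $(\tau,\tilde\delta)$, so by Rolle's theorem $G$ has at most $k$ zeros in a small neighbourhood of $\tau$ for $\delta$ near $\tilde\delta$; with the $k$ simple zeros just constructed this forces exactly $k$ simple zeros.

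Under \eqref{cond:rank2} the argument is the same with $2k$ replaced by $2k-1$: one prescribes $(E_0(\delta),\dots,E_{2k-1}(\delta))$ to be the coefficient vector of $\mu\prod_{i=1}^{2k-1}(s-a_i)$, producing $2k-1$ simple zeros of \eqref{eq:m1} on $h>\tau$; the pertinent truncation of \eqref{eq:m2} is then, up to a nonzero constant, $\operatorname{Re}\prod_{i=1}^{2k-1}(a_i-i\sqrt{-t})$, whose argument now sweeps an interval of length $(2k-1)\pi/2$, so it has $k-1$ simple negative zeros and \eqref{eq:m2} has $k-1$ simple zeros on $h<\tau$.

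The step I expect to be the main obstacle is \emph{simultaneity}: a priori a perturbation forcing many zeros of \eqref{eq:m1} need not create any zero of \eqref{eq:m2}, yet the proposition asks for both at the same $\delta$. The device above dissolves this: one puts the two functions on opposite sides of $\tau$ and realizes the truncation of $\Phi$ as a single polynomial $\mu\prod(s-a_i)$ with all roots on the positive real axis; the nonobvious fact that this one choice simultaneously forces the prescribed number of zeros of \eqref{eq:m2} is exactly the argument-principle count above, i.e.\ that the even (respectively odd) part of a polynomial with all roots on the positive real axis has the required number of roots on the negative real axis. Everything else is routine rescaling and implicit-function bookkeeping.
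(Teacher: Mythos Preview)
Your argument is correct and takes a genuinely different route from the paper's. The paper proceeds by the classical ``alternating signs'' construction: after using the rank condition to treat $C_0,D_1,\dots,D_{2k-1},C_k$ as independent parameters, it chooses them one at a time with $|C_0|\ll|D_1|\ll\cdots\ll|C_k|$ and alternating signs, producing zeros of $F$ on $h>\tau$ and of $G$ on $h<\tau$ step by step via the intermediate value theorem. Your approach is more global: the substitution $s=\sqrt{h-\tau}$ turns $F$ into a genuine power series $\Phi(s)$, you prescribe the first $2k+1$ coefficients of $\Phi$ to match a scaled product $\mu\prod_{i=1}^{2k}(s-\lambda\alpha_i)$, and a single rescaling argument yields the $2k$ zeros of $F$. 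The key novelty is your treatment of $G$: recognising its truncation as the even part of $\Phi$, and then using the identity $A(t)=\mathrm{Re}\prod(a_i-i\sqrt{-t})$ together with the monotone sweep of the argument from $0$ to $-k\pi$ to count \emph{exactly} $k$ negative zeros of the truncation. This replaces the paper's iterated sign-change bookkeeping by a single argument-principle computation, and it makes the simultaneity of the two zero counts transparent: one choice of polynomial with all positive real roots automatically forces the right number of negative roots for its even part. The paper's method is more elementary (no complex variables) and is the standard device in the Melnikov literature; yours is cleaner, gives simplicity of the zeros for free, and makes the ``exactly $k$'' clause immediate once $C_k(\tilde\delta)\ne0$ via the Rolle bound you state.
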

\begin{proof}
	We will prove only the case where condition \eqref{cond:rank1} holds. The proof is similar when we have the condition \eqref{cond:rank2}. By the condition \eqref{cond:rank1} we can assume that
	$$\textnormal{det}\,\frac{\partial(C_{0},D_{1}\dots,D_{2k-1},C_{k})}{\partial(\delta_1,\delta_2,\dots,\delta_{2k},\delta_{2k+1})}(\tilde{\delta})\ne0,$$
	Then the change of parameters $\tilde{C}_j=C_j(\delta_1,\dots,\delta_{2k+1},\tilde{\delta}_{2k+2},\dots,\tilde{\delta}_{m})$, $j=0,\dots,k$, and $\tilde{D}_{2j+1}=D_{2j+1}(\delta_1,\dots,\delta_{2k+1},\tilde{\delta}_{2k+2},\dots,\tilde{\delta}_{m})$, $j=0,\dots,k-1$, has inverse $\delta_i(\tilde{C}_{0},\tilde{D}_{1},\dots,$ $\tilde{D}_{2k-1},\tilde{C}_{k})$, $i=1,\dots,2k+1$, and we can rewrite \eqref{eq:m1} and \eqref{eq:m2} as
	\begin{equation*}
		\begin{aligned}
			& F(h)=\tilde{C}_0+\tilde{D}_1(h-\tau)^{\frac{1}{2}}+\dots+\tilde{D}_{2k-1}(h-\tau)^{\frac{2k-1}{2}}+\tilde{C}_k(h-\tau)^k+\mathcal{O}((h-\tau)^{k+1}), \\
			& G(h) = \tilde{C}_0+\tilde{C}_1(h-\tau)+\dots+\tilde{C}_k(h-\tau)^k+\mathcal{O}((h-\tau)^{k+1})
		\end{aligned}
	\end{equation*}
	with $\tilde{C}_j(\tilde{\delta})=\tilde{D}_{2j+1}(\tilde{\delta})=0$, $j=0,\dots,k-1$.	
	
	Suppose that $k$ is even (if $k$ is odd the proof is similar) and let $0<|\tilde{C}_k-C_k(\tilde{\delta})|\ll1$, $0<h_{1}^{F}-\tau\ll1$ and $0<\tau-h_{1}^{G}\ll1$ such that $$\tilde{C}_k(h_{1}^{F}-\tau)^k>0\quad\text{and}\quad\tilde{C}_k(h_{1}^{G}-\tau)^k>0.$$ 	
	Take $\tilde{D}_{2k-1}$ such that $|\tilde{D}_{2k-1}|\ll\tilde{C}_{k}$, $\tilde{D}_{2k-1}\tilde{C}_{k}<0$ and 
	$$\tilde{D}_{2k-1}(h_{1}^{F}-\tau)^{\frac{2k-1}{2}}+\tilde{C}_k(h_{1}^{F}-\tau)^k>0.$$ 	
	As $\tilde{D}_{2k-1}<0$, we can choose $h_{2}^{F}$, such that 
	$0<h_{2}^{F}-\tau\ll h_{1}^{F}-\tau\ll 1$ and 
	$$\tilde{D}_{2k-1}(h_{2}^{F}-\tau)^{\frac{2k-1}{2}}+\tilde{C}_k(h_{2}^{F}-\tau)^k<0.$$
	Therefore, the equation   
	$$\tilde{D}_{2k-1}(h-\tau)^{\frac{2k-1}{2}}+\tilde{C}_k(h-\tau)^k=0$$
	has a zero $\tilde{h}_{1}^{F}$, with $h_{2}^{F}<\tilde{h}_{1}^{F}<h_{1}^{F}$. Now, take $\tilde{C}_{k-1}$ such that  $|\tilde{C}_{k-1}|\ll|\tilde{D}_{2k-1}|\ll\tilde{C}_{k}$, $\tilde{C}_{k-1}\tilde{D}_{2k-1}<0$,
	$$\tilde{C}_{k-1}(h_{2}^{F}-\tau)^{k-1}+\tilde{D}_{2k-1}(h_{2}^{F}-\tau)^{\frac{2k-1}{2}}+\tilde{C}_k(h_{2}^{F}-\tau)^k<0$$
	and
	$$\tilde{C}_{k-1}(h_{1}^{G}-\tau)^{k-1}+\tilde{C}_k(h_{1}^{G}-\tau)^k>0.$$
	As $\tilde{C}_{k-1}>0$, we can choose  $h_{3}^{F}$ and $h_{2}^{G}$, such that
	$0<h_{3}^{F}-\tau\ll h_{2}^{F}-\tau\ll 1$, $0<\tau-h_{2}^{G}\ll \tau-h_{1}^{G}\ll 1$,
	$$\tilde{C}_{k-1}(h_{3}^{F}-\tau)^{k-1}+\tilde{D}_{2k-1}(h_{3}^{F}-\tau)^{\frac{2k-1}{2}}+\tilde{C}_k(h_{3}^{F}-\tau)^k>0$$
	and
	$$\tilde{C}_{k-1}(h_{2}^{G}-\tau)^{k-1}+\tilde{C}_k(h_{2}^{G}-\tau)^k<0.$$
	Therefore, the equation 
	$$\tilde{C}_{k-1}(h-\tau)^{k-1}+\tilde{D}_{2k-1}(h-\tau)^{\frac{2k-1}{2}}+\tilde{C}_k(h-\tau)^k=0$$
	has one more zero $\tilde{h}_{2}^{F}$, with $h_{3}^{F}<\tilde{h}_{2}^{F}<h_{2}^{F}<\tilde{h}_{1}^{F}<h_{1}^{F}$, and the equation
	$$\tilde{C}_{k-1}(h-\tau)^{k-1}+\tilde{C}_k(h-\tau)^k=0$$
	has a zero $\tilde{h}_{1}^{G}$, with $h_{1}^{G}<\tilde{h}_{1}^{G}<h_{2}^{G}$. Continuing with this reasoning,  there are  $\tilde{C}_0,\tilde{D}_1,\dots,\tilde{D}_{2k-1},\tilde{C}_{k}$, such that 
	$$\tilde{C}_0\tilde{D}_{1}<0,\,\,\tilde{D}_1\tilde{C}_{2}<0,\dots,\,\,\tilde{D}_{2k-1}\tilde{C}_{k}<0,\quad |\tilde{C}_0|\ll|\tilde{D}_{1}|\ll\dots\ll|\tilde{D}_{2k-1}|\ll\tilde{C}_{k},$$
	and the equations \eqref{eq:m1} and \eqref{eq:m2} have $2k$ and $k$ real zeros near $\tau$, respectively. Moreover, if $\tilde{C}_k(\tilde{\delta})\ne0$ and  $\tilde{C}_j(\tilde{\delta})=\tilde{D}_{2j+1}(\tilde{\delta})=0$, $j=0,\dots,k-1$, by Rolle’s theorem, for all $\delta$ near $\tilde{\delta}$ we can choose the $\tilde{C}_i$ and $h_{j}^{G}$, with $i=0,\dots,k$ and $j=1,\dots,k$, such that equation \eqref{eq:m2} have exactly $k$ real simple zero near $h=\tau$.
\end{proof}

%%%%%%%%%%%%%%%%%%%%%%%%%%%%%%%%%%%%%%%%%%%%%%%%%%%%%%%%%%%%%%%%%%%%%%%%%%%%%%%%%%%%%%%%%%

\begin{proof}[Proof of Theorem \ref{the:01}]
	In order to prove the Theorem \ref{the:01}, let us consider the Melnikov functions associated to system \eqref{eq:01} with $f(h)$ and $g(h)$ given by \eqref{eq:02} and \eqref{eq:03}, respectively, and $H(x,y)$ with $\alpha_{\scriptscriptstyle L}=a_{\scriptscriptstyle L}$, $\alpha_{\scriptscriptstyle R}=-a_{\scriptscriptstyle R}$, $b_{\scriptscriptstyle C}=1$, $c_{\scriptscriptstyle C}=-1$, $a_{\scriptscriptstyle C}=\alpha_{\scriptscriptstyle C}=0$. 	
	Our study will be concentrated in a neighborhood of the orbit $L_h$ with $h=2\sqrt{\beta_{\scriptscriptstyle C}}$ (i.e. the orbit $L^{1}_{0}$), since to estimate the zeros of the Melnikov functions we consider their expansions at the point corresponding to this orbit. More precisely, consider the Melnikov functions $M_{0}(h)$ and $M_{1}(h)$ given by the Theorem \ref{theo:melfunc}. Note that, the function $M_{0}(h)$ is not differentiable at $h=2\sqrt{\beta_{\scriptscriptstyle C}}$. Thus, for obtain the expansion of this function at $h=2\sqrt{\beta_{\scriptscriptstyle C}}$ we need doing a change of variables $h=u^2+2\sqrt{\beta_{\scriptscriptstyle C}}$ and, after compute its Taylor's expansion at $u=0$, we return to the original variable doing the change $u=\sqrt{h-2\sqrt{\beta_{\scriptscriptstyle C}}}$. Then the expansion of function $M_{0}(h)$ and $M_{1}(h)$ at $h=2\sqrt{\beta_{\scriptscriptstyle C}}$ are given by
	\begin{eqnarray}\label{eq:m0h}
		&&M_{0}(h)=C_{00} + D_{1}\big(h-2\sqrt{\beta_{\scriptscriptstyle C}}\big)^{\frac{1}{2}} + C_{10}\big(h-2\sqrt{\beta_{\scriptscriptstyle C}}\big) + D_{3}\big(h-2\sqrt{\beta_{\scriptscriptstyle C}}\big)^{\frac{3}{2}} \nonumber\\
		&&\quad\quad\quad \quad   + \, C_{20}\big(h-2\sqrt{\beta_{\scriptscriptstyle C}}\big)^2 +  \mathcal{O}\big(\big(h-2\sqrt{\beta_{\scriptscriptstyle C}}\big)^3\big),\quad 0<h-2\sqrt{\beta_{\scriptscriptstyle C}} \ll1,
	\end{eqnarray}
	and
	\begin{eqnarray}\label{eq:m1h}
		&&M_{1}(h)=C_{01} + C_{11}\big(h-2\sqrt{\beta_{\scriptscriptstyle C}}\big) + C_{21}\big(h-2\sqrt{\beta_{\scriptscriptstyle C}}\big)^2 + \mathcal{O}\big(\big(h-2\sqrt{\beta_{\scriptscriptstyle C}}\big)^3\big),\nonumber\\
		&& \quad\quad\quad \quad0<2\sqrt{\beta_{\scriptscriptstyle C}}-h \ll1,
	\end{eqnarray}
	where
	\begin{equation*}
		\begin{aligned}
			& C_{00}  = \frac{1}{2\omega_{\scriptscriptstyle R}}\bigg(4\sqrt{\beta_{\scriptscriptstyle C}} ((2 p_{00} + 2 p_{10} - b_{\scriptscriptstyle R} (2 u_{00} + u_{10} - v_{01} + (u_{10} + v_{01}) \beta_{\scriptscriptstyle C})) \omega_{\scriptscriptstyle C} + b_{\scriptscriptstyle R} (p_{10}  \\
			& \quad\quad\,\,\,\, + q_{01}) \tau_{\scriptscriptstyle R}) + 2 b_{\scriptscriptstyle R} (u_{10} + v_{01}) (1 + \beta_{\scriptscriptstyle C})^2 \omega_{\scriptscriptstyle R} \arccos\bigg(\frac{\beta_{\scriptscriptstyle C}-1}{\beta_{\scriptscriptstyle C}+1}\bigg) + 
			b_{\scriptscriptstyle R} (p_{10} + q_{01}) (4 \beta_{\scriptscriptstyle C} \\
			& \quad\quad \,\,\,\, - \tau_{\scriptscriptstyle R}^2) \log\bigg(-\frac{2\sqrt{\beta_{\scriptscriptstyle C}}+\tau_{\scriptscriptstyle R}}{2\sqrt{\beta_{\scriptscriptstyle C}}-\tau_{\scriptscriptstyle R}}\bigg)\bigg), \\
			& D_{1} = \frac{4 b_{\scriptscriptstyle R} \sqrt[4]{\beta_{\scriptscriptstyle C}}}{b_{\scriptscriptstyle L}}\bigg(r_{10} - r_{00} + b_{\scriptscriptstyle L} (u_{00} - u_{10})\bigg), \\
			& C_{10}  = 2\bigg(p_{00} + p_{10} - b_{\scriptscriptstyle R} (u_{00} + u_{10})+\frac{b_{\scriptscriptstyle R}\sqrt{\beta_{\scriptscriptstyle C}}}{\omega_{\scriptscriptstyle R}}\bigg(2 (u_{10} + v_{01}) \omega_{\scriptscriptstyle R} \arccos\bigg(\frac{\beta_{\scriptscriptstyle C}-1}{\beta_{\scriptscriptstyle C}+1}\bigg) \\
			&\quad\quad\,\,\,\,  + (p_{10} + q_{01}) \log\bigg(-\frac{2\sqrt{\beta_{\scriptscriptstyle C}}+\tau_{\scriptscriptstyle R}}{2\sqrt{\beta_{\scriptscriptstyle C}}-\tau_{\scriptscriptstyle R}}\bigg)\bigg)\bigg), \\
		\end{aligned}
	\end{equation*}
	\begin{equation*}
		\begin{aligned}	
			& D_{3}  = \frac{1}{6 b_{\scriptscriptstyle L} \sqrt[4]{\beta_{\scriptscriptstyle C}} (1 + \beta_{\scriptscriptstyle C}) \omega_{\scriptscriptstyle L} \tau_{\scriptscriptstyle L}}\bigg( -3 b_{\scriptscriptstyle R} (r_{00} - r_{10}) (1 + \beta_{\scriptscriptstyle C}) \omega_{\scriptscriptstyle L} \tau_{\scriptscriptstyle L} + b_{\scriptscriptstyle L} b_{\scriptscriptstyle R} (32 (r_{10} + s_{01}) \beta_{\scriptscriptstyle C} \quad\quad\quad\quad\quad  \\
			& \quad\quad\,\,\,\, (1 + \beta_{\scriptscriptstyle C}) + (-3 u_{10} - 35 u_{10} \beta_{\scriptscriptstyle C} - 32 v_{01} \beta_{\scriptscriptstyle C} + 3 u_{00} (1 + \beta_{\scriptscriptstyle C})) \omega_{\scriptscriptstyle L} \tau_{\scriptscriptstyle L}) \bigg), \\
			& C_{20}  = b_{\scriptscriptstyle R} (u_{10} + v_{01}) \arccos\bigg(\frac{\beta_{\scriptscriptstyle C}-1}{\beta_{\scriptscriptstyle C}+1}\bigg) + \frac{b_{\scriptscriptstyle R}}{2\omega_{\scriptscriptstyle R}}\bigg(\frac{4\sqrt{\beta_{\scriptscriptstyle C}}}{(1 + \beta_{\scriptscriptstyle C})^2 (4 \beta_{\scriptscriptstyle C} - \tau_{\scriptscriptstyle R}^2)}\bigg(4 (u_{10} + v_{01})  \\
			& \quad\quad\,\,\,\, (\beta_{\scriptscriptstyle C}-1 ) \beta_{\scriptscriptstyle C} \omega_{\scriptscriptstyle R} - (p_{10} + q_{01}) (1 + \beta_{\scriptscriptstyle C})^2 \tau_{\scriptscriptstyle R} - (u_{10} + v_{01}) (\beta_{\scriptscriptstyle C}-1) \omega_{\scriptscriptstyle R} \tau_{\scriptscriptstyle R}^2\bigg) \\
			& \quad\quad\,\,\,\, + (p_{10} + q_{01})\log\bigg(-\frac{2\sqrt{\beta_{\scriptscriptstyle C}}+\tau_{\scriptscriptstyle R}}{2\sqrt{\beta_{\scriptscriptstyle C}}-\tau_{\scriptscriptstyle R}}\bigg) \bigg), 
		\end{aligned}
	\end{equation*}	
	\begin{equation*}
		\begin{aligned}	
			& C_{01}  = \frac{1}{2}\bigg(8 (p_{00} + p_{10}) \sqrt{\beta_{\scriptscriptstyle C}} - 4 b_{\scriptscriptstyle R} \sqrt{\beta_{\scriptscriptstyle C}} (2 u_{00} + u_{10} - v_{01} + (u_{10} + v_{01}) \beta_{\scriptscriptstyle C}) + \frac{4b_{\scriptscriptstyle R}\sqrt{\beta_{\scriptscriptstyle C}}\tau_{\scriptscriptstyle R}}{\omega_{\scriptscriptstyle R}}\quad\quad\\
			&\quad\quad\,\,\,\, \bigg(p_{10} + q_{01}\bigg)+ b_{\scriptscriptstyle R} (u_{10} + v_{01}) (1 + \beta_{\scriptscriptstyle C})^2 \bigg( 2\pi - \arccos\bigg(\frac{1 - 6 \beta_{\scriptscriptstyle C} + \beta_{\scriptscriptstyle C}^2}{(1 + \beta_{\scriptscriptstyle C})^2}\bigg)\bigg) + \frac{b_{\scriptscriptstyle R}}{\omega_{\scriptscriptstyle R}} \\
			& \quad\quad\,\,\,\, (p_{10} + q_{01}) (2 \sqrt{\beta_{\scriptscriptstyle C}} - \tau_{\scriptscriptstyle R})(2 \sqrt{\beta_{\scriptscriptstyle C}} + \tau_{\scriptscriptstyle R} )\log\bigg(-\frac{2\sqrt{\beta_{\scriptscriptstyle C}}+\tau_{\scriptscriptstyle R}}{2\sqrt{\beta_{\scriptscriptstyle C}}-\tau_{\scriptscriptstyle R}}\bigg) \bigg),\\
			& C_{11}  = 2 (p_{00} + p_{10}) - b_{\scriptscriptstyle R} (2 u_{00} + u_{10} - v_{01} + (u_{10} + v_{01}) \beta_{\scriptscriptstyle C}) + \frac{b_{\scriptscriptstyle R}\tau_{\scriptscriptstyle R}}{\omega_{\scriptscriptstyle R}}\bigg(p_{10} + q_{01}\bigg)\\
			& \quad\quad\,\,\,\, +b_{\scriptscriptstyle R} (u_{10} + v_{01})\bigg( -\sqrt{(\beta_{\scriptscriptstyle L}-1)^2} + 2\sqrt{\beta_{\scriptscriptstyle C}}\bigg(2\pi - \arccos\bigg(\frac{1 - 6 \beta_{\scriptscriptstyle C} + \beta_{\scriptscriptstyle C}^2}{(1 + \beta_{\scriptscriptstyle C})^2}\bigg)\bigg) \bigg)  \\
			& \quad\quad\,\,\,\, -\frac{b_{\scriptscriptstyle R}}{\omega_{\scriptscriptstyle R}}\bigg( p_{10} + q_{01} \bigg)\bigg( \tau_{\scriptscriptstyle R} - 2\sqrt{\beta_{\scriptscriptstyle C}}\log\bigg(-\frac{2\sqrt{\beta_{\scriptscriptstyle C}}+\tau_{\scriptscriptstyle R}}{2\sqrt{\beta_{\scriptscriptstyle C}}-\tau_{\scriptscriptstyle R}}\bigg)\bigg),
		\end{aligned}
	\end{equation*}
	\begin{equation*}
		\begin{aligned}
			& 	C_{21}  = b_{\scriptscriptstyle R}\pi (u_{10} + v_{01}) + 2 b_{\scriptscriptstyle R}\sqrt{\beta_{\scriptscriptstyle C}} \bigg( \frac{\tau_{\scriptscriptstyle R}}{\omega_{\scriptscriptstyle R}(\tau_{\scriptscriptstyle R}^2-4\beta_{\scriptscriptstyle C})}(p_{10} + q_{01}) - \frac{\sqrt{(\beta_{\scriptscriptstyle C}-1)^2}}{(\beta_{\scriptscriptstyle C}^2+1)}(u_{10} + v_{01}) \bigg) \quad\quad\quad\quad\quad\quad\quad \\
			& \quad\quad\,\,\,\,- \frac{b_{\scriptscriptstyle R}}{2}(u_{10} + v_{01})\arccos\bigg(\frac{1 - 6 \beta_{\scriptscriptstyle C} + \beta_{\scriptscriptstyle C}^2}{(1 + \beta_{\scriptscriptstyle C})^2}\bigg) + \frac{b_{\scriptscriptstyle R}}{2\omega_{\scriptscriptstyle R}}(p_{10} + q_{01})\log\bigg(-\frac{2\sqrt{\beta_{\scriptscriptstyle C}}+\tau_{\scriptscriptstyle R}}{2\sqrt{\beta_{\scriptscriptstyle C}}-\tau_{\scriptscriptstyle R}}\bigg).
		\end{aligned}
	\end{equation*}			
	Further, we have that 	
	\begin{equation}\label{eq:cs}
		\begin{aligned}
			C_{00} - C_{01} & = -\frac{b_{\scriptscriptstyle R}}{2}\Big(u_{10} + v_{01}\Big)(1 + \beta_{\scriptscriptstyle C})^2 \,\Phi(\beta_{\scriptscriptstyle C}), \\
			C_{10} - C_{11} & = \left\{\begin{array}{ll}\vspace{0.2cm}
				-b_{\scriptscriptstyle R}(u_{10} + v_{01})\sqrt{\beta_{\scriptscriptstyle C}} \,\Phi(\beta_{\scriptscriptstyle C}), \quad\text{if}\quad 0<\beta_{\scriptscriptstyle C}\leq 1, \\ \vspace{0.2cm}
				2b_{\scriptscriptstyle R}(u_{10} + v_{01})(\beta_{\scriptscriptstyle C}-1-\sqrt{\beta_{\scriptscriptstyle C}}\Phi(\beta_{\scriptscriptstyle C})), \quad\text{if}\quad \beta_{\scriptscriptstyle C}> 1,
			\end{array}
			\right. \\
			C_{20} - C_{21} & = \left\{\begin{array}{ll}\vspace{0.2cm}
				-\dfrac{b_{\scriptscriptstyle R}}{2}\Big(u_{10} + v_{01}\Big)\,\Phi(\beta_{\scriptscriptstyle C}), \quad\text{if}\quad 0<\beta_{\scriptscriptstyle C}\leq 1, \\ \vspace{0.2cm}
				\dfrac{b_{\scriptscriptstyle R}}{2(\beta_{\scriptscriptstyle C}+1)^2}\Big(u_{10} + v_{01}\Big)\Big(8(\beta_{\scriptscriptstyle C}-1)\sqrt{\beta_{\scriptscriptstyle C}} - (\beta_{\scriptscriptstyle C}+1)^2\Phi(\beta_{\scriptscriptstyle C})\Big), \\
				\text{if}\quad \beta_{\scriptscriptstyle C}> 1,
			\end{array}
			\right.
		\end{aligned}
	\end{equation}	
	with
	\begin{equation*}
		\Phi(\beta_{\scriptscriptstyle C}) = 2\pi - 2\arccos\bigg(\frac{\beta_{\scriptscriptstyle C}-1}{\beta_{\scriptscriptstyle C}+1}\bigg) - \arccos\bigg(\frac{1 - 6 \beta_{\scriptscriptstyle C} + \beta_{\scriptscriptstyle C}^2}{(1 + \beta_{\scriptscriptstyle C})^2}\bigg). 
	\end{equation*}
	In this way, we will divide the proof of Theorem \ref{the:01} in two cases. The first one is when $0<\beta_{\scriptscriptstyle C}\leq1$ (i.e. the central subsystem has a real or boundary center) and the second one is when $\beta_{\scriptscriptstyle C}>1$ (i.e. the central subsystem has a virtual center). 
	
	\medskip
	
	{\it Case $0<\beta_{\scriptscriptstyle C}\leq1$}: In this case we have that  $\Phi(1)=0$ and, after some compute,  is possible to show that $\Phi'(\beta_{\scriptscriptstyle C})=0$. Therefore, $\Phi(\beta_{\scriptscriptstyle C})\equiv0$ for  $\beta_{\scriptscriptstyle C}\in(0,1]$ and,  by equations \eqref{eq:cs},  $C_{00}=C_{01}$, $C_{10}=C_{11}$ and $C_{20}=C_{21}$, i.e we can rewrite the equation \eqref{eq:m1h} as
	$$M_{1}(h)=C_{00} + C_{10}\big(h-2\sqrt{\beta_{\scriptscriptstyle C}}\big) + C_{20}\big(h-2\sqrt{\beta_{\scriptscriptstyle C}}\big)^2 + \mathcal{O}\big(\big(h-2\sqrt{\beta_{\scriptscriptstyle C}}\big)^3\big)$$
	Moreover, when 
	\begin{equation*}
		\begin{aligned}
	& p_{00} = q_{01} + b_{\scriptscriptstyle R} (u_{00} - v_{01}),\quad r_{10} = -s_{01},\quad u_{10} = -v_{01},\\
	& \quad\quad r_{00} = -s_{01} + b_{\scriptscriptstyle L} (u_{00} + v_{01})\quad\text{and}\quad p_{10} = -q_{01},
		\end{aligned}
	\end{equation*}
	we have that 
	$$C_{00}=D_{1}=C_{10}=D_{3}=C_{20}=0\quad \text{with}\quad\text{rank}\frac{\partial(C_{00},D_{1},C_{10},D_{3},C_{20})}{\partial(p_{00},r_{10},u_{10},r_{00},p_{10})}=5.$$
	This implies, by Proposition \ref{the:coef}, that $C_{00}$, $D_1$, $C_{10}$, $D_{3}$ and $C_{20}$ can be taken as free coefficients, satisfying
	$$C_{00} D_1<0,\quad D_1 C_{10}<0,\quad C_{10} D_3<0,\quad D_3 C_{20}<0$$ 
	and
	$$ |C_{00}|\ll|D_1|\ll|C_{10}|\ll|D_3|\ll|C_{20}|,$$
	such that the functions \eqref{eq:m0h} and \eqref{eq:m1h} have at least four and two positive zeros near $h=2\sqrt{\beta_{\scriptscriptstyle C}}$, respectively.
	
	Now, when
	\begin{equation*}
		\begin{aligned}
			& p_{00} = \frac{1}{2 \omega_{\scriptscriptstyle R}\lambda_1}\bigg( 4 \sqrt{\beta_{\scriptscriptstyle C}} ((p_{10} + b_{\scriptscriptstyle R} (v_{01}-u_{00} )) (\beta_{\scriptscriptstyle C}-1) \omega_{\scriptscriptstyle R} - b_{\scriptscriptstyle R} (p_{10} + q_{01}) \tau_{\scriptscriptstyle R}) - 2 ((p_{10} + b_{\scriptscriptstyle R}  \\
			& \quad \quad \,\,\,\, (v_{01}-u_{00} ))(1 + (\beta_{\scriptscriptstyle C}-6) \beta_{\scriptscriptstyle C}) \omega_{\scriptscriptstyle R} - 4 b_{\scriptscriptstyle R} (p_{10} + 
			q_{01}) \beta_{\scriptscriptstyle C} \tau_{\scriptscriptstyle R}) \arccos\bigg(\frac{\beta_{\scriptscriptstyle C}-1}{\beta_{\scriptscriptstyle C}+1}\bigg) - b_{\scriptscriptstyle R}  \\
			& \quad \quad \,\,\,\, (p_{10} + q_{01}) \bigg(-4 \beta_{\scriptscriptstyle C}^2 - \tau_{\scriptscriptstyle R}^2 +2 \sqrt{\beta_{\scriptscriptstyle C}} ((\beta_{\scriptscriptstyle C}-1)^2 + \tau_{\scriptscriptstyle R}^2)
			\arccos\bigg(\frac{\beta_{\scriptscriptstyle C}-1}{\beta_{\scriptscriptstyle C}+1}\bigg)\bigg) ,\\
			& \quad \quad \,\,\,\,\log\bigg(\frac{2\sqrt{\beta_{\scriptscriptstyle C}}+\tau_{\scriptscriptstyle R}}{\tau_{\scriptscriptstyle R}-2\sqrt{\beta_{\scriptscriptstyle C}}}\bigg) \bigg),\\
			& r_{10} = -\frac{1}{2 \omega_{\scriptscriptstyle R}\lambda_1(\beta_{\scriptscriptstyle C}+1)}\bigg( 4 \sqrt{\beta_{\scriptscriptstyle C}} (s_{01} (\beta_{\scriptscriptstyle C}^2-1) \omega_{\scriptscriptstyle R} - (p_{10} + 
			q_{01}) \omega_{\scriptscriptstyle L} \tau_{\scriptscriptstyle L} \tau_{\scriptscriptstyle R}) - 2 s_{01} (1 + \beta_{\scriptscriptstyle C})(1 + (\beta_{\scriptscriptstyle C}\\
			&  \quad \quad \,\,\,\,  -6) \beta_{\scriptscriptstyle C})\omega_{\scriptscriptstyle R}
			\arccos\bigg(\frac{\beta_{\scriptscriptstyle C}-1}{\beta_{\scriptscriptstyle C}+1}\bigg) + (p_{10} + 
			q_{01}) \omega_{\scriptscriptstyle L} \tau_{\scriptscriptstyle L} (4 \beta_{\scriptscriptstyle C} + \tau_{\scriptscriptstyle R}^2) \log\bigg(\frac{2\sqrt{\beta_{\scriptscriptstyle C}}+\tau_{\scriptscriptstyle R}}{\tau_{\scriptscriptstyle R}-2\sqrt{\beta_{\scriptscriptstyle C}}}\bigg) \bigg),
		\end{aligned}
	\end{equation*}	
	\begin{equation*}
		\begin{aligned}
			& u_{10} = -\frac{1}{2 \omega_{\scriptscriptstyle R}\lambda_1} \bigg( 4 \sqrt{\beta_{\scriptscriptstyle C}} (v_{01} (\beta_{\scriptscriptstyle C}-1) \omega_{\scriptscriptstyle R} - (p_{10} + 
			q_{01}) \tau_{\scriptscriptstyle R}) - 
			2 v_{01} (1 + (\beta_{\scriptscriptstyle C}-6) \beta_{\scriptscriptstyle C}) \omega_{\scriptscriptstyle R} \\
			& \quad \quad \,\,\,\, \arccos\bigg(\frac{\beta_{\scriptscriptstyle C}-1}{\beta_{\scriptscriptstyle C}+1}\bigg) + (p_{10} + 
			q_{01}) (4 \beta_{\scriptscriptstyle C} + \tau_{\scriptscriptstyle R}^2) \log\bigg(\frac{2\sqrt{\beta_{\scriptscriptstyle C}}+\tau_{\scriptscriptstyle R}}{\tau_{\scriptscriptstyle R}-2\sqrt{\beta_{\scriptscriptstyle C}}}\bigg) \bigg),\\
			& r_{00} = \frac{1}{2 \omega_{\scriptscriptstyle R}\lambda_1(\beta_{\scriptscriptstyle C}+1)}\bigg( 4 \sqrt{\beta_{\scriptscriptstyle C}} (-(s_{01} - b_{\scriptscriptstyle L} (u_{00} + v_{01})) ( \beta_{\scriptscriptstyle C}^2-1) \omega_{\scriptscriptstyle R} - (p_{10} + q_{01}) (b_{\scriptscriptstyle L} + b_{\scriptscriptstyle L} \beta_{\scriptscriptstyle C}  \\
			& \quad \quad \,\,\,\, - \omega_{\scriptscriptstyle L} \tau_{\scriptscriptstyle L}) \tau_{\scriptscriptstyle R})+ 2 (s_{01} - b_{\scriptscriptstyle L} (u_{00} + v_{01})) (1 + \beta_{\scriptscriptstyle C}) (1 + ( \beta_{\scriptscriptstyle C}-6) \beta_{\scriptscriptstyle C}) \omega_{\scriptscriptstyle R} \arccos\bigg(\frac{\beta_{\scriptscriptstyle C}-1}{\beta_{\scriptscriptstyle C}+1}\bigg)  \\
			& \quad \quad \,\,\,\, + (p_{10} + q_{01})(b_{\scriptscriptstyle L} + 
			b_{\scriptscriptstyle L} \beta_{\scriptscriptstyle C} - \omega_{\scriptscriptstyle L} \tau_{\scriptscriptstyle L}) (4 \beta_{\scriptscriptstyle C} + \tau_{\scriptscriptstyle R}^2) \log\bigg(\frac{2\sqrt{\beta_{\scriptscriptstyle C}}+\tau_{\scriptscriptstyle R}}{\tau_{\scriptscriptstyle R}-2\sqrt{\beta_{\scriptscriptstyle C}}}\bigg) \bigg),
		\end{aligned}
	\end{equation*}	
	with $$\lambda_1= 2  (\beta_{\scriptscriptstyle C}-1) \sqrt{\beta_{\scriptscriptstyle C}} - (1 + (\beta_{\scriptscriptstyle C}-6) \beta_{\scriptscriptstyle C}) \arccos\bigg(\frac{\beta_{\scriptscriptstyle C}-1}{\beta_{\scriptscriptstyle C}+1}\bigg),$$
	we have that $C_{00}=D_{1}=C_{10}=D_{3}=0$, 
	$$C_{20}=-\frac{b_{\scriptscriptstyle C}(p_{10} + q_{01}) \lambda_2}{2 \omega_{\scriptscriptstyle R}(1 + \beta_{\scriptscriptstyle C})^2 (4 \beta_{\scriptscriptstyle C} - \tau_{\scriptscriptstyle R}^2) \lambda_1}\quad\text{and}\quad\text{rank}\frac{\partial(C_{00},D_{1},C_{10},D_{3},C_{20})}{\partial(p_{00},r_{10},u_{10},r_{00},p_{10})}=5,$$
	with
	\begin{equation*}
		\begin{aligned}
			&\lambda_2 = 8 (\beta_{\scriptscriptstyle C}-1) \beta_{\scriptscriptstyle C} \tau_{\scriptscriptstyle R} ((\beta_{\scriptscriptstyle C}-1)^2 + \tau_{\scriptscriptstyle R}^2) - 
			4 \sqrt{\beta_{\scriptscriptstyle C}} (1 + \beta_{\scriptscriptstyle C})^2 (\beta_{\scriptscriptstyle C}-1 - \tau_{\scriptscriptstyle R}) (\beta_{\scriptscriptstyle C} + \tau_{\scriptscriptstyle R}-1)\tau_{\scriptscriptstyle R}\\
			& \quad \quad \,\,\,\, \arccos\bigg(\frac{\beta_{\scriptscriptstyle C}-1}{\beta_{\scriptscriptstyle C}+1}\bigg) + (4 \beta_{\scriptscriptstyle C} - \tau_{\scriptscriptstyle R}^2) \bigg(-2 (\
			\beta_{\scriptscriptstyle C}-1 ) \sqrt{\beta_{\scriptscriptstyle C}} ((\beta_{\scriptscriptstyle C}-1 )^2 - \tau_{\scriptscriptstyle R}^2) 
			+ (1 + \beta_{\scriptscriptstyle C})^2 \\
			& \quad \quad \,\,\,\, ((\beta_{\scriptscriptstyle C}-1)^2 + \tau_{\scriptscriptstyle R}^2) 
			\arccos\bigg(\frac{\beta_{\scriptscriptstyle C}-1}{\beta_{\scriptscriptstyle C}+1}\bigg) \log\bigg(\frac{2\sqrt{\beta_{\scriptscriptstyle C}}+\tau_{\scriptscriptstyle R}}{\tau_{\scriptscriptstyle R}-2\sqrt{\beta_{\scriptscriptstyle C}}}\bigg)\bigg).
		\end{aligned}
	\end{equation*}
	It is possible to show that the equation $\lambda_1=0$ has a unique zero in the interval $(0,1]$. Using the Newton's method we obtain that the zero is $\beta_{\scriptscriptstyle C}\approx 0.23031022687$. Moreover, there are parameters values for system $\eqref{eq:01}$ such that $\lambda_2\ne0$. For example, if $a_{\scriptscriptstyle L}=b_{\scriptscriptstyle L}=b_{\scriptscriptstyle R}=c_{\scriptscriptstyle R}=1$, $c_{\scriptscriptstyle L}=a_{\scriptscriptstyle R}=0$, $\beta_{\scriptscriptstyle R}=-4$, $\beta_{\scriptscriptstyle L} \geq \sqrt{9-4\beta_{\scriptscriptstyle C}}$ and $\beta_{\scriptscriptstyle C} \in\{1/2,1\}$, we have that the left subsystem from $\eqref{eq:01}|_{\epsilon=0}$ has a saddle at the point $(-1-\beta_{\scriptscriptstyle L},\beta_{\scriptscriptstyle L})$, the right subsystem from $\eqref{eq:01}|_{\epsilon=0}$ has a saddle at the point $(4,0)$, $\lambda_2\approx 223.526$ when $\beta_{\scriptscriptstyle C}=1$ and $\lambda_2\approx 23.0517$ when $\beta_{\scriptscriptstyle C}=1/2$. Hence, for  $p_{10} \ne -q_{01}$, we have that $C_{20}\ne 0$. This implies, by Proposition \ref{the:coef}, that we can take $C_{00}$, $D_1$, $C_{10}$, $D_{3}$ and $C_{20}$ as free coefficients, 
	such that the function \eqref{eq:m0h} has at least four positive zeros near $h=2\sqrt{\beta_{\scriptscriptstyle C}}$ and the function \eqref{eq:m1h} has exactly two positive zeros near $h=2\sqrt{\beta_{\scriptscriptstyle C}}$. Therefore, the number of limit cycles from system \eqref{eq:01} that can bifurcate of the periodic annulus near $h=2\sqrt{\beta_{\scriptscriptstyle C}}$ when  $0<\beta_{\scriptscriptstyle C}\leq1$ is at least six.	
	
	\medskip
	
	{\it Case $\beta_{\scriptscriptstyle C}>1$}: In this case, 
	%in order to simplify the calculations, we will assume that $\beta_{\scriptscriptstyle C}=2$. Thus, the equations in  \eqref{eq:cs} become
	%\begin{equation*}
	%	\begin{aligned}
	%		C_{00} - C_{01} & = -\frac{9\,b_{\scriptscriptstyle R}}{2}\Big(u_{10} + v_{01}\Big)\bigg( 2\pi-\arccos\bigg(-\frac{7}{9}\bigg) -2\arccos\bigg(\frac{1}{3}\bigg) \bigg), \\
	%		C_{10} - C_{11} & = -2b_{\scriptscriptstyle R}(u_{10} + v_{01})\bigg( \sqrt{2}\bigg(2-\arccos\bigg(-\frac{7}{9}\bigg)-2\arccos\bigg(\frac{1}{3}\bigg)\bigg)-1\bigg), \\
	%		C_{20} - C_{21} & = \frac{b_{\scriptscriptstyle R}}{18}\Big(u_{10} + v_{01}\Big)\bigg( 8\sqrt{2}-9\bigg(2\pi-\arccos\bigg(-\frac{7}{9}\bigg)-2\arccos\bigg(\frac{1}{3}\bigg)\bigg)\bigg).
	%	\end{aligned}
	%\end{equation*}	
	%Therefore, 
	if we take $v_{01}=-u_{10}$ in \eqref{eq:cs}, we obtain $C_{00}=C_{01}$, $C_{10}=C_{11}$ and $C_{20}=C_{21}$.
	Moreover, when
	\begin{equation*}
		\begin{aligned}
			& p_{00} = q_{01} + b_{\scriptscriptstyle R} (u_{00} + u_{10}),\quad r_{10} = r_{00} + b_{\scriptscriptstyle L} (u_{10} -u_{00} ), \\
			& \quad\quad p_{10} = -q_{01}\quad\text{and}\quad u_{10}\ne-\frac{1}{b_{\scriptscriptstyle L}}(r_{00} + s_{01}) + u_{00},
		\end{aligned}
	\end{equation*}
	we have that 
	$$C_{00}=D_{1}=C_{10}=0,\quad D_{3}\ne0\quad \text{with}\quad\text{rank}\frac{\partial(C_{00},D_{1},C_{10},D_{3})}{\partial(p_{00},r_{10},p_{10},u_{10})}=4.$$
	This implies, by Proposition \ref{the:coef}, that we can take $C_{00}$, $D_1$, $C_{10}$ and $D_{3}$ as free coefficients, such that the functions \eqref{eq:m0h} and \eqref{eq:m1h} have at least three and one positive zeros near $h=2\sqrt{\beta_{\scriptscriptstyle C}}$, respectively. Therefore, the number of limit cycles from system \eqref{eq:01} that can bifurcate of the periodic annulus near $h=2\sqrt{\beta_{\scriptscriptstyle C}}$ is at least four.
\end{proof}

%%%%%%%%%%%%%%%%%%%%%%%%%%%%%%%%%%%%%%%%%%%%%%%%%%%%%%%%%%%%%%%%%%%%%%%%%%%%%%%%%%%%%%%%%%%%

\section{Acknowledgments}

The first author is partially supported by S\~ao Paulo Research Foundation (FAPESP) grants 19/10269-3 and 18/19726-5.
The second author is supported by CAPES grant 88882.434343/2019-01.

%\addcontentsline{toc}{chapter}{Bibliografia}
%\bibliographystyle{siam}
%\bibliography{cent_discontinuous.bib}
\addcontentsline{toc}{chapter}{Bibliografia}

\end{document}